\documentclass[a4paper,11pt]{amsart}
\usepackage{amscd,amsmath,amsfonts,amssymb,amsthm,enumerate,array,graphicx,stmaryrd}
\usepackage[margin=1.25in]{geometry}

\def\CC{\mathbb{C}}
\def\RR{\mathbb{R}}
\def\NN{\mathbb{N}}
\def\FF{\mathbb{F}}
\def\ZZ{\mathbb{Z}}

\theoremstyle{plain}
\newtheorem{theorem}{\bf Theorem}[section]
\newtheorem{lemma}[theorem]{\bf Lemma}
\theoremstyle{remark}
\newtheorem{definition}[theorem]{\bf Definition}
\newtheorem{problem}[theorem]{\bf Problem}

\newtheorem{example}[theorem]{\bf Example}
\newtheorem{remark}[theorem]{\bf Remark}
\newtheorem{proposition}[theorem]{\bf Proposition}
\newtheorem{corollary}[theorem]{\bf Corollary}
\numberwithin{equation}{section}

\newcommand{\Hilb}{\mathsf{Hilb}}
\newcommand{\Sem}{\mathsf{Sem}}

\newcommand{\Cat}{\mathsf{Cat}}
\DeclareMathOperator{\ran}{ran}
\DeclareMathOperator{\rk}{rank}

\title{Coarse comparison of Bessel families}
\author{Nizar El Idrissi}
\address{Department of Mathematics, Ibn Tofail University, Kenitra, Morocco}
\email{nizar.elidrissi@uit.ac.ma}
\subjclass[2020]{42C15, 47B06, 46M15, 54E35}
\keywords{Bessel family, frame operator, spectral measure, spectral counting function, singular numbers, semi-Fredholm operators, Fredholm operators, comparison categories, Banach--Mazur distance}

\begin{document}
\begin{abstract}
We classify Bessel families in separable Hilbert spaces under coarse equivalence relations: $f$ and $\tilde f$ are equivalent if the seminorms $\|T_f\,\cdot\|$ and $\|T_{\tilde f}\,\cdot\|$ of their analysis operators are mutually dominated, up to constants, after pullback along operators $w,\tilde w$ of a prescribed class $\mathcal O$. In contrast with orbit relations such as unitary or similarity equivalence, these relations ignore redundancy and the geometry of the index set, and compare frames, nonframes and compact families on the same footing. They are level $-1$ of the comparison categories of the companion article, and we compute the levels $0$ and $k\ge1$: they are trivial for bounded, injective, bounded below, isometric and upper semi-Fredholm comparison operators, and for invertible or unitary ones level $1$ is a stabilizer-type group of the family. For bounded $\mathcal O$ the classes are given by the singular numbers up to constants; injective operators add the kernel dimension, bounded below operators or isometries add upper semi-Fredholmness, and invertible, unitary or surjective operators give the kernel dimension and the dyadic multiplicity sequence of the spectrum up to bounded displacement on the logarithmic scale, which splits each remaining class into continuum many. Upper semi-Fredholm operators give only three noncompact classes, whereas Fredholm operators give continuum many, so that the Fredholm relation is strictly finer. For each relation we determine exactly which families are equivalent to an orthonormal basis, and we compute an operator Banach--Mazur distance, $\log(\sup_ns_n/\tilde s_n\cdot\sup_n\tilde s_n/s_n)$. As applications, we obtain a relative Paley--Wiener stability theorem for Bessel families and the invariance of Schatten membership and of $\int\|f\|^2$ under coarse equivalence.
\end{abstract}
\maketitle

\section{Introduction}
\label{section-introduction}

In functional analysis, equivalence relations are most often orbit relations: two functions or families are identified when their inputs and outputs can be interchanged by a group action. Unitary equivalence of frames \cite{Christensen2016,HanLarson2000}, its extension to $\mathrm{GL}(H)$ \cite{Balan1999}, and the generalized equivalence under $\mathrm L^0(\Omega,\mathbb S^1)\ltimes\mathbb U(H)$ \cite{NovikovSevostyanova2022} are of this kind, and Riesz bases are the sequences invertibly equivalent to an orthonormal basis. Such relations are rigid: they preserve every spectral invariant of the analysis operator and they only compare families living in isomorphic spaces.\par\bigskip
\noindent In this article, we investigate a family of equivalence relations on Bessel families that do not preserve every spectral invariant but can be classified completely by spectral data. These relations are an instance of the general construction of the companion article \cite{ElIdrissiCC}: they are level $-1$ of the comparison category attached to the transport datum of bounded seminorms on Hilbert spaces (Section \ref{section-categories}). Level $0$ and the levels $k\ge1$ of the same category are computed in Section \ref{section-categories} as well: they carry no information for the classes of comparison operators that contain the operators from the zero space, and they record a stabilizer-type group of the family for invertible and unitary comparison operators. The classification itself is self-contained and does not use that theory.\par\bigskip
\noindent\textbf{Why coarse relations.} The relations $\simeq_{\mathcal O}$ forget everything about a family except the quadratic form $x\mapsto\|T_fx\|^2$ up to constants, and up to reparametrization of the space by operators in $\mathcal O$. They are therefore insensitive to redundancy, to the index set and to the norms of the vectors, and they treat frames, Riesz bases, nonframes and compact (for instance Schatten class) families uniformly: for bounded comparison operators the only invariant is the decay of the singular numbers, as in the theory of operator ideals \cite{GohbergKrein1969,Pietsch1980}, whereas orbit relations see the whole spectral measure. Varying $\mathcal O$ produces a hierarchy whose successive refinements measure the kernel (injective operators), the closed range (bounded below operators) and the multiplicity of the spectrum at each dyadic scale (invertible operators), the last invariant being a coarse, bounded-displacement equivalence on the logarithmic scale. The construction is not tied to Bessel families: it applies to any system of observations in the sense of \cite[Definition 5.2]{ElIdrissiCC}, so that replacing bounded seminorms by other structures on Hilbert or Banach spaces gives further relations (Section \ref{section-complements}).\par\bigskip
\noindent\textbf{What the coarse classes are good for.} The invariants are the ones that survive the operations one actually performs on families, and they are stable under the perturbations that arise in practice. We give three consequences (Section \ref{section-applications}).
\begin{enumerate}[(a)]
\item \emph{Relative Paley--Wiener stability} (Proposition \ref{proposition-perturbation}). If $f,g$ are Bessel families in the same space with $\|T_gx-T_fx\|\le\lambda\|T_fx\|$ for all $x$ and some $\lambda<1$, then $f\simeq_\mathrm{uni}g$; hence $k(f)=k(g)$, the multiplicity sequences are related by a bijection of bounded displacement, $(1-\lambda)s_n(f)\le s_n(g)\le(1+\lambda)s_n(f)$ and $D(f,g)\le\log\frac{1+\lambda}{1-\lambda}$. No assumption of frame, Riesz or completeness type is needed.
\item \emph{Operator ideals and weighted sums} (Corollary \ref{corollary-ideals}). Membership of $T_f$ in every Schatten class is invariant under $\simeq$, and so is the finiteness of $\int_\Omega\|f(\omega)\|^2d\mu=\operatorname{tr}S_f$; the ratio of these quantities for equivalent families is controlled by the constant in Theorem \ref{theorem-singular}. In particular a family can be certified Hilbert--Schmidt by comparing it with any equivalent family with known norms, living in a different space and over a different index set.
\item \emph{Similarity invariance} (Proposition \ref{proposition-similarity}). For $w\in\mathrm{GL}(H)$ the family $w^*f$ is $\simeq_\mathrm{inv}$-equivalent to $f$, so the kernel dimension and the dyadic multiplicity sequence up to bounded displacement are invariants of the $\mathrm{GL}(H)$-orbit of $f$, whereas the exact spectral measure is not.
\end{enumerate}
\par\bigskip
\noindent\textbf{Convention.} All Hilbert spaces are separable (of finite or infinite dimension, the zero space included) and are taken over the same field $\FF \in\{\RR ,\CC \}$. Dimensions therefore take values in $\{0,1,2,\dots,\infty\}$. We write $\NN =\{1,2,3,\dots\}$.\par\bigskip
\noindent\textbf{Notation.} Let $f:\Omega\to H$ be a Bessel family in a Hilbert space $H$ over a measure space $\Omega$, with analysis operator $T_f:H\to L^2(\Omega)$, frame operator $S_f=T_f^*T_f$ and seminorm $\rho(f)=\|T_f\,\cdot\|$ (Definition \ref{definition-bessel}). For a positive operator $S$ let $E_S$ be its spectral measure (Section \ref{section-spectral}), let
\[N_S(\lambda)=\dim E_S(\lambda,\infty)H\qquad(\lambda>0)\]
be its spectral counting function and, for $n\ge1$, let $\lambda_n(S)=\sup\{\lambda>0:N_S(\lambda)\ge n\}$ be its $n$-th eigenvalue counted with multiplicity. The singular numbers of $T_f$ are $s_n(f)=\lambda_n(S_f)^{1/2}$, and we write $k(f)=\dim\ker T_f$. We use six classes of bounded operators between Hilbert spaces (Definition \ref{definition-classes}): all bounded operators, injective operators, bounded below operators, isometries, invertible operators and unitary operators. For a class $\mathcal O$ of such operators we write $f\simeq_{\mathcal O}\tilde f$ if there are operators $w:\tilde H\to H$ and $\tilde w:H\to\tilde H$ in $\mathcal O$ such that $\rho(f)\circ w$ and $\rho(\tilde f)$ are mutually dominated up to constants, and so are $\rho(\tilde f)\circ\tilde w$ and $\rho(f)$ (Definition \ref{definition-relation}); in the language of \cite{ElIdrissiCC} this is level $-1$ of the comparison category (Proposition \ref{proposition-level-minus}). We write $\simeq$ for the relation $\simeq_\mathrm{all}$ given by the class of all bounded operators, and $\simeq_\mathrm{inj}$, $\simeq_\mathrm{bb}$, $\simeq_\mathrm{iso}$, $\simeq_\mathrm{inv}$, $\simeq_\mathrm{uni}$ for the other five (further classes are introduced in Section \ref{section-further}). The analysis operator $T_f$ is \emph{upper semi-Fredholm} if $\ker T_f$ is finite dimensional and $\ran T_f$ is closed; for a sequence $f$ with $\dim\ker T_f<\infty$ this means that $f$ is a frame for its closed linear span, which has finite codimension.\par\bigskip
\noindent\textbf{Main results.} The paper has four parts: the classification of Bessel families under the relations defined by the six classes $\mathcal O_\mathrm{all},\dots,\mathcal O_\mathrm{uni}$ (Sections \ref{section-all}--\ref{section-inv}), an operator distance (Section \ref{section-distance}), further classes of comparison operators and the orthonormal bases (Sections \ref{section-further} and \ref{section-onb}), and applications together with the relation to comparison categories (Sections \ref{section-applications} and \ref{section-complements}). Table 1 and the diagram below collect the relations, their complete invariants and the number of classes of noncompact families.\par\bigskip
\noindent\emph{Arbitrary comparison operators} (Section \ref{section-all}). The relation $f\simeq\tilde f$ holds if and only if there are bounded operators with $w^*S_fw=S_{\tilde f}$ and $\tilde w^*S_{\tilde f}\tilde w=S_f$, if and only if the counting functions are comparable, $N_{S_{\tilde f}}(\lambda)\le N_{S_f}(c\lambda)$ and $N_{S_f}(\lambda)\le N_{S_{\tilde f}}(c\lambda)$ for some $c\in(0,1]$ (Theorem \ref{theorem-all}), if and only if the singular numbers satisfy $C^{-1}s_n(\tilde f)\le s_n(f)\le C\,s_n(\tilde f)$ (Theorem \ref{theorem-singular}). The rank, the compactness and the membership of $T_f$ in each Schatten class are invariants; the families of rank $r$ form one class; the classes of compact families of infinite rank are the nonincreasing null sequences modulo term-by-term comparability, so there are continuum many; and all families with noncompact $T_f$ form one class, which contains the orthonormal bases of infinite-dimensional spaces, all their frames, and many nonframes (Corollary \ref{corollary-classes}).\par\bigskip
\noindent\emph{Injective and bounded below comparison operators} (Sections \ref{section-inj} and \ref{section-closed}). The relation $\simeq_\mathrm{inj}$ is $\simeq$ together with equality of the kernel dimensions $k(f)=k(\tilde f)$ (Theorem \ref{theorem-inj}), so the noncompact families split into one class for each $k\in\{0,1,\dots,\infty\}$. Bounded below operators and isometries give the same relation $\simeq_\mathrm{bb}=\simeq_\mathrm{iso}$, which is $\simeq_\mathrm{inj}$ together with the requirement that $T_f$ is upper semi-Fredholm if and only if $T_{\tilde f}$ is (Theorem \ref{theorem-closed}); the proof rests on a reservoir lemma that embeds a bounded operator isometrically into a noncompact one whose spectrum accumulates at $0$ (Lemma \ref{lemma-reservoir}). On compact families $\simeq_\mathrm{bb}$ coincides with $\simeq_\mathrm{inj}$, and the noncompact families split into the classes $(k,\mathrm{sF})$ for finite $k$ and $(k,\mathrm{nsF})$ for $k\le\infty$ (Corollary \ref{corollary-closed}). A family is $\simeq_\mathrm{bb}$-equivalent to an orthonormal basis if and only if it is a frame of a space of the same dimension (Proposition \ref{proposition-frames}).\par\bigskip
\noindent\emph{Invertible and unitary comparison operators} (Section \ref{section-inv}). For a positive operator $S$ put $W_S(\alpha,\beta)=\dim E_S(\alpha,\beta]H$ and $d_j(S)=W_S(2^{-j-1},2^{-j})$, the multiplicity of the spectrum in the $j$-th dyadic band; we write $d_j(f)$ for $d_j(S_f)$. Invertible and unitary comparison operators give the same relation, characterized by the equality $k(f)=k(\tilde f)$ together with any of the following: the existence of a unitary $u$ with $c\,S_{\tilde f}\le u^*S_fu\le C\,S_{\tilde f}$; the comparability $W_{S_{\tilde f}}(\alpha,\beta)\le W_{S_f}(\alpha/K,K\beta)$ and symmetrically, of the window multiplicities; or the existence of a bijection of bounded displacement between the multiplicity sequences $(d_j(f))_{j\in\ZZ}$ and $(d_j(\tilde f))_{j\in\ZZ}$ (Theorem \ref{theorem-inv}). Consequently $\simeq_\mathrm{inv}$ coincides with $\simeq_\mathrm{inj}$ on compact families and on noncompact families with closed range, but for every $k$ the noncompact families with $\dim\ker T_f=k$ and nonclosed range split into exactly $2^{\aleph_0}$ classes of $\simeq_\mathrm{inv}$, although they form a single class of $\simeq_\mathrm{bb}$ (Corollary \ref{corollary-inv}). The deepest ingredient is the window lemma (Lemma \ref{lemma-window}), which converts comparability of quadratic forms through an invertible operator into comparability of multiplicities; it holds for Fredholm operators up to an additive constant, and this version is used in Section \ref{section-further}. The strategy is described at the beginning of Section \ref{section-inv}.\par\bigskip
\noindent\emph{An operator distance} (Section \ref{section-distance}). Call a pair $(w,\tilde w)$ exact if $w^*S_fw=S_{\tilde f}$ and $\tilde w^*S_{\tilde f}\tilde w=S_f$, and put $D(f,\tilde f)=\log\inf\max(1,\|w\|\|\tilde w\|)$ over exact pairs. Then $D$ is an extended pseudometric with $D<\infty$ exactly on $\simeq$, and, when $S_f\ne0\ne S_{\tilde f}$, $D(f,\tilde f)=\log\bigl(\sup_ns_n(f)/s_n(\tilde f)\cdot\sup_ns_n(\tilde f)/s_n(f)\bigr)$; the infimum is attained for compact families (Proposition \ref{proposition-distance-metric}, Theorem \ref{theorem-distance}).\par\bigskip
\noindent\emph{Further classes of comparison operators} (Section \ref{section-further}). Surjective and co-isometric comparison operators give nothing new: $\simeq_\mathrm{sur}=\simeq_\mathrm{coi}=\simeq_\mathrm{inv}$ (Theorem \ref{theorem-sur}). Upper semi-Fredholm comparison operators allow the kernel to change by a finite amount and give exactly three classes of noncompact families (Theorem \ref{theorem-usF}, Corollary \ref{corollary-usF}). Fredholm comparison operators give a relation with $\simeq_\mathrm{inv}\subseteq\simeq_\mathrm{F}\subseteq\simeq_\mathrm{usF}$, equal to $\simeq_\mathrm{usF}$ on pairs of compact families and on pairs of upper semi-Fredholm families (Proposition \ref{proposition-fredholm}); but, unlike $\simeq_\mathrm{usF}$, it still sees the multiplicities of the spectrum: a Fredholm comparison operator can change the window multiplicities only by an additive constant (Theorem \ref{theorem-fredholm}), so that $\simeq_\mathrm{F}$ is strictly finer than $\simeq_\mathrm{usF}$ and has $2^{\aleph_0}$ classes of noncompact families, in each kernel dimension (Corollary \ref{corollary-fredholm}). Dense range comparison operators preserve the closedness of the range and coincide with $\simeq_\mathrm{inj}$ on families with closed range (Proposition \ref{proposition-dense}, Corollary \ref{corollary-closed-range}).\par\bigskip
\noindent\emph{Orthonormal bases} (Section \ref{section-onb}). For each relation we determine the families equivalent to an orthonormal basis $e$ of $\tilde H$: $f\simeq e$ iff $\rk T_f=\dim\tilde H$ when $\dim\tilde H<\infty$, and iff $T_f$ is noncompact when $\dim\tilde H=\infty$ (a compact family of infinite rank is not equivalent to $e$); $f\simeq_\mathrm{inj}e$ iff moreover $f$ is complete; $f\simeq_\mathrm{usF}e$ iff $f\simeq_\mathrm{F}e$ iff $f\simeq e$ and $T_f$ is upper semi-Fredholm; and for $\simeq_\mathrm{bb},\simeq_\mathrm{iso},\simeq_\mathrm{inv},\simeq_\mathrm{uni},\simeq_\mathrm{sur},\simeq_\mathrm{coi},\simeq_\mathrm{dr}$ iff $f$ is a frame for $H$ and $\dim H=\dim\tilde H$ (Theorem \ref{theorem-onb}).\par\bigskip
\noindent\emph{Applications and comparison categories} (Sections \ref{section-applications} and \ref{section-complements}). Relative Paley--Wiener stability, invariance of Schatten membership and of $\int\|f\|^2$, and similarity invariance are proved in Section \ref{section-applications}. In Section \ref{section-categories} we show that $\simeq_\mathcal O$ is level $-1$ of the comparison datum of bounded seminorms (Proposition \ref{proposition-level-minus}); that level $0$ and the levels $k\ge1$ are trivial whenever $\mathcal O$ contains the operators from the zero space, as for $\mathcal O_\mathrm{all}$, $\mathcal O_\mathrm{inj}$, $\mathcal O_\mathrm{bb}$, $\mathcal O_\mathrm{iso}$ and $\mathcal O_\mathrm{usF}$; and that for invertible and unitary operators the comparison category is a groupoid, so that level $0$ equals level $-1$, level $1$ at $f$ is the group $\{w\in\mathrm{GL}(H):\rho(f)\circ w\asymp\rho(f)\}$, which is a parabolic subgroup in finite dimension, and the higher levels vanish (Proposition \ref{proposition-levels}). Open problems are formulated in Section \ref{section-problems}.\par\bigskip
\noindent\textbf{Table 1.} The relations, the comparison operators defining them, their complete invariants, and the number of classes of noncompact families. Here $\mathrm{sF}$ stands for upper semi-Fredholm.
\begin{center}
\footnotesize
\begin{tabular}{>{\raggedright\arraybackslash}p{2.7cm}>{\raggedright\arraybackslash}p{2.7cm}>{\raggedright\arraybackslash}p{4.5cm}>{\raggedright\arraybackslash}p{2.0cm}}
\hline
Relation & Comparison operators & Complete invariants & Noncompact classes \\ \hline
$\simeq$ & bounded & $s_n$ up to constants (Theorem \ref{theorem-singular}) & $1$ \\
$\simeq_\mathrm{inj}$ & injective & $s_n$ up to constants, $k$ (Theorem \ref{theorem-inj}) & $\aleph_0$ \\
$\simeq_\mathrm{bb}=\simeq_\mathrm{iso}$ & bounded below, isometric & as $\simeq_\mathrm{inj}$, and whether $T_f$ is sF (Theorem \ref{theorem-closed}) & $\aleph_0$ \\
$\simeq_\mathrm{usF}$ & upper semi-Fredholm & $s_n$ up to constants, finiteness of $k$, whether $T_f$ is sF (Theorem \ref{theorem-usF}) & $3$ \\
$\simeq_\mathrm{F}$ & Fredholm & partial: window multiplicities up to an additive constant (Theorem \ref{theorem-fredholm}) & $2^{\aleph_0}$ \\
$\simeq_\mathrm{inv}=\simeq_\mathrm{uni}$ $=\simeq_\mathrm{sur}=\simeq_\mathrm{coi}$ & invertible, unitary, surjective, co-isometric & $k$ and the multiplicities $d_j$ up to bounded displacement (Theorems \ref{theorem-inv}, \ref{theorem-sur}) & $2^{\aleph_0}$ \\
$\simeq_\mathrm{dr}$ & dense range & partial (Proposition \ref{proposition-dense}) & $\ge2$ \\ \hline
\end{tabular}
\end{center}
\noindent\textbf{Diagram of the inclusions.} Every relation is a relation on positive operators (Proposition \ref{proposition-reduction}). The inclusions between them, and the number of classes of noncompact families in square brackets, are the following; all inclusions are strict except possibly the one marked by a question mark (Problem \ref{problem-dr}). All rows end in the same relation $\simeq$.
\[
\begin{array}{ccccccccc}
\underset{[2^{\aleph_0}]}{\simeq_\mathrm{inv}} & \subsetneq & \underset{[\aleph_0]}{\simeq_\mathrm{bb}} & \subsetneq & \underset{[\aleph_0]}{\simeq_\mathrm{inj}} & \subsetneq & \underset{[1]}{\simeq} & & \\[2pt]
\rotatebox[origin=c]{270}{$\subsetneq$} & & \rotatebox[origin=c]{270}{$\subsetneq$} & & & & & & \\[2pt]
\underset{[2^{\aleph_0}]}{\simeq_\mathrm{F}} & \subsetneq & \underset{[3]}{\simeq_\mathrm{usF}} & & & & & & \\
\end{array}
\]
\noindent Here $\simeq_\mathrm{inv}\subsetneq\simeq_\mathrm{F}\subsetneq\simeq_\mathrm{usF}\subsetneq\simeq$ in the bottom row ($\simeq_\mathrm{F}\subsetneq\simeq_\mathrm{usF}$ is Corollary \ref{corollary-fredholm}), $\simeq_\mathrm{bb}\subsetneq\simeq_\mathrm{usF}$ by the kernel dimension, and
\[
\simeq_\mathrm{inv}\ \subseteq^{?}\ \simeq_\mathrm{dr}\ \subsetneq\ \simeq .
\]
The strictness of $\simeq_\mathrm{inv}\subsetneq\simeq_\mathrm{bb}$ is Corollary \ref{corollary-inv}(d), that of $\simeq_\mathrm{bb}\subsetneq\simeq_\mathrm{inj}$ is Corollary \ref{corollary-closed}, that of $\simeq_\mathrm{dr}\subsetneq\simeq$ is Proposition \ref{proposition-dense}(b), and $\simeq_\mathrm{inv}\subsetneq\simeq_\mathrm{F}$ follows from Lemma \ref{lemma-extension}. The relations $\simeq_\mathrm{inj}$ and $\simeq_\mathrm{usF}$ are incomparable, and so are $\simeq_\mathrm{F}$ and $\simeq_\mathrm{bb}$.\par\bigskip
\noindent\textbf{A running example.} The hierarchy is best understood on one pair of families, which will be traced through every relation. We return to it in each section.

\begin{example}\label{example-running}
On $H=\ell^2\oplus\ell^2$ let $f$ be a Bessel family with frame operator $S_f=S_1:=\mathrm{id}\oplus\operatorname{diag}(1/n)_{n\ge1}$, and on $\tilde H=\bigoplus_{j\ge0}\ell^2$ let $\tilde f$ have $S_{\tilde f}=S_2:=\bigoplus_{j\ge0}2^{-j}\,\mathrm{id}_{\ell^2}$ (such families exist by Lemma \ref{lemma-basic}(c)). Both operators are noncompact, injective, and not bounded below, since $0$ is an accumulation point of their spectra: neither family is a frame and neither is upper semi-Fredholm. Their dyadic multiplicities are
\[d_0(S_1)=\infty,\quad d_j(S_1)=2^j\ (j\ge1),\qquad d_j(S_2)=\infty\ (j\ge0),\]
since the eigenvalue $1/n$ lies in the band $j$ if and only if $2^j\le n<2^{j+1}$. The multiplicity of the spectrum is infinite in every band of $S_2$ but only in the band $0$ for $S_1$; this is the whole difference between the two families.
\begin{center}
\footnotesize
\begin{tabular}{>{\raggedright\arraybackslash}p{3.0cm}>{\raggedright\arraybackslash}p{1.3cm}>{\raggedright\arraybackslash}p{7.3cm}}
\hline
Relation & $f\sim\tilde f$? & Reason \\ \hline
$\simeq$ & yes & both noncompact (Corollary \ref{corollary-classes}(d)); $\lambda_n(S_1)=\lambda_n(S_2)=1$ for all $n$ \\
$\simeq_\mathrm{inj}$ & yes & noncompact and $k=0$ for both (Theorem \ref{theorem-inj}) \\
$\simeq_\mathrm{bb}=\simeq_\mathrm{iso}$ & yes & $\simeq_\mathrm{inj}$ and neither is upper semi-Fredholm (Theorem \ref{theorem-closed}); an isometry is built with the reservoir lemma (Lemma \ref{lemma-reservoir}) \\
$\simeq_\mathrm{usF}$ & yes & the same invariants (Theorem \ref{theorem-usF}) \\
$\simeq_\mathrm{F}$ & no & $W_{S_2}(2^{-j-1},2^{-j})=\infty$ for all $j\ge0$ whereas $W_{S_1}$ is finite on every window below $1$ (Corollary \ref{corollary-fredholm}) \\
$\simeq_\mathrm{inv}=\simeq_\mathrm{uni}$ $=\simeq_\mathrm{sur}=\simeq_\mathrm{coi}$ & no & $d_j(S_2)=\infty>d_{[j-m,j+m]}(S_1)$ for large $j$ (Corollary \ref{corollary-inv}(d)) \\
$\simeq_\mathrm{dr}$ & open & Problem \ref{problem-dr} \\ \hline
\end{tabular}
\end{center}
Moreover $D(f,\tilde f)=0$ although $f\not\simeq_\mathrm{inv}\tilde f$ (Theorem \ref{theorem-distance}): the distance does not see the dyadic multiplicities. Finally $f\simeq e$ and $f\simeq_\mathrm{inj}e$ for every orthonormal basis $e$ of an infinite-dimensional space, while $f\not\simeq_\mathrm{bb}e$ because $f$ is not a frame (Proposition \ref{proposition-frames}); the same holds for $\tilde f$.
\end{example}\par\bigskip
\noindent\textbf{Plan.} Section \ref{section-prelim} collects the preliminaries: spectral measures, spectral invariants of positive operators, Bessel families and the comparison relations. Section \ref{section-all} proves the classification for bounded comparison operators. Section \ref{section-inj} treats injective operators. Section \ref{section-closed} treats bounded below operators and isometries. Section \ref{section-inv} treats invertible and unitary operators. Section \ref{section-distance} treats the operator distance. Section \ref{section-further} introduces further classes of comparison operators, including the Fredholm ones. Section \ref{section-onb} treats the orthonormal bases. Section \ref{section-applications} contains the applications. Section \ref{section-complements} contains the relation with comparison categories, an extension to operators between Banach spaces, a noncompatibility example, a remark on operator ideals and open problems.

\section{Preliminaries}
\label{section-prelim}

\subsection{Spectral measures}
\label{section-spectral}

\begin{definition}\label{definition-spectral-measure}
Let $H$ be a Hilbert space. A \textbf{spectral measure} (or projection-valued measure) on $\RR $ is a map $E$ from the Borel subsets of $\RR $ to the orthogonal projections of $H$ such that $E(\RR )=1$, $E(B\cap B')=E(B)E(B')$ for all Borel sets $B,B'$, and, for all $x,y\in H$, the map $B\mapsto\langle E(B)x,y\rangle$ is a countably additive complex measure. In particular $E(\emptyset)=0$, $E(B)E(B')=0$ for disjoint $B,B'$, and $E(B\cup B')=E(B)+E(B')$ for disjoint $B,B'$. For $x\in H$ the \textbf{scalar spectral measure} of $x$ is the positive measure $\mu_x(B)=\langle E(B)x,x\rangle=\|E(B)x\|^2$, of total mass $\mu_x(\RR )=\|x\|^2$. The range $E(B)H$ is called the \textbf{spectral subspace} of $B$.
\end{definition}

We use the spectral theorem for bounded self-adjoint operators in the following form, see e.g.\ \cite{ReedSimon1980}.

\begin{theorem}\label{theorem-spectral}
For every bounded self-adjoint operator $S$ on $H$ there is a unique spectral measure $E_S$ on $\RR $ with $E_S(\RR \setminus\sigma(S))=0$ and
\[\langle Sx,y\rangle=\int_{\RR }\lambda\,d\langle E_S(\lambda)x,y\rangle\qquad(x,y\in H),\]
where $\sigma(S)$ is the spectrum of $S$. For a bounded Borel function $\varphi$ one has $\varphi(S)=\int\varphi\,dE_S$, in particular $E_S(B)=\mathbf 1_B(S)$, and every bounded operator commuting with $S$ commutes with all $E_S(B)$. If $S\ge0$ then $E_S$ is carried by $[0,\|S\|]$, and $\langle Sx,x\rangle=\int\lambda\,d\mu_x(\lambda)$.
\end{theorem}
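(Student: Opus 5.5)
The statement is the spectral theorem for bounded self-adjoint operators, which the paper quotes from the literature. I would follow the standard continuous-functional-calculus route to the Borel calculus and read off the listed properties.

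\emph{Continuous calculus.} First build the continuous functional calculus. For a real polynomial $p$ one has the spectral mapping $\sigma(p(S))=p(\sigma(S))$. Since $p(S)$ is self-adjoint, its norm equals its spectral radius, which gives
\[\|p(S)\|=\sup_{\sigma(S)}|p|.\]
By Weierstrass approximation on the compact set $\sigma(S)\subset\RR$, the map $p\mapsto p(S)$ extends to an isometric $*$-homomorphism $C(\sigma(S))\to B(H)$. It is positive: if $\varphi\ge0$ then $\varphi=\psi^2$ with $\psi$ real, so $\varphi(S)=\psi(S)^*\psi(S)\ge0$.

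\emph{Scalar measures and the Borel calculus.} For fixed $x,y$, the functional $\varphi\mapsto\langle\varphi(S)x,y\rangle$ is bounded on $C(\sigma(S))$. The Riesz--Markov theorem then gives a complex Borel measure $\mu_{x,y}$ on $\sigma(S)$. This measure is sesquilinear in $(x,y)$, satisfies $\mu_{x,x}\ge0$, and has total variation at most $\|x\|\|y\|$. For a bounded Borel $\varphi$, the form $(x,y)\mapsto\int\varphi\,d\mu_{x,y}$ is a bounded sesquilinear form, so it defines an operator $\varphi(S)$.

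\emph{The main obstacle} is showing that this extended map is still multiplicative and $*$-preserving. I would do it by a monotone-class argument. The set of bounded Borel $\varphi$ with $(\varphi\psi)(S)=\varphi(S)\psi(S)$ for all continuous $\psi$ contains $C(\sigma(S))$. It is closed under bounded pointwise limits, by dominated convergence in each $\mu_{x,y}$. Hence it contains all bounded Borel functions. Repeating the argument with $\psi$ bounded Borel gives full multiplicativity.

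\emph{The projections.} Put $E_S(B)=\mathbf 1_B(S)$. Multiplicativity and reality of $\mathbf 1_B$ show that $E_S(B)$ is an orthogonal projection and that $E_S(B\cap B')=E_S(B)E_S(B')$. Countable additivity of $\langle E_S(\cdot)x,y\rangle=\mu_{x,y}$ is built in. One also has $E_S(\RR)=\mathbf 1(S)=1$, and $E_S(\RR\setminus\sigma(S))=0$ because $\mu_{x,y}$ lives on $\sigma(S)$. Applying the definition to $\varphi(\lambda)=\lambda$ gives $\langle Sx,y\rangle=\int\lambda\,d\mu_{x,y}$, and the relation $\varphi(S)=\int\varphi\,dE_S$ is exactly the construction.

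\emph{Uniqueness.} Any spectral measure $E$ with these properties has all polynomial moments $\int\lambda^k\,d\langle E(\lambda)x,y\rangle$ equal to $\langle S^kx,y\rangle$. Here one uses multiplicativity of $\int\cdot\,dE$, itself a consequence of $E(B\cap B')=E(B)E(B')$ on simple functions. Weierstrass approximation on a compact set carrying both measures then makes them agree on continuous functions. The Riesz uniqueness theorem finishes the argument.

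\emph{Commutation.} If $AS=SA$, then $A$ commutes with $p(S)$ for every polynomial $p$, and by norm limits with $\varphi(S)$ for continuous $\varphi$. So
\[\int\varphi\,d\mu_{Ax,y}=\int\varphi\,d\mu_{x,A^*y}\]
for all continuous $\varphi$, hence $\mu_{Ax,y}=\mu_{x,A^*y}$. This gives $AE_S(B)=E_S(B)A$.

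\emph{Positive case.} If $S\ge0$ then $\sigma(S)\subset[0,\|S\|]$, so $E_S$ is carried by $[0,\|S\|]$. Taking $y=x$ in the integral formula gives $\langle Sx,x\rangle=\int\lambda\,d\mu_x$.
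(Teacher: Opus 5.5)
Your proposal is correct and follows the standard Reed--Simon route (polynomial calculus with $\|p(S)\|=\sup_{\sigma(S)}|p|$, Riesz--Markov measures $\mu_{x,y}$, bounded Borel calculus, then $E_S(B)=\mathbf 1_B(S)$); the paper gives no proof of its own and simply cites that reference. The one step you announce but do not carry out is $\bar\varphi(S)=\varphi(S)^*$, which you need for $E_S(B)$ to be self-adjoint. It follows at once from $\mu_{y,x}=\overline{\mu_{x,y}}$, which in turn comes from Riesz uniqueness applied to the identity $\langle\varphi(S)y,x\rangle=\overline{\langle\bar\varphi(S)x,y\rangle}$ for continuous $\varphi$.
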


\noindent Here $S\ge0$ means $\langle Sx,x\rangle\ge0$ for all $x$, and $A\le B$ means $B-A\ge0$ (the Loewner order). A bounded self-adjoint operator is determined by its quadratic form $x\mapsto\langle Sx,x\rangle$ (by polarization over $\CC $, and by symmetry over $\RR $). Throughout, $E=E_S$ denotes the spectral measure of a positive bounded operator $S$.

We record the basic properties of spectral subspaces.

\begin{lemma}\label{lemma-spectral-subspaces}
Let $S\ge0$ be bounded and $E=E_S$.
\begin{enumerate}[(a)]
\item Each spectral subspace $E(B)H$ reduces $S$ (that is, $SE(B)=E(B)S$), and spectral subspaces of disjoint Borel sets are orthogonal. If $B_1,B_2,\dots$ is a countable partition of $[0,\infty)$ into Borel sets then $H=\bigoplus_iE(B_i)H$.
\item $E(\{0\})H=\ker S$ and $E(0,\infty)H=(\ker S)^\perp$.
\item If $B\subseteq[0,\mu]$ then $\langle Sx,x\rangle\le\mu\|x\|^2$ for all $x\in E(B)H$.
\item If $B\subseteq(\lambda,\infty)$ then $\langle Sx,x\rangle>\lambda\|x\|^2$ for all nonzero $x\in E(B)H$.
\end{enumerate}
\end{lemma}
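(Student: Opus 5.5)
All four parts follow from the spectral theorem (Theorem \ref{theorem-spectral}) through the functional calculus $E(B)=\mathbf 1_B(S)$ and the scalar measures $\mu_x$. I will treat them in order, and I expect the only point requiring care to be the orthogonal decomposition in (a).

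\emph{Part (a).} The operator $S$ commutes with itself, so by Theorem \ref{theorem-spectral} it commutes with every $E(B)$. Hence $E(B)H$ is invariant under $S$, and since $E(B)$ is self-adjoint, $E(B)H$ reduces $S$. For disjoint $B,B'$ we have $E(B)E(B')=E(\emptyset)=0$, so the ranges are orthogonal.

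For a countable partition $(B_i)$ of $[0,\infty)$, note first that $E([0,\infty))=1$, because $E$ is carried by $[0,\|S\|]$. Fix $x\in H$ and put $P_n=\sum_{i\le n}E(B_i)$. This is the projection $E(\bigcup_{i\le n}B_i)$, and
\[\|x-P_nx\|^2=\mu_x\Bigl([0,\infty)\setminus\bigcup_{i\le n}B_i\Bigr)\to0\]
by countable additivity of $\mu_x$. So $x=\sum_iE(B_i)x$, a series of pairwise orthogonal terms, and $H=\bigoplus_iE(B_i)H$.

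\emph{Part (b).} For $x\in H$,
\[\langle Sx,x\rangle=\int\lambda\,d\mu_x,\qquad \|Sx\|^2=\int\lambda^2\,d\mu_x .\]
The second identity comes from the functional calculus for $\varphi(\lambda)=\lambda^2$ on the bounded set $[0,\|S\|]$. Hence $Sx=0$ if and only if $\mu_x$ is concentrated on $\{0\}$, that is, $\mu_x((0,\infty))=\|E(0,\infty)x\|^2=0$. Equivalently $x=E(\{0\})x$, which gives $\ker S=E(\{0\})H$. By (a) applied to the partition $\{0\},(0,\infty)$, the orthogonal complement of $\ker S$ is $E(0,\infty)H$.

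\emph{Part (c).} For $x\in E(B)H$ we have $x=E(B)x$, so
\[\mu_x(C)=\|E(C)E(B)x\|^2=\mu_x(C\cap B),\]
and $\mu_x$ is carried by $B\subseteq[0,\mu]$. Then $\langle Sx,x\rangle=\int_B\lambda\,d\mu_x\le\mu\,\mu_x(B)=\mu\|x\|^2$.

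\emph{Part (d).} By the same reasoning, $\mu_x$ is carried by $B\subseteq(\lambda,\infty)$. Therefore
\[\langle Sx,x\rangle-\lambda\|x\|^2=\int_B(t-\lambda)\,d\mu_x(t),\]
and the integrand is strictly positive on $B$. The total mass is $\|x\|^2>0$, so $\mu_x$ is a nonzero measure on $B$, and integrating a strictly positive function against it gives a strictly positive result. Here I will spell out that the sets $\{t\in B:t-\lambda>1/m\}$ increase to $B$, so one of them has positive $\mu_x$-measure, which gives strict positivity.
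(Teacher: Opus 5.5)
Your proof is correct and follows essentially the same route as the paper: commutation with $E(B)$ and strong convergence of $\sum_iE(B_i)$ to $E([0,\infty))=1$ for (a), and the scalar measures $\mu_x$ for (b)--(d), including the observation that $\mu_x$ is carried by $B$ when $x\in E(B)H$. The only differences are minor. In (b) you use $\|Sx\|^2=\int\lambda^2\,d\mu_x$ to get both inclusions at once, where the paper uses $\langle Sx,x\rangle=\int\lambda\,d\mu_x$ for one inclusion and $SE(\{0\})=0$ for the other. In (d) you spell out the strict-positivity step that the paper leaves implicit.
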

\begin{proof}
(a) $SE(B)=E(B)S$ is the commutation statement of Theorem \ref{theorem-spectral} applied to $S$ itself; orthogonality follows from $E(B)E(B')=0$, and the decomposition from $\sum_iE(B_i)=E([0,\infty))=1$ (strongly), which holds because $E$ is carried by $[0,\|S\|]$.
(b) If $Sx=0$ then $\int\lambda\,d\mu_x=\langle Sx,x\rangle=0$ with $\lambda\ge0$ $\mu_x$-a.e., so $\mu_x$ is carried by $\{0\}$ and $\|E(\RR \setminus\{0\})x\|^2=\mu_x(\RR \setminus\{0\})=0$, hence $x=E(\{0\})x$. Conversely, for $x\in E(\{0\})H$ one has $Sx=SE(\{0\})x=\int\lambda\mathbf 1_{\{0\}}(\lambda)\,dE=0$. The second equality follows from $E(0,\infty)=1-E(\{0\})$ (since $E$ is carried by $[0,\infty)$) and from $\ker S$ being closed.
(c)--(d) Let $x\in E(B)H$. Then $\mu_x(\RR \setminus B)=\|E(\RR \setminus B)E(B)x\|^2=0$, so $\mu_x$ is carried by $B$ and $\langle Sx,x\rangle=\int_B\lambda\,d\mu_x(\lambda)$. If $B\subseteq[0,\mu]$ this is at most $\mu\,\mu_x(\RR )=\mu\|x\|^2$. If $B\subseteq(\lambda,\infty)$ then $\lambda'-\lambda>0$ on $B$, so $\int(\lambda'-\lambda)\,d\mu_x(\lambda')>0$ as soon as $\mu_x\ne0$, that is as soon as $x\ne0$; hence $\langle Sx,x\rangle-\lambda\|x\|^2>0$.
\end{proof}

\subsection{Spectral invariants of positive operators}
\label{section-invariants}

We now introduce the spectral invariants of a positive operator that will be used throughout: counting functions, multiplicities and eigenvalues.

\begin{definition}\label{definition-invariants}
Let $S\ge0$ be a bounded operator on a separable Hilbert space $H$, and $E=E_S$. All dimensions below lie in $\{0,1,2,\dots,\infty\}$.
\begin{itemize}
\item The \textbf{counting function} is $N_S(\lambda)=\dim E(\lambda,\infty)H$ for $\lambda>0$. It is nonincreasing in $\lambda$, and $N_S(\lambda)=0$ for $\lambda\ge\|S\|$.
\item The \textbf{small-spectrum count} is $M_S(\mu)=\dim E[0,\mu]H$ for $\mu>0$; it is nondecreasing in $\mu$.
\item The \textbf{window count} is $W_S(\alpha,\beta)=\dim E(\alpha,\beta]H$ for $0<\alpha<\beta$.
\item The \textbf{kernel dimension} is $k(S)=\dim\ker S=\dim E\{0\}H$.
\item The \textbf{dyadic bands} are $H_j(S)=E(2^{-j-1},2^{-j}]H$ and $d_j(S)=\dim H_j(S)$, for $j\in\ZZ $. For integers $a\le b$ put $d_{[a,b]}(S)=\sum_{j=a}^bd_j(S)=W_S(2^{-b-1},2^{-a})$, the dimension of $E(2^{-b-1},2^{-a}]H$; since $S$ is bounded, $d_j(S)=0$ for all sufficiently negative $j$.
\item The \textbf{eigenvalues counted with multiplicity} are $\lambda_n(S)=\sup\{\lambda>0:N_S(\lambda)\ge n\}$ for $n\ge1$, with $\sup\emptyset=0$.
\end{itemize}
\end{definition}

\begin{example}\label{example-counting}
If $S$ is diagonal with eigenvalues $s_1\ge s_2\ge\dots$ in an orthonormal basis, then $N_S(\lambda)=\#\{n:s_n>\lambda\}$. If $S=\mathrm{id}$ on an infinite-dimensional space then $N_S(\lambda)=\infty$ for $\lambda<1$ and $N_S(\lambda)=0$ for $\lambda\ge1$. If $S$ has finite rank $r$ then $N_S(\lambda)\le r$ for all $\lambda>0$, with equality for $\lambda$ small. Comparing counting functions after a rescaling $\lambda\mapsto c\lambda$ is thus comparing the spectra of $S$ and $\tilde S$ up to constant factors, once their eigenvalues are matched in decreasing order.
\end{example}

The dyadic bands decompose the space orthogonally and control the quadratic form on each piece.

\begin{lemma}\label{lemma-bands}
Let $S\ge0$ be bounded. Then $H=\ker S\oplus\bigoplus_{j\in\ZZ }H_j(S)$ (orthogonal sum), every $H_j(S)$ reduces $S$, and for $0\ne x\in H_j(S)$
\[2^{-j-1}\|x\|^2<\langle Sx,x\rangle\le2^{-j}\|x\|^2 .\]
Moreover $E(2^{-n-1},\infty)H=\bigoplus_{j\le n}H_j(S)$ and $E[0,2^{-n}]H=\ker S\oplus\bigoplus_{j\ge n}H_j(S)$ for every $n\in\ZZ $, and $\rk S:=\dim(\ker S)^\perp=\sum_jd_j(S)$.
\end{lemma}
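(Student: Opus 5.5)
The plan is to derive everything from Lemma \ref{lemma-spectral-subspaces} applied to the countable Borel partition of $[0,\infty)$ given by $\{0\}$ together with the dyadic intervals $(2^{-j-1},2^{-j}]$, $j\in\ZZ$. These sets are pairwise disjoint, and their union is $[0,\infty)$: every $\lambda>0$ lies in exactly one interval $(2^{-j-1},2^{-j}]$, namely for $j=\lceil-\log_2\lambda\rceil-1$.

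The orthogonal decomposition follows from Lemma \ref{lemma-spectral-subspaces}(a), which gives
\[H=E\{0\}H\oplus\bigoplus_jE(2^{-j-1},2^{-j}]H .\]
By part (b), $E\{0\}H=\ker S$. The same part (a) shows that each $H_j(S)$ reduces $S$.

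For the two-sided bound on $\langle Sx,x\rangle$, I apply part (c) with $B=(2^{-j-1},2^{-j}]\subseteq[0,2^{-j}]$, which gives the upper bound $\langle Sx,x\rangle\le2^{-j}\|x\|^2$. I apply part (d) with $B\subseteq(2^{-j-1},\infty)$, which gives the strict lower bound $\langle Sx,x\rangle>2^{-j-1}\|x\|^2$ for $x\ne0$.

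For the identities involving $n$, I would use two decompositions into disjoint unions:
\[(2^{-n-1},\infty)=(\|S\|,\infty)\cup\bigcup_{j\le n}(2^{-j-1},2^{-j}]\cap[0,\|S\|] ,\]
where the first piece carries no spectral mass, and
\[[0,2^{-n}]=\{0\}\cup\bigcup_{j\ge n}(2^{-j-1},2^{-j}] .\]
Countable additivity of $E$ in the strong sense gives the corresponding orthogonal sums. To handle the unbounded part cleanly, it is simplest to note that $E(2^{-n-1},\infty)=\sum_{j\le n}E(2^{-j-1},2^{-j}]$ strongly, because $E$ is carried by $[0,\|S\|]$. The only slightly delicate point is to justify the passage from a strongly convergent sum of mutually orthogonal projections to the closed orthogonal direct sum of their ranges. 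This is standard, since the range of $\sum P_i$ (strong limit, $P_iP_k=0$ for $i\ne k$) is the closure of the span of the ranges $P_iH$. It is the same argument already invoked in part (a), so I would simply cite it.

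Finally, $(\ker S)^\perp=E(0,\infty)H=\bigoplus_jH_j(S)$ by (b) and the previous step. The dimension of an orthogonal direct sum is the sum of the dimensions in $\{0,1,\dots,\infty\}$: orthonormal bases of the summands combine into an orthonormal basis of the sum. This gives $\rk S=\sum_jd_j(S)$. I expect no real obstacle; the main care point is bookkeeping of the interval endpoints so that the bands are genuinely disjoint and exhaust $(0,\infty)$.
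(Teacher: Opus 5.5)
Your argument is correct and is the paper's proof: it applies Lemma \ref{lemma-spectral-subspaces}(a)--(d) to the partition of $[0,\infty)$ into $\{0\}$ and the dyadic intervals, reads off the two identities from the corresponding unions, and adds dimensions over the orthogonal sum. Two small slips need fixing. First, the band containing $\lambda>0$ is $j=\lfloor-\log_2\lambda\rfloor$; your formula $\lceil-\log_2\lambda\rceil-1$ is off by one at powers of $2$, for instance at $\lambda=1$. Second, the detour through $\|S\|$ is unnecessary, since $\bigcup_{j\le n}(2^{-j-1},2^{-j}]=(2^{-n-1},\infty)$ exactly; moreover, read as a set identity, your display fails when $\|S\|<2^{-n-1}$.
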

\begin{proof}
The sets $\{0\}$ and $(2^{-j-1},2^{-j}]$, $j\in\ZZ $, form a countable partition of $[0,\infty)$, so Lemma \ref{lemma-spectral-subspaces}(a),(b) gives the orthogonal decomposition and the reducing property. The two inequalities are Lemma \ref{lemma-spectral-subspaces}(c) with $B\subseteq[0,2^{-j}]$ and (d) with $B\subseteq(2^{-j-1},\infty)$. The unions $\bigcup_{j\le n}(2^{-j-1},2^{-j}]=(2^{-n-1},\infty)$ and $\{0\}\cup\bigcup_{j\ge n}(2^{-j-1},2^{-j}]=[0,2^{-n}]$ give the two identities. The last formula holds because $(\ker S)^\perp=\bigoplus_jH_j(S)$ and dimensions of orthogonal sums add.
\end{proof}

The eigenvalues counted with multiplicity are related to the counting function as follows.

\begin{lemma}\label{lemma-lambda}
Let $S\ge0$ be bounded on a separable space.
\begin{enumerate}[(a)]
\item For every finite $n\ge1$ and $\lambda>0$: $N_S(\lambda)\ge n\iff\lambda_n(S)>\lambda$.
\item $\lambda_1(S)\ge\lambda_2(S)\ge\dots\ge0$ and $N_S(\lambda)=\#\{n\ge1:\lambda_n(S)>\lambda\}$ for all $\lambda>0$.
\item $\rk S=\#\{n:\lambda_n(S)>0\}$.
\end{enumerate}
\end{lemma}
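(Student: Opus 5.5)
The whole lemma follows from the definition $\lambda_n(S)=\sup\{\lambda>0:N_S(\lambda)\ge n\}$ together with two properties of the counting function. First, $N_S$ is nonincreasing. Second, $N_S$ is right-continuous in a suitable sense: for each finite $n$, the set $\{\lambda>0:N_S(\lambda)\ge n\}$ is open in $(0,\infty)$. To prove the second property, I would use $E(\lambda,\infty)=\lim_{\mu\downarrow\lambda}E(\mu,\infty)$ strongly, which is countable additivity of $E$ along $(\lambda,\infty)=\bigcup_m(\lambda+1/m,\infty)$. Hence $E(\lambda,\infty)H$ is the closure of the increasing union of the subspaces $E(\lambda+1/m,\infty)H$. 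If $\dim E(\lambda,\infty)H\ge n$ with $n$ finite, pick $n$ linearly independent vectors in it. They are limits of vectors in the union, and linear independence is an open condition (a Gram determinant is nonzero), so for $m$ large we get $n$ independent vectors in $E(\lambda+1/m,\infty)H$. Thus $N_S(\lambda+1/m)\ge n$. This openness step is the only genuinely nontrivial point, and it is where finiteness of $n$ is used.

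For (a), the set $A_n=\{\lambda>0:N_S(\lambda)\ge n\}$ is an initial interval of $(0,\infty)$ because $N_S$ is nonincreasing. It is open by the step above and bounded by $\|S\|$, since $N_S(\lambda)=0$ for $\lambda\ge\|S\|$. Hence $A_n=(0,\lambda_n(S))$, with $A_n=\emptyset$ exactly when $\lambda_n(S)=0$. Therefore $N_S(\lambda)\ge n\iff\lambda<\lambda_n(S)$, which is (a).

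For (b), monotonicity of $\lambda_n(S)$ in $n$ follows from $A_{n+1}\subseteq A_n$, and $\lambda_n(S)\ge0$ because $\sup\emptyset=0$. For the formula, fix $\lambda>0$. By (a), $\{n\ge1:\lambda_n(S)>\lambda\}=\{n\ge1\text{ finite}:n\le N_S(\lambda)\}$. This set has cardinality $N_S(\lambda)$ both when $N_S(\lambda)$ is finite and when it is $\infty$, in which case every finite $n$ qualifies.

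For (c), write $(\ker S)^\perp=E(0,\infty)H$ (Lemma \ref{lemma-spectral-subspaces}(b)). This is the closure of the increasing union $\bigcup_m E(1/m,\infty)H$, again by countable additivity. So $\rk S=\sup_m N_S(1/m)$: the inequality $\ge$ is clear, and $\le$ follows by the same independence argument applied to any finite $n\le\rk S$. Finally, (b) gives $\sup_mN_S(1/m)=\sup_m\#\{n:\lambda_n(S)>1/m\}=\#\{n:\lambda_n(S)>0\}$, the last equality holding because the sets increase with $m$ and their union is $\{n:\lambda_n(S)>0\}$.
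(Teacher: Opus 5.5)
Your proof is correct. Parts (b) and (c) follow the paper's route; the difference is in the nontrivial direction of (a).

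\textbf{The paper's argument for (a).} It argues variationally. For an $n$-dimensional $V\subseteq E(\lambda,\infty)H$ it takes $\lambda''=\min\{\langle Sx,x\rangle:x\in V,\ \|x\|=1\}$. This minimum exceeds $\lambda$ by compactness of the unit sphere of $V$ and Lemma \ref{lemma-spectral-subspaces}(d). It then shows $N_S(\mu)\ge n$ for every $\mu<\lambda''$: otherwise the orthogonal projection of $V$ onto $E(\mu,\infty)H$ would kill some $y\ne0$, and such a $y$ satisfies $\langle Sy,y\rangle\le\mu\|y\|^2$, a contradiction.

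\textbf{Your argument for (a).} You use the strong convergence $E(\lambda+1/m,\infty)\uparrow E(\lambda,\infty)$, which is countable additivity of the spectral measure. You combine it with the stability of linear independence under small perturbations, via the Gram determinant.

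\textbf{What each route buys.} Yours treats (a) and (c) with one approximation device. It also makes explicit a step the paper leaves implicit in (c): that density of the increasing union $\bigcup_pE(1/p,\infty)H$ in $(\ker S)^\perp$ forces $\rk S=\sup_pN_S(1/p)$. The paper's argument instead stays within the quadratic-form estimates of Lemma \ref{lemma-spectral-subspaces}. It produces an explicit margin $\lambda''>\lambda$ below which $N_S\ge n$, in the spirit of the min-max principle.
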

\begin{proof}
(a) ($\Leftarrow$) If $\lambda_n(S)>\lambda$ there is $\lambda''>\lambda$ with $N_S(\lambda'')\ge n$, and $N_S(\lambda)\ge N_S(\lambda'')\ge n$ since $N_S$ is nonincreasing. ($\Rightarrow$) Let $V\subseteq E(\lambda,\infty)H$ have dimension $n$. The function $x\mapsto\langle Sx,x\rangle$ is continuous and, by Lemma \ref{lemma-spectral-subspaces}(d), strictly larger than $\lambda$ on the unit sphere of $V$, which is compact; so its minimum there is some $\lambda''>\lambda$. Let $\mu<\lambda''$. If $\dim E(\mu,\infty)H<n$, the orthogonal projection of $V$ onto $E(\mu,\infty)H$ has a nonzero kernel element $y\in V$; then $y\in E[0,\mu]H$ and $\langle Sy,y\rangle\le\mu\|y\|^2<\lambda''\|y\|^2$, a contradiction. So $N_S(\mu)\ge n$ for all $\mu<\lambda''$, which gives $\lambda_n(S)\ge\lambda''>\lambda$.
(b) By (a), $\{\lambda:N_S(\lambda)\ge n+1\}\subseteq\{\lambda:N_S(\lambda)\ge n\}$, so $\lambda_{n+1}\le\lambda_n$. For fixed $\lambda$, (a) says that $\{n:\lambda_n>\lambda\}=\{n\text{ finite}:N_S(\lambda)\ge n\}$, which has $N_S(\lambda)$ elements.
(c) The subspaces $E(1/p,\infty)H$ increase with $p$ and their union is dense in $E(0,\infty)H=(\ker S)^\perp$ (as $E(1/p,\infty)\uparrow E(0,\infty)$ strongly). Hence $\rk S=\sup_pN_S(1/p)$, and by (b) this is $\sup_p\#\{n:\lambda_n>1/p\}=\#\{n:\lambda_n>0\}$.
\end{proof}

For compact operators, the counting function and the eigenvalues have their usual meaning.

\begin{lemma}\label{lemma-compact}
For a bounded operator $S\ge0$ on a separable space the following are equivalent: (i) $S$ is compact; (ii) $N_S(\lambda)<\infty$ for every $\lambda>0$; (iii) $\lambda_n(S)\to0$. In this case there is an orthonormal basis $(x_n)_{1\le n\le r}$ of $(\ker S)^\perp$, $r=\rk S$, with $Sx_n=\lambda_n(S)x_n$.
\end{lemma}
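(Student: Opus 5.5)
The plan is to prove the cycle (i)$\Rightarrow$(ii)$\Rightarrow$(iii)$\Rightarrow$(i), and then obtain the eigenbasis from the finite-dimensional spectral subspaces.

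For (i)$\Rightarrow$(ii), fix $\lambda>0$ and put $V=E(\lambda,\infty)H$. By Lemma \ref{lemma-spectral-subspaces}(a), $V$ reduces $S$. By (d), $\langle Sx,x\rangle>\lambda\|x\|^2$ on $V\setminus\{0\}$. Hence the restriction $S|_V$ is a positive operator on $V$ with $S|_V\ge\lambda$, so it is invertible on $V$. This restriction is also compact, since $S$ is compact and $V$ is reduced. A compact invertible operator forces the identity of $V$ to be compact, so $\dim V<\infty$. For (ii)$\Rightarrow$(iii), suppose $\lambda_n(S)\not\to0$. The sequence is nonincreasing by Lemma \ref{lemma-lambda}(b), so some $\lambda>0$ satisfies $\lambda_n(S)>\lambda$ for all $n$. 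Lemma \ref{lemma-lambda}(a) then gives $N_S(\lambda)\ge n$ for every $n$, contradicting (ii).

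For (iii)$\Rightarrow$(i), I would first show that (iii) implies (ii). Given $\lambda>0$, choose $n_0$ with $\lambda_{n_0}\le\lambda$. By Lemma \ref{lemma-lambda}(b), $N_S(\lambda)=\#\{n:\lambda_n>\lambda\}<n_0$. Then I approximate $S$ by the finite-rank operators $S_p=SE(1/p,\infty)$. Their ranks are finite by (ii), because $E(1/p,\infty)H$ reduces $S$. The remainder satisfies $\|S-S_p\|=\|SE[0,1/p]\|\le1/p$ by Lemma \ref{lemma-spectral-subspaces}(c), applied to the positive restriction on the reducing subspace $E[0,1/p]H$, where the norm of a positive operator equals the supremum of its quadratic form. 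So $S$ is a norm limit of finite-rank operators, hence compact.

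For the eigenbasis, I would work on each finite-dimensional reducing subspace $V_p=E(1/p,\infty)H$. The restriction $S|_{V_p}$ is a positive matrix, so it diagonalizes in an orthonormal basis. For $p<q$, the difference $V_q\ominus V_p=E(1/q,1/p]H$ also reduces $S$, so a basis of $V_p$ extends to one of $V_q$ by diagonalizing on the difference. The union over $p$ is an orthonormal family of eigenvectors. It is complete in $(\ker S)^\perp$ by the density argument in Lemma \ref{lemma-lambda}(c).

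The main obstacle is matching the eigenvalues of this basis with $\lambda_n(S)$, ordered decreasingly with multiplicity. I would handle it as follows. The eigenvalues that exceed $1/p$ are exactly those of $S|_{V_p}$. Indeed, an eigenvector $x$ with eigenvalue $\mu$ has $\mu_x=\|x\|^2\delta_\mu$, so $x\in V_p$ if and only if $\mu>1/p$. Counting these gives $N_S(\lambda)=\#\{n:\mu_n>\lambda\}$, where $(\mu_n)$ is the decreasing list of eigenvalues. A nonincreasing sequence is determined by its counting function, so comparison with Lemma \ref{lemma-lambda}(b) yields $\mu_n=\lambda_n(S)$.
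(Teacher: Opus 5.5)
Your proof is correct. For the three equivalences it is the paper's argument:
\begin{itemize}
\item (i)$\Rightarrow$(ii): $S$ is bounded below by $\lambda$ on the reducing subspace $E(\lambda,\infty)H$, so this subspace must be finite-dimensional.
\item (ii)$\Leftrightarrow$(iii): this is Lemma \ref{lemma-lambda}.
\item (ii)$\Rightarrow$(i): $S$ is the norm limit of the finite-rank operators $SE(\lambda,\infty)$, with error $\|SE[0,\lambda]\|\le\lambda$.
\end{itemize}

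The difference lies in the eigenbasis. The paper invokes the spectral theorem for compact self-adjoint operators to obtain an orthonormal eigenbasis of $(\ker S)^\perp$ with eigenvalues $\mu_1\ge\mu_2\ge\dots>0$. It then identifies $\mu_n=\lambda_n(S)$ from $E(\lambda,\infty)H=\operatorname{span}\{x_n:\mu_n>\lambda\}$ and Lemma \ref{lemma-lambda}(a). You instead build the basis yourself, by diagonalizing $S$ on the nested finite-dimensional reducing subspaces $E(1/p,\infty)H$ and on the shells $E(1/q,1/p]H$.

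Your route is self-contained: it uses only the projection-valued spectral theorem and finite-dimensional diagonalization. It also shows directly that the eigenvalues can be listed decreasingly, since only finitely many exceed $1/p$. The price is the extra observation that an eigenvector has a point-mass scalar spectral measure. The paper's route is shorter but relies on a second black box.

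Two small points in your matching step:
\begin{itemize}
\item You need $E(\lambda,\infty)H=\operatorname{span}\{x_n:\mu_n>\lambda\}$ for every $\lambda>0$, not only for $\lambda=1/p$. The same point-mass argument gives this.
\item When $r<\infty$, the list $(\mu_n)_{n\le r}$ must be padded with zeros before you compare it with $(\lambda_n(S))_{n\ge1}$.
\end{itemize}
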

\begin{proof}
(ii)$\iff$(iii) holds by Lemma \ref{lemma-lambda}(b), since $(\lambda_n)$ is nonincreasing. (i)$\Rightarrow$(ii): if $N_S(\lambda)=\infty$ then $V=E(\lambda,\infty)H$ is infinite-dimensional and reduces $S$, and $S|_V$ satisfies $\|Sx\|\ge\lambda\|x\|$ (by $\langle Sx,x\rangle\ge\lambda\|x\|^2$), so $S|_V$ is compact and bounded below on an infinite-dimensional space, which is impossible (the unit ball of $V$ would be relatively compact). (ii)$\Rightarrow$(i): $S\,E(\lambda,\infty)$ has finite rank and $\|S-S\,E(\lambda,\infty)\|=\|S\,E[0,\lambda]\|\le\lambda$, so $S$ is a norm limit of finite-rank operators. Finally, for compact $S$ the spectral theorem for compact self-adjoint operators provides an orthonormal basis of $(\ker S)^\perp$ of eigenvectors $x_n$ with eigenvalues $\mu_1\ge\mu_2\ge\dots>0$ (with multiplicity); then $E(\lambda,\infty)H=\operatorname{span}\{x_n:\mu_n>\lambda\}$, so $N_S(\lambda)=\#\{n:\mu_n>\lambda\}$ and, by Lemma \ref{lemma-lambda}(a), $\lambda_n(S)=\mu_n$.
\end{proof}

\subsection{Bessel families}
\label{section-bessel}

We recall the definitions of Bessel families and frames, in the generality of measure spaces.

\begin{definition}\label{definition-bessel}
Let $H$ be a separable Hilbert space and $(\Omega,\mu)$ a measure space. A map $f:\Omega\to H$ is \textbf{weakly measurable} if $\omega\mapsto\langle x,f(\omega)\rangle$ is measurable for every $x\in H$. It is a \textbf{Bessel family}, $f\in\mathcal B(\Omega,H)$, if the \textbf{analysis operator}
\[T_f:H\to L^2(\Omega,\mu),\qquad T_fx=(\langle x,f(\omega)\rangle)_{\omega\in\Omega}\]
is a well-defined bounded operator. For $\Omega=\NN $ with counting measure one recovers the Bessel sequences: $\sum_n|\langle x,f_n\rangle|^2\le B\|x\|^2$. The \textbf{frame operator} is $S_f=T_f^*T_f$, the \textbf{seminorm} of $f$ is $\rho(f)(x)=\|T_fx\|$, and $f$ is a \textbf{frame} if $T_f$ is bounded below, that is, $A\|x\|^2\le\int_\Omega|\langle x,f(\omega)\rangle|^2d\mu\le B\|x\|^2$ for some $0<A\le B$. We write $k(f)=\dim\ker T_f$, $N_f=N_{S_f}$, $d_j(f)=d_j(S_f)$, $\lambda_n(f)=\lambda_n(S_f)$ and $s_n(f)=\lambda_n(f)^{1/2}$, the \textbf{singular numbers} of $T_f$. We say that $f$ is \textbf{compact}, of \textbf{rank} $r$, \textbf{upper semi-Fredholm}, if $T_f$ is.
\end{definition}

We collect the basic properties of Bessel families.

\begin{lemma}\label{lemma-basic}
Let $f\in\mathcal B(\Omega,H)$.
\begin{enumerate}[(a)]
\item $\rho(f)$ is a seminorm with $\rho(f)(x)^2=\langle S_fx,x\rangle=\int_\Omega|\langle x,f(\omega)\rangle|^2d\mu(\omega)$ and $\rho(f)=\|S_f^{1/2}\,\cdot\|$; it is bounded by $\|T_f\|\,\|\cdot\|$ and vanishes exactly on $\ker T_f=\ker S_f$.
\item $T_f$ is compact iff $S_f$ is compact; $\rk T_f=\rk S_f$ and $s_n(f)$ are the singular numbers of $T_f$ in the usual sense, so that $\sum_ns_n(f)^p=\|T_f\|_{p}^p$ is the $p$-th Schatten norm to the power $p$.
\item Every bounded positive operator $S$ on $H$ is the frame operator of a Bessel family: for an orthonormal basis $(e_i)_{i\in I}$ of $H$ ($I$ countable, with counting measure) the family $f(i)=S^{1/2}e_i$ satisfies $S_f=S$.
\end{enumerate}
\end{lemma}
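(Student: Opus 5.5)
For (a), I would start from the definitions. Fubini-type manipulation is not needed: for $x\in H$ one has $\rho(f)(x)^2=\|T_fx\|^2=\langle T_f^*T_fx,x\rangle=\langle S_fx,x\rangle$. The integral formula is just the definition of the $L^2$ norm applied to $T_fx=(\langle x,f(\omega)\rangle)_\omega$. Since $S_f\ge0$, its square root satisfies $\|S_f^{1/2}x\|^2=\langle S_fx,x\rangle$, which gives $\rho(f)=\|S_f^{1/2}\,\cdot\|$. The seminorm properties follow because $\rho(f)$ is the composition of a linear map with a norm, and the bound comes from $\|T_fx\|\le\|T_f\|\|x\|$. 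For the kernel, note that $T_fx=0$ gives $S_fx=0$; conversely, $S_fx=0$ gives $\|T_fx\|^2=\langle S_fx,x\rangle=0$. So $\ker T_f=\ker S_f$, and this is exactly the zero set of $\rho(f)$.

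For (b), I would use the polar decomposition $T_f=V|T_f|$ with $|T_f|=S_f^{1/2}$ and $V$ a partial isometry. If $S_f$ is compact, then so is $S_f^{1/2}$: its counting function satisfies $N_{S^{1/2}}(\lambda)=N_S(\lambda^2)<\infty$, and Lemma \ref{lemma-compact} applies. Hence $T_f=VS_f^{1/2}$ is compact. Conversely, if $T_f$ is compact, then $S_f=T_f^*T_f$ is compact. For the rank, I would use $\ker T_f=\ker S_f$ from (a): it gives $\overline{\ran T_f^*}=\overline{\ran S_f}$, both equal to $(\ker S_f)^\perp$. Since $\dim\overline{\ran T_f}=\dim(\ker T_f)^\perp$, via the unitary $V$ from $(\ker T_f)^\perp$ onto $\overline{\ran T_f}$, both ranks equal $\dim(\ker S_f)^\perp$. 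The usual singular numbers of $T_f$ are the eigenvalues of $|T_f|=S_f^{1/2}$ counted with multiplicity. Since $E_{S^{1/2}}$ is the image of $E_S$ under $t\mapsto t^{1/2}$, we get $\lambda_n(S_f^{1/2})=\lambda_n(S_f)^{1/2}=s_n(f)$. The Schatten identity is then the definition $\|T\|_p^p=\sum s_n(T)^p$. In the noncompact case one just checks that the definitions agree, because both are defined through the spectrum of $|T_f|$.

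For (c), let $f(i)=S^{1/2}e_i$. Then $\langle x,f(i)\rangle=\langle S^{1/2}x,e_i\rangle$, and Parseval gives $\sum_i|\langle x,f(i)\rangle|^2=\|S^{1/2}x\|^2=\langle Sx,x\rangle\le\|S\|\|x\|^2$. So $f$ is Bessel, and measurability is trivial for counting measure. This shows $\langle S_fx,x\rangle=\langle Sx,x\rangle$ for all $x$. Since a self-adjoint operator is determined by its quadratic form, it follows that $S_f=S$.

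The only mildly delicate point is the bookkeeping in (b): identifying $\lambda_n(S^{1/2})$ with $\lambda_n(S)^{1/2}$ through the counting functions, and matching the paper's $\lambda_n$ with the classical singular numbers. Both reduce to Lemma \ref{lemma-lambda} and Lemma \ref{lemma-compact}.
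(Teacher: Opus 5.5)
Your proof is correct. Parts (a) and (c) follow the paper's argument: the identity $\|T_fx\|^2=\langle S_fx,x\rangle$ for (a), and Parseval applied to $f(i)=S^{1/2}e_i$ for (c).

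In (b) you take a different route for the implication ``$S_f$ compact $\Rightarrow$ $T_f$ compact''. You write $T_f=VS_f^{1/2}$ by polar decomposition and get compactness of $S_f^{1/2}$ from $N_{S_f^{1/2}}(\lambda)=N_{S_f}(\lambda^2)$ and Lemma \ref{lemma-compact}. The paper instead uses the sequential characterization of compactness on Hilbert spaces: if $x_n\to0$ weakly, then $S_fx_n\to0$ in norm, so $\|T_fx_n\|^2=\langle S_fx_n,x_n\rangle\to0$.

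The paper's argument is shorter. It needs neither the polar decomposition nor the spectral mapping $E_{S^{1/2}}(B)=E_S(\{t\ge0:t^{1/2}\in B\})$. You should justify that identity, for instance by the uniqueness part of Theorem \ref{theorem-spectral}.

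Your route has two compensating advantages. It produces exactly the identity you then reuse to get $\lambda_n(S_f^{1/2})=\lambda_n(S_f)^{1/2}$. It also gives an explicit argument for $\rk T_f=\rk S_f$, via $(\ker T_f)^\perp=(\ker S_f)^\perp$ and the initial and final spaces of $V$; the paper's proof leaves that point implicit.
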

\begin{proof}
(a) Since $\langle S_fx,x\rangle=\|T_fx\|^2=\int_\Omega|\langle x,f(\omega)\rangle|^2d\mu(\omega)$, the function $\rho(f)$ is the $L^2$-norm composed with the linear map $T_f$, hence a seminorm; it is bounded by $\|T_f\|$, equals $\|S_f^{1/2}\cdot\|$, and vanishes iff $T_fx=0$ iff $S_f^{1/2}x=0$ iff $S_fx=0$ (that is, $x$ is orthogonal to $f(\omega)$ for almost every $\omega$).
(b) If $T_f$ is compact so is $S_f=T_f^*T_f$. Conversely, if $S_f$ is compact and $x_n\to0$ weakly then $\|T_fx_n\|^2=\langle S_fx_n,x_n\rangle\to0$ (because $S_fx_n\to0$ in norm and $(x_n)$ is bounded), so $T_f$ is compact. The singular numbers of $T_f$ are the square roots of the eigenvalues of $T_f^*T_f$, which is the definition used here (Lemma \ref{lemma-compact} gives the spectral decomposition in the compact case).
(c) $\langle x,f(i)\rangle=\langle S^{1/2}x,e_i\rangle$, so by Parseval $\|T_fx\|^2=\sum_i|\langle S^{1/2}x,e_i\rangle|^2=\|S^{1/2}x\|^2=\langle Sx,x\rangle$; hence $T_f$ is bounded and $S_f=S$.
\end{proof}

\subsection{The comparison relations}
\label{section-relations}

We now define the classes of comparison operators.

\begin{definition}\label{definition-classes}
Let $H,\tilde H$ be separable Hilbert spaces. A bounded operator $w:\tilde H\to H$ is called
\begin{itemize}
\item \textbf{injective} if $\ker w=0$;
\item \textbf{bounded below} if $\|wy\|\ge\beta\|y\|$ for some $\beta>0$ and all $y$ (equivalently, $w$ is injective with closed range, by the open mapping theorem);
\item an \textbf{isometry} if $\|wy\|=\|y\|$ for all $y$;
\item \textbf{invertible} if $w$ is bijective (then $w^{-1}$ is bounded);
\item \textbf{unitary} if $w$ is a surjective isometry.
\end{itemize}
We write $\mathcal O_\mathrm{all}\supseteq\mathcal O_\mathrm{inj}\supseteq\mathcal O_\mathrm{bb}\supseteq\mathcal O_\mathrm{iso}$ for the classes of all bounded, injective, bounded below and isometric operators, and $\mathcal O_\mathrm{bb}\supseteq\mathcal O_\mathrm{inv}\supseteq\mathcal O_\mathrm{uni}$ for the invertible and unitary operators ($\mathcal O_\mathrm{uni}\subseteq\mathcal O_\mathrm{iso}$ too). Each class contains the identity operators and is closed under composition.
\end{definition}

\begin{remark}\label{remark-cl}
The subscript ``bb'' stands for \emph{bounded below}: by the open mapping theorem the bounded below operators are exactly the injective operators with closed range, that is, the isomorphisms onto closed subspaces. The class $\mathcal O_\mathrm{bb}$ is closed under composition, whereas the class of all operators with closed range is not, which is why injectivity is built in.
\end{remark}

These classes give rise to the comparison relations.

\begin{definition}\label{definition-relation}
For seminorms $\phi,\psi$ on a space write $\phi\asymp\psi$ if $c\psi\le\phi\le C\psi$ for some constants $0<c\le C<\infty$. Let $\mathcal O$ be one of the six classes above, or more generally any class of bounded operators between Hilbert spaces which contains the identities and is closed under composition (such classes are studied in Section \ref{section-further}). For Bessel families $f\in\mathcal B(\Omega,H)$ and $\tilde f\in\mathcal B(\tilde\Omega,\tilde H)$ we write $f\simeq_{\mathcal O}\tilde f$ if there are operators $w:\tilde H\to H$ and $\tilde w:H\to\tilde H$ in $\mathcal O$ with
\[\rho(f)\circ w\asymp\rho(\tilde f)\qquad\text{and}\qquad\rho(\tilde f)\circ\tilde w\asymp\rho(f).\]
Here $\rho(f)\circ w$ is the seminorm $y\mapsto\rho(f)(wy)=\|T_fwy\|$ on $\tilde H$. We write $\simeq=\simeq_\mathrm{all}$, and $\simeq_\mathrm{inj},\simeq_\mathrm{bb},\simeq_\mathrm{iso},\simeq_\mathrm{inv},\simeq_\mathrm{uni}$ for the other relations. Only the spaces $H,\tilde H$ and the operators $S_f,S_{\tilde f}$ matter; the index spaces $\Omega,\tilde\Omega$ are unrelated.
\end{definition}

The comparison relations depend only on the frame operators, and they are equivalence relations.

\begin{proposition}\label{proposition-reduction}
Let $\mathcal O$ be one of the six classes, or any class of bounded operators containing the identities and closed under composition.
\begin{enumerate}[(a)]
\item $f\simeq_{\mathcal O}\tilde f$ iff there are $w:\tilde H\to H$ and $\tilde w:H\to\tilde H$ in $\mathcal O$ and constants $0<c\le C$ with
\[c\,S_{\tilde f}\le w^*S_fw\le C\,S_{\tilde f}\qquad\text{and}\qquad c\,S_f\le\tilde w^*S_{\tilde f}\tilde w\le C\,S_f.\]
In particular $\simeq_{\mathcal O}$ depends only on the frame operators; every bounded positive operator occurs as a frame operator (Lemma \ref{lemma-basic}(c)), so $\simeq_{\mathcal O}$ is a relation on bounded positive operators on separable spaces.
\item $\simeq_{\mathcal O}$ is an equivalence relation.
\end{enumerate}
\end{proposition}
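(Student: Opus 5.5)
For part (a), I would translate the seminorm domination into a Loewner inequality. By Lemma \ref{lemma-basic}(a), $\rho(f)(wy)^2=\|T_fwy\|^2=\langle S_fwy,wy\rangle=\langle w^*S_fwy,y\rangle$ for $y\in\tilde H$, and $\rho(\tilde f)(y)^2=\langle S_{\tilde f}y,y\rangle$. Hence $c\,\rho(\tilde f)\le\rho(f)\circ w\le C\rho(\tilde f)$ holds if and only if $c^2S_{\tilde f}\le w^*S_fw\le C^2S_{\tilde f}$, because a self-adjoint operator inequality is exactly an inequality between quadratic forms. The same computation applies to $\tilde w$. The two pairs of constants can be merged into a single pair by taking the smaller lower constant and the larger upper constant. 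The condition now involves only $H,\tilde H,S_f,S_{\tilde f}$. Together with Lemma \ref{lemma-basic}(c), which realizes every bounded positive operator as a frame operator, this shows that $\simeq_{\mathcal O}$ is a relation on positive operators.

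For part (b), I would work with the operator form from (a) throughout.

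\emph{Reflexivity.} Take $w=\tilde w=\mathrm{id}_H$, which lies in $\mathcal O$ by hypothesis, and $c=C=1$.

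\emph{Symmetry.} The definition is symmetric under exchanging $(f,w)$ with $(\tilde f,\tilde w)$.

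\emph{Transitivity.} Suppose $f\simeq_{\mathcal O}g$ via $w_1:H_g\to H$, $\tilde w_1:H\to H_g$, and $g\simeq_{\mathcal O}h$ via $w_2:H_h\to H_g$, $\tilde w_2:H_g\to H_h$. Put $w=w_1w_2$ and $\tilde w=\tilde w_2\tilde w_1$; both lie in $\mathcal O$ because the class is closed under composition. Then
\[w^*S_fw=w_2^*(w_1^*S_fw_1)w_2 .\]
Conjugation $A\mapsto v^*Av$ preserves the Loewner order, since $\langle v^*Avy,y\rangle=\langle Avy,vy\rangle$. So from $c_1S_g\le w_1^*S_fw_1\le C_1S_g$ we get
\[c_1\,w_2^*S_gw_2\le w^*S_fw\le C_1\,w_2^*S_gw_2 ,\]
and then $c_2S_h\le w_2^*S_gw_2\le C_2S_h$ gives $c_1c_2S_h\le w^*S_fw\le C_1C_2S_h$. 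The argument for $\tilde w$ is the same. Equivalently, at the level of seminorms, $\phi\asymp\psi$ implies $\phi\circ v\asymp\psi\circ v$, and $\asymp$ is transitive.

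I do not expect a real obstacle. The only points needing care are the positivity-preserving property of conjugation and the closure of $\mathcal O$ under composition. The latter holds for the six classes: compositions of injective, bounded below, isometric, invertible or unitary operators are again of the same type, as noted after Definition \ref{definition-classes}.
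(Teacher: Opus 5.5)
Your proposal is correct and matches the paper's proof essentially step for step: part (a) uses the identity $\rho(f)(wy)^2=\langle w^*S_fwy,y\rangle$ and merges the constants in the same way. In part (b), your transitivity argument via conjugation $A\mapsto v^*Av$ preserving the Loewner order is just the operator form of the paper's observation that pulling back along $v$ preserves $\asymp$, together with closure of $\mathcal O$ under composition.
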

\begin{proof}
(a) Since $\rho(f)(wy)^2=\langle S_fwy,wy\rangle=\langle w^*S_fwy,y\rangle$ and $\rho(\tilde f)(y)^2=\langle S_{\tilde f}y,y\rangle$, the condition $c_1\rho(\tilde f)\le\rho(f)\circ w\le C_1\rho(\tilde f)$ is equivalent to $c_1^2S_{\tilde f}\le w^*S_fw\le C_1^2S_{\tilde f}$. Put $c=c_1^2$, $C=C_1^2$, taking the minimum of the two lower constants and the maximum of the two upper constants over the two comparisons.
(b) Reflexivity: take $w=\tilde w=\mathrm{id}$. Symmetry: the definition is symmetric in $(f,w)$ and $(\tilde f,\tilde w)$. Transitivity: let $w,\tilde w$ realize $f\simeq_{\mathcal O}\tilde f$ and $v,\tilde v$ realize $\tilde f\simeq_{\mathcal O}\hat f$, with $v:\hat H\to\tilde H$ and $\tilde v:\tilde H\to\hat H$. Pulling back along $v$ preserves $\asymp$, so $\rho(f)\circ(wv)=(\rho(f)\circ w)\circ v\asymp\rho(\tilde f)\circ v\asymp\rho(\hat f)$, and likewise $\rho(\hat f)\circ(\tilde v\tilde w)\asymp\rho(\tilde f)\circ\tilde w\asymp\rho(f)$. Since $\mathcal O$ is closed under composition, $wv$ and $\tilde v\tilde w$ lie in $\mathcal O$, so $f\simeq_{\mathcal O}\hat f$.
\end{proof}

The inclusions between the classes of comparison operators give inclusions between the relations.

\begin{proposition}\label{proposition-inclusions}
${\simeq_\mathrm{uni}}\subseteq{\simeq_\mathrm{inv}}\subseteq{\simeq_\mathrm{bb}}\subseteq{\simeq_\mathrm{inj}}\subseteq{\simeq}$ and ${\simeq_\mathrm{uni}}\subseteq{\simeq_\mathrm{iso}}\subseteq{\simeq_\mathrm{bb}}$.
\end{proposition}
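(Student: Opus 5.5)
The inclusions follow from the inclusions between the classes of comparison operators recorded in Definition \ref{definition-classes}, together with the observation that the relation $\simeq_{\mathcal O}$ is monotone in $\mathcal O$. Concretely, I will first note the general principle. Suppose $\mathcal O\subseteq\mathcal O'$ are two classes, both containing the identities and closed under composition. If $f\simeq_{\mathcal O}\tilde f$, the witnessing operators $w:\tilde H\to H$ and $\tilde w:H\to\tilde H$ in $\mathcal O$ also lie in $\mathcal O'$. The defining conditions $\rho(f)\circ w\asymp\rho(\tilde f)$ and $\rho(\tilde f)\circ\tilde w\asymp\rho(f)$ of Definition \ref{definition-relation} do not refer to the class at all, so $f\simeq_{\mathcal O'}\tilde f$. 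Hence ${\simeq_{\mathcal O}}\subseteq{\simeq_{\mathcal O'}}$.

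It then remains to check the inclusions of classes $\mathcal O_\mathrm{uni}\subseteq\mathcal O_\mathrm{inv}\subseteq\mathcal O_\mathrm{bb}\subseteq\mathcal O_\mathrm{inj}\subseteq\mathcal O_\mathrm{all}$ and $\mathcal O_\mathrm{uni}\subseteq\mathcal O_\mathrm{iso}\subseteq\mathcal O_\mathrm{bb}$. Each is immediate:
\begin{itemize}
\item A unitary operator is a bijective isometry, hence invertible. It is also an isometry by definition.
\item An invertible $w$ has bounded inverse by the open mapping theorem, so $\|wy\|\ge\|w^{-1}\|^{-1}\|y\|$ and $w$ is bounded below.
\item An isometry is bounded below with $\beta=1$.
\item A bounded below operator is injective, since $wy=0$ forces $\|y\|\le\beta^{-1}\|wy\|=0$.
\item Injective operators are bounded by definition.
\end{itemize}

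There is no real obstacle here. The only point requiring a word is the invertible-implies-bounded-below step, which uses the boundedness of $w^{-1}$ already noted in Definition \ref{definition-classes}. Applying the monotonicity principle to each inclusion yields both chains in the statement.
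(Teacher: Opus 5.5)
Your proposal is correct and follows the paper's own proof: $\simeq_{\mathcal O}$ is monotone in $\mathcal O$, because a witnessing pair $(w,\tilde w)$ in $\mathcal O$ also witnesses $\simeq_{\mathcal O'}$, and this is combined with the elementary inclusions between the operator classes. Your step-by-step justification of each class inclusion, for instance that an invertible $w$ is bounded below via $\|wy\|\ge\|w^{-1}\|^{-1}\|y\|$, just spells out what the paper calls clear.
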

\begin{proof}
If $\mathcal O\subseteq\mathcal O'$, any pair $(w,\tilde w)$ realizing $\simeq_{\mathcal O}$ also realizes $\simeq_{\mathcal O'}$. The inclusions of classes are clear from Definition \ref{definition-classes}: unitary operators are invertible isometries, isometries and invertible operators are bounded below, and bounded below operators are injective.
\end{proof}

\section{Arbitrary comparison operators: counting functions and singular numbers}
\label{section-all}

\subsection{A necessary condition}

We first establish a necessary condition, in terms of counting functions, for the existence of a comparison operator.

\begin{proposition}\label{proposition-counting}
Let $S\ge0$ and $\tilde S\ge0$ be bounded operators on $H$ and $\tilde H$, and let $w:\tilde H\to H$ be bounded, $w\ne0$. If $K_1^2\tilde S\le w^*Sw$ for some $K_1>0$ (that is, $K_1\rho(\tilde f)\le\rho(f)\circ w$ for $S=S_f$, $\tilde S=S_{\tilde f}$), then
\[N_{\tilde S}(\lambda)\le N_S\bigl(K_1^2\lambda/\|w\|^2\bigr)\qquad\text{for all }\lambda>0.\]
\end{proposition}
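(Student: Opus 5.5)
The plan is a min–max type argument that transports a subspace where $\tilde S$ is large into $H$ through $w$. Fix $\lambda>0$ and put $\mu=K_1^2\lambda/\|w\|^2$. It suffices to show that for every finite $n\le N_{\tilde S}(\lambda)$ one has $N_S(\mu)\ge n$; the case $N_{\tilde S}(\lambda)=\infty$ then follows by letting $n\to\infty$.

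Given such $n$, choose an $n$-dimensional subspace $V\subseteq E_{\tilde S}(\lambda,\infty)\tilde H$. By Lemma \ref{lemma-spectral-subspaces}(d), every nonzero $y\in V$ satisfies $\langle\tilde Sy,y\rangle>\lambda\|y\|^2$. The hypothesis then gives
\[\langle Swy,wy\rangle=\langle w^*Swy,y\rangle\ge K_1^2\langle\tilde Sy,y\rangle>K_1^2\lambda\|y\|^2\ge\mu\|wy\|^2 .\]
In particular $wy\ne0$, so $w|_V$ is injective and $wV\subseteq H$ has dimension $n$. Moreover $\langle Sx,x\rangle>\mu\|x\|^2$ for all nonzero $x\in wV$.

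Next I conclude that $\dim E_S(\mu,\infty)H\ge n$, using the projection trick from the proof of Lemma \ref{lemma-lambda}(a). Suppose instead that $\dim E_S(\mu,\infty)H<n$. Then the orthogonal projection of $wV$ onto $E_S(\mu,\infty)H$ has a nonzero kernel vector $x\in wV$. This $x$ lies in $E_S[0,\mu]H$, so Lemma \ref{lemma-spectral-subspaces}(c) gives $\langle Sx,x\rangle\le\mu\|x\|^2$, contradicting the strict inequality above. Hence $N_S(\mu)\ge n$.

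The only delicate point is keeping the inequality strict through the chain. This is what forces the bound at $\mu$ itself rather than at some slightly smaller value. It works because the strictness comes from $\langle\tilde Sy,y\rangle>\lambda\|y\|^2$, while $\|wy\|\le\|w\|\|y\|$ is used as a non-strict bound; the assumption $w\ne0$ is needed only to make $\mu$ well defined.
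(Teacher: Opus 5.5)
Your proof is correct and is essentially the paper's argument: the paper runs the same strict-inequality chain and uses Lemma \ref{lemma-spectral-subspaces}(c) to show that $E_S(\mu,\infty)w$ is injective on $E_{\tilde S}(\lambda,\infty)\tilde H$. The only difference is that the paper applies this injectivity directly to the whole, possibly infinite-dimensional, spectral subspace, instead of passing through $n$-dimensional subspaces and letting $n\to\infty$.
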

\begin{proof}
Put $W=E_{\tilde S}(\lambda,\infty)\tilde H$, $\mu=K_1^2\lambda/\|w\|^2$ and $Q=E_S(\mu,\infty)$. For $0\ne y\in W$ one has $\langle\tilde Sy,y\rangle>\lambda\|y\|^2$ by Lemma \ref{lemma-spectral-subspaces}(d): the spectral measure of $y$ is a nonzero measure carried by $(\lambda,\infty)$, so the inequality is strict. For $x=wy$ we have $\|x\|\le\|w\|\|y\|$, hence
\[\langle Sx,x\rangle\ge K_1^2\langle\tilde Sy,y\rangle>K_1^2\lambda\|y\|^2\ge\mu\|x\|^2 .\]
In particular $x\ne0$. Moreover $Qx\ne0$: otherwise $x=(1-Q)x\in E_S[0,\mu]H$ (as $E_S$ is carried by $[0,\infty)$), and Lemma \ref{lemma-spectral-subspaces}(c) would give $\langle Sx,x\rangle\le\mu\|x\|^2$. So $Qw|_W:W\to E_S(\mu,\infty)H$ is injective and linear, and an injective linear map cannot increase dimension: $N_{\tilde S}(\lambda)=\dim W\le N_S(\mu)$.
\end{proof}

\subsection{Two operator lemmas}

We shall use the following factorization lemma of Douglas \cite{Douglas1966}; we include its short proof.

\begin{lemma}\label{lemma-douglas}
If $AA^*\le\kappa BB^*$ for bounded operators $A:Y\to X$, $B:Z\to X$ and $\kappa>0$, then $A=BC$ for some bounded $C:Y\to Z$ with $\|C\|^2\le\kappa$.
\end{lemma}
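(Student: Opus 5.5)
The plan is the standard proof of Douglas' lemma: first define $C^*$ on the range of $B^*$, then extend it by continuity and by zero, and finally take adjoints. The hypothesis $AA^*\le\kappa BB^*$ is equivalent to
\[\|A^*x\|^2=\langle AA^*x,x\rangle\le\kappa\langle BB^*x,x\rangle=\kappa\|B^*x\|^2\qquad(x\in X).\]
On the subspace $\ran B^*\subseteq Z$ we set $D(B^*x)=A^*x$. This is well defined, because $B^*x=B^*x'$ gives $\|A^*(x-x')\|^2\le\kappa\|B^*(x-x')\|^2=0$. It is linear, and the same inequality gives $\|D(B^*x)\|\le\kappa^{1/2}\|B^*x\|$.

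Since $D$ is bounded and $Y$ is complete, $D$ extends uniquely to a bounded operator on $\overline{\ran B^*}$ with the same bound. We then extend it by $0$ on $(\ran B^*)^\perp=\ker B$. This gives a bounded $D:Z\to Y$ with $\|D\|\le\kappa^{1/2}$; the bound survives because the two pieces act on orthogonal subspaces, so $\|Dz\|=\|Dz_1\|\le\kappa^{1/2}\|z_1\|\le\kappa^{1/2}\|z\|$, where $z_1$ is the component of $z$ in $\overline{\ran B^*}$. By construction $DB^*=A^*$.

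Put $C=D^*:Y\to Z$. Taking adjoints in $DB^*=A^*$ gives $BC=BD^*=(DB^*)^*=A$, and $\|C\|^2=\|D\|^2\le\kappa$.

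No step is a real obstacle. The only points that need care are the well-definedness of $D$, which is exactly where the hypothesis is used, and checking that the extension by zero keeps the norm bound. Over $\FF=\RR$ the argument is identical.
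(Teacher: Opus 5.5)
Your proposal is correct and follows the paper's proof essentially step for step. Both define $D$ on $\ran B^*$ by $B^*x\mapsto A^*x$, with well-definedness coming from $\|A^*x\|^2\le\kappa\|B^*x\|^2$, extend it by continuity to the closure and by $0$ on the orthogonal complement to get $\|D\|\le\sqrt\kappa$ and $DB^*=A^*$, and then take $C=D^*$.
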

\begin{proof}
The hypothesis gives $\|A^*x\|^2=\langle AA^*x,x\rangle\le\kappa\langle BB^*x,x\rangle=\kappa\|B^*x\|^2$ for all $x\in X$. Hence $B^*x\mapsto A^*x$ is a well-defined linear map on $\ran B^*$ (if $B^*x=B^*x'$ then $\|A^*(x-x')\|\le\sqrt\kappa\|B^*(x-x')\|=0$) of norm at most $\sqrt\kappa$. Extend it by continuity to $\overline{\ran B^*}$ and by $0$ to the orthogonal complement: this gives $D:Z\to Y$ with $\|D\|\le\sqrt\kappa$ and $DB^*=A^*$. Taking adjoints, $A=BD^*$, and $C=D^*$ works.
\end{proof}

Recall the notion of partial isometry.

\begin{definition}\label{definition-partial-isometry}
A bounded operator $J:\tilde H\to H$ is a \textbf{partial isometry} if $\|Jy\|=\|y\|$ for all $y\in(\ker J)^\perp$; this space is the \textbf{initial space} of $J$. Then $J$ vanishes on $\ker J$ and maps $(\ker J)^\perp$ unitarily onto its (closed) range.
\end{definition}

The next lemma embeds the spectral bands of one operator into those of another; it is the heart of the sufficiency in Theorem \ref{theorem-all}. The idea is a greedy allocation: the vectors of the band $k$ of $\tilde S$ are sent isometrically into bands of $S$ of index at most $k+m$, where $S$ is larger, and the hypothesis says exactly that there is always enough room; the only care is needed with infinite-dimensional bands, where room must be kept in reserve.

\begin{lemma}\label{lemma-embedding}
Let $\tilde S\ge0$ and $S\ge0$ be bounded operators on separable spaces $\tilde H$ and $H$, let $q>1$ and $m\in\ZZ$, and assume $N_{\tilde S}(\lambda)\le N_S(q^{-m}\lambda)$ for all $\lambda>0$. Then there is a partial isometry $J:\tilde H\to H$ with initial space $(\ker\tilde S)^\perp$ and $J\tilde SJ^*\le q^{m+1}S$.
\end{lemma}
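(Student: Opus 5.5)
We decompose both spaces into $q$-adic bands and place the bands of $\tilde S$ isometrically into bands of $S$ where $S$ is at least comparable. Put $\tilde G_k=E_{\tilde S}(q^{-k-1},q^{-k}]\tilde H$ and $G_l=E_S(q^{-l-1},q^{-l}]H$ for $k,l\in\ZZ$. As in Lemma \ref{lemma-bands}, with $2$ replaced by $q$, we have $(\ker\tilde S)^\perp=\bigoplus_k\tilde G_k$. Each $\tilde G_k$ reduces $\tilde S$, and $\tilde S\le q^{-k}$ on $\tilde G_k$. On each $G_l$ we have $S\ge q^{-l-1}$, and $E_S(q^{-n-1},\infty)H=\bigoplus_{l\le n}G_l$.

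The hypothesis with $\lambda=q^{-k-1}$ gives
\[\sum_{j\le k}\dim\tilde G_j=N_{\tilde S}(q^{-k-1})\le N_S(q^{-m-k-1})=\sum_{l\le k+m}\dim G_l .\]
This is a Hall-type condition. The goal is an orthogonal family of isometric embeddings $J_k:\tilde G_k\to\bigoplus_{l\le k+m}G_l$ with mutually orthogonal ranges. Set $J=\bigoplus_kJ_k$ on $(\ker\tilde S)^\perp$ and $J=0$ on $\ker\tilde S$. Then $J$ is a partial isometry with the required initial space, because the $\tilde G_k$ are orthogonal and the ranges are orthogonal.

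Granting the embeddings, the estimate $J\tilde SJ^*\le q^{m+1}S$ follows quickly. We have $J\tilde SJ^*=\sum_kJ_k\tilde S|_{\tilde G_k}J_k^*\le\sum_kq^{-k}P_k$, where $P_k=J_kJ_k^*$ is the projection onto $\ran J_k$. Fix $x$ and write $x_k=P_kx$; these vectors are orthogonal. Each $x_k$ lies in $F_{k+m}:=E_S(q^{-k-m-1},\infty)H$. Hence
\[q^{-k}\|x_k\|^2\le q^{m+1}\langle Sx_k,x_k\rangle .\]
What remains is to pass from $\sum_k\langle Sx_k,x_k\rangle$ to $\langle Sx,x\rangle$, and cross terms can break this. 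To avoid them, we choose the ranges to be spans of pieces adapted to the spectral decomposition of $S$: each $\ran J_k$ should be a direct sum of subspaces $R_{k,l}\subseteq G_l$ with $l\le k+m$. The $R_{k,l}$ are then pairwise orthogonal and each reduces nothing problematic.

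With this choice, $\sum_kq^{-k}P_k=\sum_{l}\sum_{k\ge l-m}q^{-k}Q_{k,l}$, where $Q_{k,l}$ is the projection onto $R_{k,l}\subseteq G_l$. On $G_l$ this is at most $q^{-l+m}Q_l$, where $Q_l=E_S(G_l)$. Since $S\ge q^{-l-1}$ on $G_l$ and $S$ commutes with $Q_l$, we get $q^{-l+m}Q_l\le q^{m+1}SQ_l$. Summing over $l$ gives the bound, using $\sum_lSQ_l\le S$. Here $S$ is block diagonal over the $G_l$, so no cross terms appear.

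The main obstacle is building the allocation $R_{k,l}$, a greedy filling that must respect infinite-dimensional bands. Process $k$ in increasing order; this is well founded since $\tilde G_k=0$ for $k$ very negative, as $\tilde S$ is bounded. For a given $k$, we fill $\tilde G_k$ using the unused parts of $G_l$ for $l\le k+m$, taking the largest admissible $l$ first, that is, the smallest eigenvalues. This keeps the big-eigenvalue room in reserve only when it is needed later, though any order works for finite counts.

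The dimension inequality guarantees enough room when all partial sums are finite. When some $\sum_{l\le k+m}\dim G_l=\infty$, we keep reserve in a different way. Split every infinite-dimensional $G_l$ into countably many infinite-dimensional pieces, indexed by $k\ge l-m$. Later bands $\tilde G_{k'}$ then always find an unused infinite-dimensional piece in that same $G_l$, since such a $G_l$ is admissible for all $k'\ge l-m$.

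Concretely, if $N_S(q^{-k-m-1})=\infty$, let $l_0\le k+m$ be some index with $\dim G_{l_0}=\infty$. We embed $\tilde G_k$ into the piece of $G_{l_0}$ reserved for $k$. If the count is finite, the greedy filling in the finite-dimensional bands succeeds by the inequality. The two regimes are compatible: the reserved pieces never interfere with the finite greedy allocation, because once the count is infinite for some $k$, it stays infinite for all larger $k$.
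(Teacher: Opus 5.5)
Your proposal is correct and follows essentially the paper's proof. It uses the same $q$-adic bands, the same Hall-type inequality obtained from the hypothesis at $\lambda=q^{-k-1}$, an inductive allocation of band-adapted pieces $R_{k,l}\subseteq G_l$ with $l\le k+m$, and the same band-by-band estimate that avoids cross terms. The only variation is how you keep room in infinite-dimensional bands: you split each such band in advance into infinite-dimensional pieces indexed by $k$, whereas the paper splits the unused part as $R_{j^*}=A\oplus B$ at each step. The existence of an infinite band $l_0\le k+m$ when the count is infinite, which you leave implicit, follows as in the paper from $G_l=0$ for $l$ very negative, since $S$ is bounded.
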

\begin{proof}
\emph{Step 1: bands and the hypothesis.} For $j\in\ZZ$ let $H_j=E_S(q^{-j-1},q^{-j}]H$ and $\tilde H^k=E_{\tilde S}(q^{-k-1},q^{-k}]\tilde H$, with dimensions $d_j,\tilde d_k$. As in Lemma \ref{lemma-bands} (with $2$ replaced by $q$), $H=\ker S\oplus\bigoplus_jH_j$ and $\tilde H=\ker\tilde S\oplus\bigoplus_k\tilde H^k$ orthogonally, all bands reduce the operators, $q^{-j-1}\|x\|^2<\langle Sx,x\rangle\le q^{-j}\|x\|^2$ on $H_j$, and $\langle\tilde Sy,y\rangle\le q^{-k}\|y\|^2$ on $\tilde H^k$. Since $S,\tilde S$ are bounded, $d_j=\tilde d_j=0$ for all $j$ below some $j_0$. Put $G_n=\bigoplus_{j\le n}H_j=E_S(q^{-n-1},\infty)H$. Then $\dim G_n=N_S(q^{-n-1})$ and $\sum_{j\le k}\tilde d_j=N_{\tilde S}(q^{-k-1})$. The hypothesis at $\lambda=q^{-k-1}$ reads
\[\sum_{j\le k}\tilde d_j\le\dim G_{k+m}\qquad\text{for all }k\in\ZZ.\tag{$*$}\]
\emph{Step 2: allocation.} We choose, by induction on $k$, pairwise orthogonal subspaces $X_k\subseteq G_{k+m}$ with $\dim X_k=\tilde d_k$, together with decompositions $X_k=\bigoplus_{j\le k+m}X_{k,j}$, $X_{k,j}\subseteq H_j$. For $k<j_0$ put $X_k=0$. Let $k\ge j_0$ and assume $X_{k'}$ has been chosen for $k'<k$. Put $R_j=H_j\ominus\bigoplus_{k'<k}X_{k',j}$ (the part of $H_j$ not yet used) and assume the invariant
\[(I_k)\qquad H_j\text{ infinite-dimensional}\implies R_j\text{ infinite-dimensional, for every }j.\]
It holds for $k=j_0$ since then $R_j=H_j$. Let $\nu=\dim G_{k+m}=\sum_{j\le k+m}d_j$.

\emph{Case $\nu=\infty$.} A finite sum of finite dimensions is finite, so some band $H_{j^*}$ with $j^*\le k+m$ is infinite-dimensional, and $R_{j^*}$ is infinite-dimensional by $(I_k)$. Split $R_{j^*}=A\oplus B$ with $A,B$ infinite-dimensional, and choose $X_{k,j^*}\subseteq A$ of dimension $\tilde d_k$ (possible since $\tilde d_k\le\infty=\dim A$ in a separable space), and $X_{k,j}=0$ for $j\ne j^*$. Then $R_{j^*}$ is replaced by a subspace containing $B$, so $(I_{k+1})$ holds.

\emph{Case $\nu<\infty$.} Then all bands $H_j$ with $j\le k+m$ are finite-dimensional, and all earlier $X_{k'}$ lie in $G_{k'+m}\subseteq G_{k+m}$, so the unused part $\bigoplus_{j\le k+m}R_j$ of $G_{k+m}$ has dimension $\nu-\sum_{k'<k}\tilde d_{k'}$, which is at least $\tilde d_k$ by $(*)$. Choose $X_{k,j}\subseteq R_j$ ($j\le k+m$) of total dimension $\tilde d_k$. Only finite-dimensional bands are touched, so $(I_{k+1})$ holds.

By construction the $X_k$ are pairwise orthogonal: $X_{k,j}\subseteq R_j$ is orthogonal to the earlier $X_{k',j}$, and different bands are orthogonal.

\emph{Step 3: the operator $J$.} Since $\dim\tilde H^k=\dim X_k$, there are unitaries $J_k:\tilde H^k\to X_k$. Put $J=\bigoplus_kJ_k$ on $\bigoplus_k\tilde H^k=(\ker\tilde S)^\perp$ and $J=0$ on $\ker\tilde S$. It is a partial isometry with initial space $(\ker\tilde S)^\perp$, and $J^*x=\sum_kJ_k^{-1}P_{X_k}x$.

\emph{Step 4: the estimate.} Let $x\in H$, $E_j=E_S(q^{-j-1},q^{-j}]$, and $\Pi_{k,j}$ the orthogonal projection onto $X_{k,j}$. As $\tilde S$ is bounded by $q^{-k}$ on $\tilde H^k$ and these spaces are mutually orthogonal and reduce $\tilde S$,
\[\langle J\tilde SJ^*x,x\rangle=\langle\tilde SJ^*x,J^*x\rangle=\sum_k\langle\tilde SJ_k^{-1}P_{X_k}x,J_k^{-1}P_{X_k}x\rangle\le\sum_kq^{-k}\|P_{X_k}x\|^2 .\]
Now $\|P_{X_k}x\|^2=\sum_{j\le k+m}\|\Pi_{k,j}E_jx\|^2$, because $X_{k,j}\subseteq H_j$, and for $j\le k+m$ we have $q^{-k}\le q^{m-j}=q^{m+1}q^{-j-1}$. Exchanging the sums, and using that for fixed $j$ the spaces $X_{k,j}$ ($k\in\ZZ$) are pairwise orthogonal subspaces of $H_j$, so $\sum_k\|\Pi_{k,j}E_jx\|^2\le\|E_jx\|^2$, we get
\[\langle J\tilde SJ^*x,x\rangle\le q^{m+1}\sum_jq^{-j-1}\|E_jx\|^2\le q^{m+1}\sum_j\langle SE_jx,E_jx\rangle\le q^{m+1}\langle Sx,x\rangle,\]
where we used $\langle Sz,z\rangle\ge q^{-j-1}\|z\|^2$ on $H_j$, and that the bands are orthogonal and reduce $S$ (so $\sum_j\langle SE_jx,E_jx\rangle\le\langle Sx,x\rangle$, with equality since $\ker S$ contributes $0$). Hence $J\tilde SJ^*\le q^{m+1}S$.
\end{proof}

\subsection{Classification by counting functions and singular numbers}

We now prove the classification for bounded comparison operators.

\begin{theorem}\label{theorem-all}
For Bessel families $f\in\mathcal B(\Omega,H)$, $\tilde f\in\mathcal B(\tilde\Omega,\tilde H)$ with $S=S_f$, $\tilde S=S_{\tilde f}$, the following are equivalent:
\begin{enumerate}[(i)]
\item $f\simeq\tilde f$;
\item there are bounded $w:\tilde H\to H$ and $\tilde w:H\to\tilde H$ with $\rho(f)\circ w=\rho(\tilde f)$ and $\rho(\tilde f)\circ\tilde w=\rho(f)$, that is, $w^*Sw=\tilde S$ and $\tilde w^*\tilde S\tilde w=S$;
\item there is $c\in(0,1]$ with $N_{\tilde S}(\lambda)\le N_S(c\lambda)$ and $N_S(\lambda)\le N_{\tilde S}(c\lambda)$ for all $\lambda>0$.
\end{enumerate}
Moreover, if $N_{\tilde S}(\lambda)\le N_S(2^{-m}\lambda)$ for all $\lambda>0$ ($m\ge0$ an integer), then $w$ in \textup{(ii)} can be chosen with $\|w\|^2\le2^{m+1}$.
\end{theorem}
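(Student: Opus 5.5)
We prove the cycle (ii)$\Rightarrow$(i)$\Rightarrow$(iii)$\Rightarrow$(ii), and get the norm bound from the construction in the last step.

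\emph{(ii)$\Rightarrow$(i) and (i)$\Rightarrow$(iii).} The first is trivial: equality of seminorms is $\asymp$ with $c=C=1$. For the second, take $w,\tilde w$ and constants $c,C$ as in Proposition \ref{proposition-reduction}(a), so that $c\tilde S\le w^*Sw$ and $cS\le\tilde w^*\tilde S\tilde w$. If $w\neq0$, Proposition \ref{proposition-counting} with $K_1^2=c$ gives $N_{\tilde S}(\lambda)\le N_S(c\lambda/\|w\|^2)$. If $w=0$, then $\tilde S=0$ and $N_{\tilde S}\equiv0$, so the inequality holds for any constant. The symmetric bound follows from $\tilde w$. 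Take $c'=\min(1,c/\|w\|^2,c/\|\tilde w\|^2)$, omitting any term with a zero operator. Since counting functions are nonincreasing, shrinking the constant keeps both inequalities valid.

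\emph{(iii)$\Rightarrow$(ii).} Choose an integer $m\ge0$ with $2^{-m}\le c$. Then $N_{\tilde S}(\lambda)\le N_S(c\lambda)\le N_S(2^{-m}\lambda)$. Lemma \ref{lemma-embedding} with $q=2$ gives a partial isometry $J:\tilde H\to H$ with initial space $(\ker\tilde S)^\perp$ and $J\tilde SJ^*\le2^{m+1}S$. This is the form $AA^*\le\kappa BB^*$ with $A=J\tilde S^{1/2}:\tilde H\to H$, $B=S^{1/2}$ and $\kappa=2^{m+1}$. Douglas' Lemma \ref{lemma-douglas} then yields a bounded $C:\tilde H\to H$ with $J\tilde S^{1/2}=S^{1/2}C$ and $\|C\|^2\le2^{m+1}$.

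Next we turn this factorization into an exact identity. We have $C^*SC=(S^{1/2}C)^*(S^{1/2}C)=\tilde S^{1/2}J^*J\tilde S^{1/2}$. Here $J^*J$ is the projection onto $(\ker\tilde S)^\perp$, and $\ran\tilde S^{1/2}\subseteq\overline{\ran\tilde S}=(\ker\tilde S)^\perp$. Hence $J^*J\tilde S^{1/2}=\tilde S^{1/2}$ and $C^*SC=\tilde S$. So $w=C$ works, with $\|w\|^2\le2^{m+1}$, which is the ``moreover'' clause. The symmetric hypothesis gives $\tilde w$ in the same way.

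\emph{Main obstacle.} The substantive work is already in Lemma \ref{lemma-embedding}: the greedy allocation with reserved room in infinite-dimensional bands. Once that lemma is available, the only points needing care are the following.
\begin{itemize}
\item Check that Douglas' factorization gives the equality $w^*Sw=\tilde S$ and not just an inequality. This is exactly where the initial space of $J$ being $(\ker\tilde S)^\perp$ is used.
\item Handle the degenerate case $w=0$ in the necessity direction.
\end{itemize}
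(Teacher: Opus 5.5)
Your proposal is correct and follows essentially the same route as the paper's proof. The paper runs the same cycle through Proposition \ref{proposition-counting} (with the same handling of $w=0$), Lemma \ref{lemma-embedding} with $q=2$, and Douglas' lemma. The only cosmetic difference is that it checks exactness through the norm identity $\|S^{1/2}wy\|=\|J\tilde S^{1/2}y\|=\|\tilde S^{1/2}y\|$, where you use $J^*J=P_{(\ker\tilde S)^\perp}$.
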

\begin{proof}
\emph{(i)$\Rightarrow$(iii).} Let $w,\tilde w$ be bounded with $c_1\rho(\tilde f)\le\rho(f)\circ w\le C_1\rho(\tilde f)$ and $\tilde c_1\rho(f)\le\rho(\tilde f)\circ\tilde w\le\tilde C_1\rho(f)$. If $w=0$ then $\rho(\tilde f)=0$, so $\tilde S=0$ and $N_{\tilde S}\equiv0\le N_S(c\lambda)$ for every $c$; otherwise Proposition \ref{proposition-counting} gives $N_{\tilde S}(\lambda)\le N_S(K_1^2\lambda/\|w\|^2)$ with $K_1=c_1$. The same holds for $\tilde w$. Take $c=\min\{1,c_1^2/\|w\|^2,\tilde c_1^2/\|\tilde w\|^2\}$ (omitting the terms with $w=0$ or $\tilde w=0$) and use that $N_S$ is nonincreasing.

\emph{(iii)$\Rightarrow$(ii).} Choose an integer $m\ge0$ with $2^{-m}\le c$. Since $N_S$ is nonincreasing, $N_{\tilde S}(\lambda)\le N_S(c\lambda)\le N_S(2^{-m}\lambda)$. Lemma \ref{lemma-embedding} with $q=2$ gives a partial isometry $J$ with initial space $(\ker\tilde S)^\perp$ and $J\tilde SJ^*\le2^{m+1}S$. Put $A=J\tilde S^{1/2}:\tilde H\to H$ and $B=S^{1/2}$. Then $AA^*=J\tilde SJ^*\le2^{m+1}BB^*$, so Lemma \ref{lemma-douglas} gives a bounded $w:\tilde H\to H$ with $S^{1/2}w=J\tilde S^{1/2}$ and $\|w\|^2\le2^{m+1}$. Since $\ran\tilde S^{1/2}\subseteq(\ker\tilde S)^\perp$ and $J$ is isometric there, $\rho(f)(wy)=\|S^{1/2}wy\|=\|J\tilde S^{1/2}y\|=\|\tilde S^{1/2}y\|=\rho(\tilde f)(y)$, that is $w^*Sw=\tilde S$ (a bounded self-adjoint operator is determined by its quadratic form). The same argument with the roles of $f,\tilde f$ exchanged gives $\tilde w$.

\emph{(ii)$\Rightarrow$(i).} Take $c=C=1$.
\end{proof}

Condition (iii) of Theorem \ref{theorem-all} translates into a statement about singular numbers.

\begin{theorem}\label{theorem-singular}
$f\simeq\tilde f$ if and only if there is $C\ge1$ with $C^{-1}s_n(\tilde f)\le s_n(f)\le C\,s_n(\tilde f)$ for all $n\ge1$.
\end{theorem}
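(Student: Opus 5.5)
The plan is to reduce the statement to condition (iii) of Theorem \ref{theorem-all} and to translate between counting functions and eigenvalues using Lemma \ref{lemma-lambda}. Since $s_n=\lambda_n^{1/2}$, the singular number condition is equivalent to $C^{-2}\lambda_n(\tilde S)\le\lambda_n(S)\le C^2\lambda_n(\tilde S)$ for all $n$, with $S=S_f$, $\tilde S=S_{\tilde f}$. So it suffices to show that the existence of $c\in(0,1]$ with $N_{\tilde S}(\lambda)\le N_S(c\lambda)$ for all $\lambda>0$ is equivalent to $\lambda_n(\tilde S)\le c^{-1}\lambda_n(S)$ for all $n$ (with the same constant), and then apply this in both directions.

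For the forward implication, fix $n$. If $\lambda_n(\tilde S)=0$ there is nothing to prove. Otherwise, for any $\lambda$ with $0<\lambda<\lambda_n(\tilde S)$, Lemma \ref{lemma-lambda}(a) gives $N_{\tilde S}(\lambda)\ge n$. Hence $N_S(c\lambda)\ge n$, and Lemma \ref{lemma-lambda}(a) again gives $\lambda_n(S)>c\lambda$. Letting $\lambda\uparrow\lambda_n(\tilde S)$ yields $\lambda_n(S)\ge c\lambda_n(\tilde S)$.

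For the converse, assume $\lambda_n(\tilde S)\le c^{-1}\lambda_n(S)$ for all $n$. By Lemma \ref{lemma-lambda}(b),
\[N_{\tilde S}(\lambda)=\#\{n:\lambda_n(\tilde S)>\lambda\}\le\#\{n:\lambda_n(S)>c\lambda\}=N_S(c\lambda),\]
since $\lambda_n(\tilde S)>\lambda$ implies $\lambda_n(S)\ge c\lambda_n(\tilde S)>c\lambda$.

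Combining the two directions with Theorem \ref{theorem-all}: $f\simeq\tilde f$ iff (iii) holds for some $c$, iff $c\lambda_n(\tilde S)\le\lambda_n(S)\le c^{-1}\lambda_n(\tilde S)$ for all $n$, iff the singular number condition holds with $C=c^{-1/2}\ge1$. In the other direction, given $C$ one takes $c=C^{-2}\in(0,1]$.

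The only delicate point is the infinite-multiplicity and finite-rank bookkeeping: $N$ may take the value $\infty$, and $\lambda_n$ may vanish. Lemma \ref{lemma-lambda} is stated for finite $n$ and arbitrary $N$, and the counting identity in (b) holds with cardinals in $\{0,1,\dots,\infty\}$, so no separate case analysis should be needed. I would only check that the limit step in the forward implication is valid when $\lambda_n(\tilde S)>0$, which it is.
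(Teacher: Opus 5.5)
Your proposal is correct and follows the paper's proof essentially verbatim: you reduce to condition (iii) of Theorem \ref{theorem-all}, obtain $\lambda_n(S)\ge c\,\lambda_n(\tilde S)$ from Lemma \ref{lemma-lambda}(a) by letting $\lambda\uparrow\lambda_n(\tilde S)$, and for the converse compare counting functions with constant $c=C^{-2}$. The only cosmetic difference is in the converse: you use the identity of Lemma \ref{lemma-lambda}(b) and an inclusion of index sets, whereas the paper argues with Lemma \ref{lemma-lambda}(a) for each finite $n$ and notes that the counts take values in $\{0,1,\dots,\infty\}$.
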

\begin{proof}
Write $\lambda_n=\lambda_n(S)$, $\tilde\lambda_n=\lambda_n(\tilde S)$, so $s_n(f)=\lambda_n^{1/2}$.

\emph{($\Rightarrow$)} Let $c$ be as in Theorem \ref{theorem-all}(iii), and let $n$ be finite with $\tilde\lambda_n>0$. For $0<\lambda<\tilde\lambda_n$, Lemma \ref{lemma-lambda}(a) gives $N_{\tilde S}(\lambda)\ge n$, hence $N_S(c\lambda)\ge n$, hence $\lambda_n>c\lambda$. Letting $\lambda\uparrow\tilde\lambda_n$ gives $\lambda_n\ge c\tilde\lambda_n$ (trivially true if $\tilde\lambda_n=0$). By symmetry $\tilde\lambda_n\ge c\lambda_n$. Take square roots, with $C=c^{-1/2}$.

\emph{($\Leftarrow$)} We have $\tilde\lambda_n\le C^2\lambda_n$ for all $n$. If $N_{\tilde S}(\lambda)\ge n$ for a finite $n$, then $\tilde\lambda_n>\lambda$, so $\lambda_n\ge C^{-2}\tilde\lambda_n>C^{-2}\lambda$ and $N_S(C^{-2}\lambda)\ge n$ by Lemma \ref{lemma-lambda}(a). As the counts take values in $\{0,1,\dots,\infty\}$ (separability), this shows $N_{\tilde S}(\lambda)\le N_S(C^{-2}\lambda)$, and symmetrically. So Theorem \ref{theorem-all}(iii) holds with $c=C^{-2}$.
\end{proof}

We deduce the description of the classes of $\simeq$.

\begin{corollary}\label{corollary-classes}
Let $f,\tilde f$ be Bessel families.
\begin{enumerate}[(a)]
\item If $f\simeq\tilde f$ then $\rk T_f=\rk T_{\tilde f}$, $T_f$ is compact iff $T_{\tilde f}$ is, and $\sum_ns_n(f)^p<\infty$ iff $\sum_ns_n(\tilde f)^p<\infty$ for every $p>0$.
\item For each $r\in\{0,1,2,\dots\}$ the families of rank $r$ form exactly one class.
\item The classes of compact families of infinite rank are in bijection with the nonincreasing sequences of positive numbers tending to $0$, modulo term-by-term comparability (comparability up to a multiplicative constant). There are exactly $2^{\aleph_0}$ of them.
\item All families with noncompact $T_f$ form exactly one class. It contains the orthonormal bases of the infinite-dimensional separable spaces, all frames of such spaces, and also nonframes.
\end{enumerate}
\end{corollary}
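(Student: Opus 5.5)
Everything will be deduced from Theorem \ref{theorem-singular}, which reduces $\simeq$ to term-by-term comparability of the singular number sequences $(s_n(f))_{n\ge1}$. Together with Lemma \ref{lemma-lambda}(c) and Lemma \ref{lemma-compact}, the invariants in (a) can be read off directly. If $C^{-1}s_n(\tilde f)\le s_n(f)\le Cs_n(\tilde f)$, then $s_n(f)>0$ iff $s_n(\tilde f)>0$. Hence $\rk T_f=\#\{n:s_n(f)>0\}=\rk T_{\tilde f}$, using $\rk T_f=\rk S_f$ (Lemma \ref{lemma-basic}(b)). The same comparability shows that $s_n(f)\to0$ iff $s_n(\tilde f)\to0$, which is compactness by Lemma \ref{lemma-compact} and Lemma \ref{lemma-basic}(b). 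Finally, $\sum s_n(f)^p\le C^p\sum s_n(\tilde f)^p$ and symmetrically, which gives the Schatten statement.

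For (b), we argue as follows. If $\rk T_f=\rk T_{\tilde f}=r<\infty$, then $s_n=0$ for $n>r$, and $s_1\ge\dots\ge s_r>0$ for both families. Taking $C=\max_{n\le r}\max(s_n(f)/s_n(\tilde f),s_n(\tilde f)/s_n(f))$ gives comparability, so the two families are equivalent. Conversely, equivalent families have equal rank by (a). Each rank $r$ actually occurs in the class of families, by Lemma \ref{lemma-basic}(c) applied to a diagonal operator of rank $r$.

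For (c), the map $f\mapsto(s_n(f))_n$ sends compact families of infinite rank to nonincreasing positive null sequences; positivity holds by Lemma \ref{lemma-lambda}(c) and the null property by Lemma \ref{lemma-compact}. By Theorem \ref{theorem-singular} this map induces an injection on classes. It is surjective: given such a sequence $(t_n)$, take $S=\operatorname{diag}(t_n^2)$ on $\ell^2$. Then $S$ is compact with $\lambda_n(S)=t_n^2$ by Lemma \ref{lemma-compact}, and Lemma \ref{lemma-basic}(c) realizes $S$ as a frame operator. For the cardinality, there are at most $2^{\aleph_0}$ real sequences. For at least $2^{\aleph_0}$ classes, I would use sequences $t^{(\alpha)}_n=n^{-\alpha}$ for $\alpha>0$. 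These are pairwise noncomparable, since $n^{\beta-\alpha}$ is unbounded for $\alpha<\beta$. This cardinality check is the only point that requires a construction, and it is elementary.

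For (d), suppose $T_f$ is noncompact. Then $(s_n(f))$ is nonincreasing and does not tend to $0$, so $s_n(f)\in[\delta,\|T_f\|]$ for all $n$, where $\delta=\lim_n s_n(f)>0$. Any two such sequences are comparable, with constant $\max(\|T_f\|/\tilde\delta,\|T_{\tilde f}\|/\delta)$. Conversely, an equivalent family is noncompact by (a). An orthonormal basis of an infinite-dimensional space has $S=\mathrm{id}$ and $s_n=1$, so it lies in this class. A frame of such a space has $S_f\ge A\,\mathrm{id}$, so its spectrum lies in $[A,B]$ and $N_S(\lambda)=\infty$ for $\lambda<A$; it is therefore noncompact. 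As a nonframe in the class I would exhibit $S_1$ of Example \ref{example-running}, which is noncompact and not bounded below. No step is a genuine obstacle.
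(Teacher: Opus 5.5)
Your proof is correct and follows the paper's argument essentially step for step. Everything is read off from Theorem \ref{theorem-singular}; the diagonal realizations come from Lemma \ref{lemma-basic}(c); you use the same test sequences $n^{-a}$ and the same two-sided bound $\delta\le s_n\le\|T_f\|$ for noncompact families. The only cosmetic point is $r=0$ in (b): your maximum is then over an empty set, and you simply take $C=1$.
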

\begin{proof}
(a) By Theorem \ref{theorem-singular} the sequences $(s_n(f))$ and $(s_n(\tilde f))$ are comparable. In particular $s_n(f)>0$ iff $s_n(\tilde f)>0$, so the ranks agree (Lemma \ref{lemma-lambda}(c)); $s_n(f)\to0$ iff $s_n(\tilde f)\to0$, so the compactness agrees (Lemmas \ref{lemma-compact} and \ref{lemma-basic}(b)); and the series $\sum_ns_n^p$ converge simultaneously (for the Schatten classes and $s$-numbers see \cite{GohbergKrein1969,Pietsch1980}).

(b) If both families have rank $r$, then $s_n>0$ exactly for $n\le r$ in both, and the finitely many ratios $s_n(f)/s_n(\tilde f)$ are bounded above and below, so $f\simeq\tilde f$ by Theorem \ref{theorem-singular}. Rank $r$ occurs: the diagonal operator $\operatorname{diag}(1,\dots,1,0,0,\dots)$ with $r$ ones is a frame operator by Lemma \ref{lemma-basic}(c) (on $\FF^r$ or on $\ell^2$).

(c) By Theorem \ref{theorem-singular}, the class of a compact family $f$ of infinite rank is determined by the sequence $(s_n(f))_{n\ge1}$ modulo comparability, and this sequence is positive, nonincreasing and tends to $0$ (Lemma \ref{lemma-compact}). Conversely every such sequence $(s_n)$ occurs, as the frame operator $\operatorname{diag}(s_n^2)$ on $\ell^2$ (Lemma \ref{lemma-basic}(c)). There are at most $2^{\aleph_0}$ sequences of reals; and the sequences $s_n=n^{-a}$, $a>0$, are pairwise noncomparable, since $n^{-a}/n^{-b}=n^{b-a}$ is unbounded in one of the two directions when $a\ne b$.

(d) If $T_f$ is noncompact, then $\lambda_n(S_f)\not\to0$ (Lemma \ref{lemma-compact}). As $(\lambda_n(S_f))_n$ is nonincreasing, $\lambda_\infty(f):=\inf_n\lambda_n(S_f)>0$, and $\lambda_n(S_f)\le\|S_f\|$. So for two noncompact families $f,\tilde f$ the ratios $\lambda_n(S_f)/\lambda_n(S_{\tilde f})$ lie in $[\lambda_\infty(f)/\|S_{\tilde f}\|,\|S_f\|/\lambda_\infty(\tilde f)]$, hence the singular numbers are comparable and $f\simeq\tilde f$. Noncompact operators live on infinite-dimensional spaces. An orthonormal basis $e$ has $S_e=\mathrm{id}$, which is noncompact; a frame is bounded below, hence noncompact on an infinite-dimensional space; and a family with $S_f=\mathrm{id}\oplus\operatorname{diag}(1/n)$ (Lemma \ref{lemma-basic}(c)) is noncompact but not a frame, since $T_f$ is not bounded below.
\end{proof}

\begin{example}[Running example, continued]\label{example-running-all}
For the families of Example \ref{example-running}, $\lambda_n(S_1)=\lambda_n(S_2)=1$ for all $n$, so $s_n(f)=s_n(\tilde f)=1$ and $f\simeq\tilde f$ by Theorem \ref{theorem-singular}. The singular numbers cannot see the difference between the two families: they only register that $1$ is an eigenvalue of infinite multiplicity, and not that the spectrum accumulates at $0$ in very different ways.
\end{example}

\section{Injective comparison operators}
\label{section-inj}

Injectivity of the comparison operators adds exactly the kernel dimension.

\begin{theorem}\label{theorem-inj}
$f\simeq_\mathrm{inj}\tilde f$ if and only if $f\simeq\tilde f$ and $k(f)=k(\tilde f)$.
\end{theorem}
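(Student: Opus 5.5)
The plan is to prove the two implications separately, working with frame operators $S=S_f$, $\tilde S=S_{\tilde f}$ via Proposition \ref{proposition-reduction}(a). Necessity comes from the fact that a comparison operator must send the kernel of one frame operator into the kernel of the other. Sufficiency comes from correcting the exact comparison operator of Theorem \ref{theorem-all}(ii) by an isometry between the kernels, which restores injectivity without changing the pulled-back form.

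\emph{Necessity.} Clearly $f\simeq_\mathrm{inj}\tilde f$ implies $f\simeq\tilde f$ by Proposition \ref{proposition-inclusions}. Let $w:\tilde H\to H$ be injective with $c\,\tilde S\le w^*Sw\le C\,\tilde S$. For $y\in\ker\tilde S$ the upper bound gives $\|S^{1/2}wy\|^2=\langle w^*Swy,y\rangle\le C\langle\tilde Sy,y\rangle=0$. Hence $wy\in\ker S^{1/2}=\ker S$ (Lemma \ref{lemma-basic}(a)). Thus $w$ restricts to an injective linear map $\ker\tilde S\to\ker S$, so $k(\tilde f)\le k(f)$. The same argument applied to $\tilde w$ gives the reverse inequality, and therefore $k(f)=k(\tilde f)$; this holds in $\{0,1,\dots,\infty\}$, since an injective linear map cannot increase dimension.

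\emph{Sufficiency.} Assume $f\simeq\tilde f$ and $k(f)=k(\tilde f)$. By Theorem \ref{theorem-all}(ii) there is a bounded $w_0:\tilde H\to H$ with $w_0^*Sw_0=\tilde S$. Let $P$ be the orthogonal projection of $H$ onto $\ker S$ and $\tilde P$ that of $\tilde H$ onto $\ker\tilde S$. Because the kernel dimensions agree, there is a unitary $V:\ker\tilde S\to\ker S$. Define
\[w=(1-P)w_0+V\tilde P.\]
Since $SP=0$ and $SV=0$, we get $S^{1/2}w=S^{1/2}w_0$, and hence $w^*Sw=w_0^*Sw_0=\tilde S$ exactly.

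For injectivity, suppose $wy=0$. Then the first term $(1-P)w_0y$ lies in $(\ker S)^\perp$ and the second term $V\tilde Py$ lies in $\ker S$, so both vanish. In particular $\tilde Py=0$, i.e.\ $y\in(\ker\tilde S)^\perp$. On the other hand $\langle\tilde Sy,y\rangle=\|S^{1/2}wy\|^2=0$, so $y\in\ker\tilde S$. Together these give $y=0$. The symmetric construction produces an injective $\tilde w$ with $\tilde w^*\tilde S\tilde w=S$, and Proposition \ref{proposition-reduction}(a) then yields $f\simeq_\mathrm{inj}\tilde f$.

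The only real point is the injectivity of the corrected operator. It is handled by separating ranges: projecting $w_0$ onto $(\ker S)^\perp$ forces the two summands of $w$ into orthogonal subspaces. The equality $k(f)=k(\tilde f)$ is exactly what makes an isometric, hence injective, $V$ on the kernels available.
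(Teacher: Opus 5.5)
Your proof is correct and follows the paper's argument. For necessity, $w$ maps $\ker\tilde S$ injectively into $\ker S$. For sufficiency, you compress the exact operator $w_0$ of Theorem \ref{theorem-all}(ii) into $(\ker S)^\perp$ and add a unitary between the kernels. In fact your $w=(1-P)w_0+V\tilde P$ is the paper's operator: since $w_0$ maps $\ker\tilde S$ into $\ker S$, you have $(1-P)w_0\tilde P=0$.
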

\begin{proof}
Write $S=S_f$, $\tilde S=S_{\tilde f}$; recall $\ker T_f=\ker S$.

\emph{($\Rightarrow$)} The relation $\simeq$ holds by Proposition \ref{proposition-inclusions}. Let $w:\tilde H\to H$ be injective with $\rho(f)\circ w\asymp\rho(\tilde f)$. For $y\in\ker\tilde S$ we have $\rho(\tilde f)(y)=0$, hence $\rho(f)(wy)=0$ and $wy\in\ker S$. Thus $w$ maps $\ker\tilde S$ injectively into $\ker S$, so $k(\tilde f)\le k(f)$. Symmetrically $k(f)\le k(\tilde f)$, and since both lie in $\{0,1,\dots,\infty\}$, $k(f)=k(\tilde f)$.

\emph{($\Leftarrow$)} \emph{Step 1: an exact operator.} By Theorem \ref{theorem-all} there is a bounded $w_0:\tilde H\to H$ with $w_0^*Sw_0=\tilde S$. \emph{Step 2: remove the kernel of $S$.} Let $P$ be the orthogonal projection onto $(\ker S)^\perp$; since $S^{1/2}(1-P)=0$ we have $\rho(f)\circ P=\rho(f)$. For $0\ne y\in(\ker\tilde S)^\perp$ one has $\rho(f)(Pw_0y)=\rho(f)(w_0y)=\rho(\tilde f)(y)>0$, so $Pw_0y\ne0$. Hence $Pw_0$ is injective on $(\ker\tilde S)^\perp$, with values in $(\ker S)^\perp$. \emph{Step 3: match the kernels.} Since $\dim\ker\tilde S=\dim\ker S$, there is a unitary $\iota:\ker\tilde S\to\ker S$ (both are separable of the same dimension, finite or infinite). Define $w=Pw_0|_{(\ker\tilde S)^\perp}\oplus\iota$. It is bounded and injective, because its two components have values in the orthogonal spaces $(\ker S)^\perp$ and $\ker S$ and are injective. \emph{Step 4: check.} For $y=y_1+y_2$ with $y_1\in(\ker\tilde S)^\perp$, $y_2\in\ker\tilde S$ we have $\rho(f)(wy)=\rho(f)(Pw_0y_1)=\rho(\tilde f)(y_1)=\rho(\tilde f)(y)$, since $\rho(f)$ vanishes on $\ker S$ and $\rho(\tilde f)$ on $\ker\tilde S$. Exchanging the roles of $f,\tilde f$ gives $\tilde w$.
\end{proof}

Theorem \ref{theorem-inj} yields the following description of the classes of $\simeq_\mathrm{inj}$.

\begin{corollary}\label{corollary-inj}
The noncompact families split into exactly one $\simeq_\mathrm{inj}$-class for each $k\in\{0,1,2,\dots,\infty\}$. The compact families are classified by the pair (class of $(s_n)$ modulo comparability, $k$).
\end{corollary}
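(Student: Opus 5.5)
The corollary should follow by combining Theorem \ref{theorem-inj} with Corollary \ref{corollary-classes}. By Theorem \ref{theorem-inj}, the $\simeq_\mathrm{inj}$-class of $f$ is the intersection of its $\simeq$-class with the set of families having the same kernel dimension $k(f)$. So it suffices to describe, inside each $\simeq$-class, which values of $k$ occur.

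\emph{Noncompact families.} By Corollary \ref{corollary-classes}(d), all noncompact families form one $\simeq$-class. Hence two noncompact families are $\simeq_\mathrm{inj}$-equivalent if and only if $k(f)=k(\tilde f)$. This gives at most one class for each $k\in\{0,1,\dots,\infty\}$.

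To see that every value $k$ occurs, take $S=\mathrm{id}_{\ell^2}\oplus0_{\FF^k}$, with $0$ acting on a space of dimension $k$ ($\ell^2$ itself when $k=\infty$). This is a bounded positive operator, so by Lemma \ref{lemma-basic}(c) it is the frame operator of a Bessel family. It is noncompact because it is the identity on an infinite-dimensional subspace, and its kernel has dimension exactly $k$. Distinct values of $k$ give distinct classes, again by Theorem \ref{theorem-inj}. So there is exactly one class for each $k$.

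\emph{Compact families.} Compactness is a $\simeq$-invariant by Corollary \ref{corollary-classes}(a). By Theorem \ref{theorem-singular}, the $\simeq$-class of a family is determined by the sequence $(s_n)$ modulo comparability. Theorem \ref{theorem-inj} then shows that two compact families are $\simeq_\mathrm{inj}$-equivalent exactly when these comparability classes coincide and their kernel dimensions are equal. This is the stated classification by the pair (class of $(s_n)$, $k$).

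One could also check which pairs are realized, by taking $\operatorname{diag}(s_n^2)\oplus0$ on a suitable space. Some care is needed here: a finite-rank operator on a finite-dimensional space forces $k=\dim H-r$, while on an infinite-dimensional space $k$ can be anything compatible with $\rk+k=\dim H$. The statement as given only asserts classification by the pair, so this realization check is optional. There is no real obstacle; the only point needing care is the construction showing that each kernel dimension occurs among noncompact families, which is the explicit direct sum above.
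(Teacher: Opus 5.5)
Your proof is correct and follows the paper's argument: Theorem \ref{theorem-inj} reduces $\simeq_\mathrm{inj}$ to $\simeq$ together with equality of $k$, Corollary \ref{corollary-classes}(d) makes all noncompact families a single $\simeq$-class, and $\mathrm{id}_{\ell^2}\oplus0_{\FF^k}$ realizes each $k$, exactly as in the paper. Your optional realization remark is harmless, but note that a finite-rank operator on an infinite-dimensional space always has $k=\infty$.
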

\begin{proof}
By Theorem \ref{theorem-inj}, $\simeq_\mathrm{inj}$ is the intersection of $\simeq$ with the equality of $k$. All noncompact families are $\simeq$-equivalent (Corollary \ref{corollary-classes}(d)), so on them the classes are the level sets of $k$. Every $k$ occurs, for instance for $S_f=\mathrm{id}_{\ell^2}\oplus0_{\FF^k}$ ($0_{\ell^2}$ for $k=\infty$), a frame operator by Lemma \ref{lemma-basic}(c). For compact families, the $\simeq$-classes are described in Corollary \ref{corollary-classes}.
\end{proof}

\begin{example}[Running example, continued]
For the families of Example \ref{example-running}, $k(f)=k(\tilde f)=0$, so $f\simeq_\mathrm{inj}\tilde f$.
\end{example}

\section{Comparison operators bounded below}
\label{section-closed}

\subsection{Semi-Fredholm families}

We begin with several characterizations of upper semi-Fredholm families.

\begin{lemma}\label{lemma-usF}
Let $f$ be a Bessel family and $S=S_f$. The following are equivalent:
\begin{enumerate}[(i)]
\item $f$ is upper semi-Fredholm, that is, $k(f)<\infty$ and $\ran T_f$ is closed;
\item $\ker S$ is finite-dimensional and $\ran S$ is closed;
\item $M_S(\mu)<\infty$ for some $\mu>0$;
\item there is $\delta>0$ with $M_S(\mu)=k(f)<\infty$ for all $0<\mu<\delta$.
\end{enumerate}
In this case $\delta P\le S\le\|S\|P$, where $P$ is the orthogonal projection onto $(\ker S)^\perp$ and $\delta>0$ is as in \textup{(iv)}.
\end{lemma}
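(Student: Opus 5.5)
The plan is to prove the cycle (i)$\Rightarrow$(ii)$\Rightarrow$(iv)$\Rightarrow$(iii)$\Rightarrow$(ii)$\Rightarrow$(i), and to read off the final inequality from (iv).

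\emph{(i)$\iff$(ii).} Since $\ker T_f=\ker S$, the kernel dimensions agree. For closed ranges, I would use the standard fact that $\ran T_f$ is closed iff $T_f$ is bounded below on $(\ker T_f)^\perp$, i.e.\ $\|T_fx\|^2=\langle Sx,x\rangle\ge\delta\|x\|^2$ there. By Lemma \ref{lemma-spectral-subspaces} this holds iff $E_S(0,\delta)=0$ for some $\delta>0$. The same spectral gap condition is equivalent to $\ran S$ being closed, because $S$ is bounded below on $(\ker S)^\perp$ iff $\sigma(S)\cap(0,\delta)=\emptyset$. The cleanest route is to show both (i) and (ii) are equivalent to the gap condition:
\[
\text{(G)}\qquad \ker S \text{ finite-dimensional and } E_S(0,\delta)=0 \text{ for some } \delta>0 .
\]

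\emph{(G)$\Rightarrow$(iv).} For $\mu<\delta$ we have $E[0,\mu]H=\ker S\oplus E(0,\mu]H=\ker S$, so $M_S(\mu)=k(f)<\infty$.

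\emph{(iv)$\Rightarrow$(iii)} is trivial.

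\emph{(iii)$\Rightarrow$(G).} This is the main step. Suppose $M_S(\mu_0)<\infty$. Then $\ker S\subseteq E[0,\mu_0]H$ is finite-dimensional. Moreover $E(0,\mu_0]H$ is a finite-dimensional space reducing $S$, so $S$ restricted to it has finitely many eigenvalues, all in $(0,\mu_0]$. Let $\delta$ be the smallest of them, or $\delta=\mu_0$ if this space is $0$. Then $E_S(0,\delta)=0$, because that spectral subspace lies inside the finite-dimensional piece, where the spectrum is contained in $[\delta,\mu_0]$.

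For the gap-to-range direction, if $E_S(0,\delta)=0$ then on $(\ker S)^\perp=E[\delta,\infty)H$ we get $\langle Sx,x\rangle\ge\delta\|x\|^2$. Hence $\|S x\|\ge\delta\|x\|$ and $\|T_fx\|\ge\delta^{1/2}\|x\|$ there, so both ranges are closed. Conversely, closed range of $T_f$ (or of $S$) gives bounded-belowness on $(\ker)^\perp$, which forces the gap.

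The final claim follows from the same computation. With the $\delta$ of (iv), the argument shows $E(0,\delta)=0$. For any $\delta'<\delta$, (iv) gives $E(0,\delta']=0$, so the spectrum of $S$ on $(\ker S)^\perp$ lies in $[\delta,\|S\|]$. This yields $\delta P\le S\le\|S\|P$.

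The only delicate point is the convention on $\delta$: strict versus non-strict endpoints and the choice $\delta'<\delta$ need care.
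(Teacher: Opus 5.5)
Your proof is correct and follows essentially the same route as the paper: the paper also argues through bounded-belowness of $S$ on $(\ker S)^\perp$, of which your spectral gap $E_S(0,\delta)=0$ is the spectral restatement, and its step (iii)$\Rightarrow$(ii) is your finite-dimensional eigenvalue argument on $E_S[0,\mu]H$. Your treatment of the final inequality is slightly more careful than the paper's: you deduce $E_S(0,\delta)=0$ directly from (iv) for the given $\delta$, whereas the paper only uses the bounded-below constant obtained in its (ii)$\Rightarrow$(iv) step.
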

\begin{proof}
(i)$\Leftrightarrow$(ii). $\ran T_f$ is closed iff $T_f$ is bounded below on $(\ker T_f)^\perp=(\ker S)^\perp$ (a standard consequence of the open mapping theorem), iff $\langle Sx,x\rangle=\|T_fx\|^2\ge\delta\|x\|^2$ on $(\ker S)^\perp$ for some $\delta>0$, iff $S|_{(\ker S)^\perp}$ is bounded below, iff $\ran S$ is closed (note that $(\ker S)^\perp$ reduces $S$ and $S|_{(\ker S)^\perp}$ is injective with the same range as $S$).

(ii)$\Rightarrow$(iv). By the proof of (i)$\Leftrightarrow$(ii) we have $S\ge\delta$ on $(\ker S)^\perp=E(0,\infty)H$ for some $\delta>0$. If $0\ne x\in E(0,\delta/2]H$, then $x\in(\ker S)^\perp$ and $\langle Sx,x\rangle\le\frac\delta2\|x\|^2<\delta\|x\|^2$ by Lemma \ref{lemma-spectral-subspaces}(c), a contradiction. Hence $E[0,\mu]H=E\{0\}H=\ker S$ for $0<\mu\le\delta/2$, that is, $M_S(\mu)=k(f)<\infty$ (with $\delta/2$ in place of $\delta$ in (iv)).

(iv)$\Rightarrow$(iii) is clear.

(iii)$\Rightarrow$(ii). If $M_S(\mu)<\infty$ then $E[0,\mu]H$ is finite-dimensional and reduces $S$, so $\ker S\subseteq E[0,\mu]H$ is finite-dimensional, and $S$ restricted to $E[0,\mu]H$ is a self-adjoint operator on a finite-dimensional space, with finitely many eigenvalues. Let $\delta>0$ be smaller than $\mu$ and than every positive eigenvalue of this restriction. Then $E(0,\delta]H=0$, so $S\ge\delta$ on $E(\delta,\infty)H=E(0,\infty)H=(\ker S)^\perp$, and $S|_{(\ker S)^\perp}$ is bounded below by $\delta$, which gives (ii).

The last assertion: $S\ge\delta$ on $(\ker S)^\perp$ and $S=0$ on $\ker S$ give $S\ge\delta P$, and $S\le\|S\|P$ is clear since $S=SP$.
\end{proof}

The next proposition gives a necessary condition for the existence of a bounded below comparison operator.

\begin{proposition}\label{proposition-necessary-cl}
Let $f,\tilde f$ be Bessel families and let $w:\tilde H\to H$ be bounded below, say $\|wy\|\ge\beta\|y\|$, with $\rho(f)\circ w\le C_1\rho(\tilde f)$. Then $w(\ker S_{\tilde f})\subseteq\ker S_f$, $k(\tilde f)\le k(f)$, $\dim\tilde H\le\dim H$, and for all $\mu>0$
\[M_{S_{\tilde f}}(\mu)\le M_{S_f}(C_1^2\beta^{-2}\mu).\]
In particular, if $f$ is upper semi-Fredholm then so is $\tilde f$.
\end{proposition}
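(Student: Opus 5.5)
The plan is to prove the four assertions in order, each a short consequence of the two hypotheses: the upper domination $\rho(f)\circ w\le C_1\rho(\tilde f)$, which reads $w^*S_fw\le C_1^2S_{\tilde f}$ by Proposition \ref{proposition-reduction}(a), and the lower bound $\|wy\|\ge\beta\|y\|$. The last assertion is then read off from the dimension inequality via Lemma \ref{lemma-usF}.

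\emph{Kernels and dimensions.} If $y\in\ker S_{\tilde f}$, then $\rho(\tilde f)(y)=0$, so $\rho(f)(wy)=0$. Since $\rho(f)$ vanishes exactly on $\ker S_f$ (Lemma \ref{lemma-basic}(a)), we get $wy\in\ker S_f$. Because $w$ is injective, it maps $\ker S_{\tilde f}$ injectively into $\ker S_f$, which gives $k(\tilde f)\le k(f)$. Injectivity of $w$ on all of $\tilde H$ likewise gives $\dim\tilde H\le\dim H$.

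\emph{The small-spectrum inequality.} This is the main step, and it mirrors Proposition \ref{proposition-counting} with the inequalities reversed. Put $V=E_{S_{\tilde f}}[0,\mu]\tilde H$ and $\nu=C_1^2\beta^{-2}\mu$. For $y\in V$, Lemma \ref{lemma-spectral-subspaces}(c) gives
\[\langle S_fwy,wy\rangle\le C_1^2\langle S_{\tilde f}y,y\rangle\le C_1^2\mu\|y\|^2\le C_1^2\mu\beta^{-2}\|wy\|^2=\nu\|wy\|^2.\]
Let $Q=E_{S_f}[0,\nu]$. I claim that $Qw|_V$ is injective. Suppose $y\in V$ is nonzero with $Qwy=0$. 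Then $x=wy$ is nonzero, by the lower bound on $w$, and $x\in E_{S_f}(\nu,\infty)H$. Lemma \ref{lemma-spectral-subspaces}(d) then gives $\langle S_fx,x\rangle>\nu\|x\|^2$, contradicting the display. Hence $\dim V\le\dim QH$, that is, $M_{S_{\tilde f}}(\mu)\le M_{S_f}(\nu)$. The point needing care is the use of $\|wy\|\ge\beta\|y\|$ to turn $\mu\|y\|^2$ into a bound in terms of $\|wy\|^2$; the strict inequality in Lemma \ref{lemma-spectral-subspaces}(d) is exactly what excludes $Qwy=0$.

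\emph{Semi-Fredholmness.} If $f$ is upper semi-Fredholm, Lemma \ref{lemma-usF}(iii) gives some $\nu_0>0$ with $M_{S_f}(\nu_0)<\infty$. Choosing $\mu=\beta^2\nu_0/C_1^2$ in the inequality above yields $M_{S_{\tilde f}}(\mu)<\infty$, so $\tilde f$ is upper semi-Fredholm by the same lemma. Note that the lower comparison constant in $\asymp$ is never used in any of these steps.
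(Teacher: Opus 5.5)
Your proposal is correct and follows essentially the same route as the paper: kernels are handled via $\rho(f)(wy)\le C_1\rho(\tilde f)(y)=0$ and injectivity of $w$. The small-spectrum bound uses $\|wy\|\ge\beta\|y\|$ together with the strict inequality of Lemma \ref{lemma-spectral-subspaces}(d) to show that the projection onto $E_{S_f}[0,\nu]H$ is injective on $w\,E_{S_{\tilde f}}[0,\mu]\tilde H$, and the semi-Fredholm conclusion follows from Lemma \ref{lemma-usF}(iii).
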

\begin{proof}
Write $S=S_f$, $\tilde S=S_{\tilde f}$. If $y\in\ker\tilde S$ then $\rho(f)(wy)\le C_1\rho(\tilde f)(y)=0$, so $wy\in\ker S$; as $w$ is injective, $k(\tilde f)\le k(f)$, and $\dim\tilde H\le\dim H$. Let $\mu>0$ and $\nu=C_1^2\beta^{-2}\mu$. For $y\in E_{\tilde S}[0,\mu]\tilde H$ we have $\langle\tilde Sy,y\rangle\le\mu\|y\|^2$ (Lemma \ref{lemma-spectral-subspaces}(c)), so for $x=wy$
\[\langle Sx,x\rangle=\rho(f)(wy)^2\le C_1^2\langle\tilde Sy,y\rangle\le C_1^2\mu\|y\|^2\le C_1^2\mu\beta^{-2}\|x\|^2=\nu\|x\|^2 .\]
Let $V=wE_{\tilde S}[0,\mu]\tilde H$; since $w$ is injective, $\dim V=M_{\tilde S}(\mu)$, and $\langle Sx,x\rangle\le\nu\|x\|^2$ for all $x\in V$. By Lemma \ref{lemma-spectral-subspaces}(d), nonzero vectors of $E_S(\nu,\infty)H$ satisfy $\langle Sx,x\rangle>\nu\|x\|^2$, so $V\cap E_S(\nu,\infty)H=0$, and the orthogonal projection onto $E_S[0,\nu]H=(E_S(\nu,\infty)H)^\perp$ is injective on $V$. Hence $M_{\tilde S}(\mu)=\dim V\le M_S(\nu)$. If $f$ is upper semi-Fredholm, $M_S(\nu_0)<\infty$ for some $\nu_0>0$ (Lemma \ref{lemma-usF}(iii)); choosing $\mu=\beta^2C_1^{-2}\nu_0$ gives $M_{\tilde S}(\mu)\le M_S(\nu_0)<\infty$, so $\tilde f$ is upper semi-Fredholm by Lemma \ref{lemma-usF}.
\end{proof}

\subsection{The reservoir lemma}

The following lemma is the key to the noncompact families that are not upper semi-Fredholm. Its idea: a noncompact operator $S$ has an infinite-dimensional spectral band at a fixed height $2^{-i_0}$, the \emph{reservoir}. To realize on $\tilde S$ a low eigenvalue $2^{-j}$ ($j>i_0$) inside the form of $S$ one does not need low eigenvalues of $S$ at all: a unit vector $\sqrt{1-t}\,a+\sqrt t\,b$ with $b$ in the reservoir and $t=2^{i_0-j}$ has form value $(1-t)\langle Sa,a\rangle+2^{-j}$, which lies between $2^{-j}$ and $2\cdot2^{-j}$ as soon as $a$ sits in a band of $S$ of index $\ge j$; the hypothesis $M_S(\mu)=\infty$ for all $\mu>0$ provides infinitely many such $a$, so everything can be done injectively. This is exactly what is excluded for invertible operators (Section \ref{section-inv}).

\begin{lemma}\label{lemma-reservoir}
Let $S\ge0$ be a bounded operator on a separable space $H$ that is noncompact and satisfies $M_S(\mu)=\infty$ for every $\mu>0$, and let $\tilde S\ge0$ be a bounded operator on a separable space $\tilde H$ with $\dim\ker\tilde S\le\dim\ker S$. Then there are an isometry $U:\tilde H\to H$ and constants $0<c\le C$ with $c\,\tilde S\le U^*SU\le C\,\tilde S$.
\end{lemma}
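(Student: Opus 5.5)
The plan is to build $U$ band by band, using the reservoir idea described before the statement.

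\emph{Setup.} Decompose $\tilde H=\ker\tilde S\oplus\bigoplus_{j\in\ZZ}\tilde H^j$ into the dyadic bands of $\tilde S$ (Lemma \ref{lemma-bands}), so that $\tilde d_j=\dim\tilde H^j$. Since $S$ is noncompact, Lemma \ref{lemma-compact} gives some $\lambda>0$ with $N_S(\lambda)=\infty$. Hence some band $H_{i_0}(S)$ is infinite-dimensional, because $E(\lambda,\infty)H$ is a finite union of bands. This band is the reservoir $R$. Split $R=R'\oplus R''$ with both summands infinite-dimensional, and split $R'=\bigoplus_{j\le i_0}R'_j$ into infinitely many infinite-dimensional pieces, one for each $j\le i_0$ with $d_j(\tilde S)>0$. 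Since $\tilde S$ is bounded, only finitely many $j\le i_0$ occur.

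\emph{High bands} ($j\le i_0$). Map $\tilde H^j$ isometrically into $R'_j$. On $R$ the form of $S$ lies in $(2^{-i_0-1},2^{-i_0}]$, while on $\tilde H^j$ the form of $\tilde S$ lies in $(2^{-j-1},2^{-j}]$. For the finitely many relevant $j$ these are comparable up to a constant $2^{i_0-j_{\min}+1}$.

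\emph{Low bands} ($j>i_0$). Use the hypothesis $M_S(\mu)=\infty$ for all $\mu$. Since $\dim E[0,2^{-j}]H=\infty$ for every $j$, and the kernel and bands of $S$ are mutually orthogonal, choose inductively pairwise orthogonal subspaces $A_j\subseteq E[0,2^{-j}]H\ominus R$ with $\dim A_j=\tilde d_j$. This is possible because removing finitely or countably many previously used dimensions can be arranged by a reserve splitting, as in Lemma \ref{lemma-embedding}. If $\ker S$ is infinite-dimensional, first set aside a copy of $\ker S$ of dimension $\dim\ker\tilde S$ for the kernel. Also choose pairwise orthogonal subspaces $B_j\subseteq R''$ with $\dim B_j=\tilde d_j$. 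Take orthonormal bases $(a_{j,n})$ of $A_j$, $(b_{j,n})$ of $B_j$ and $(e_{j,n})$ of $\tilde H^j$, put $t_j=2^{i_0-j}<1$, and set
\[Ue_{j,n}=\sqrt{1-t_j}\,a_{j,n}+\sqrt{t_j}\,b_{j,n}.\]
Because all the $a$'s and $b$'s are orthonormal and mutually orthogonal, these images are orthonormal. Because $R$ and $A_j$ reduce $S$ and are orthogonal (after choosing the $A_j$ inside spectral subspaces avoiding band $i_0$, or inside $\ker S$), the form on the image splits as $(1-t_j)\langle Sa,a\rangle+t_j\langle Sb,b\rangle$.

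\emph{Kernel.} Map $\ker\tilde S$ isometrically into $\ker S$; this is possible since $\dim\ker\tilde S\le\dim\ker S$. If $\ker S$ is finite-dimensional, the $A_j$ must avoid it, which is automatic since $E(0,2^{-j}]H$ is then infinite-dimensional.

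\emph{The estimate and the main obstacle.} The main obstacle is that $U^*SU$ is not block diagonal with respect to the bands $\tilde H^j$. Cross terms $\langle S a_{j,n},a_{j,m}\rangle$ and $\langle S b_{j,n},b_{j',m}\rangle$ with $n\ne m$ or $j\ne j'$ may be nonzero, and the per-vector bounds above do not control them. The fix is to choose everything as spectral vectors, so that no cross terms arise.
\begin{enumerate}[(a)]
\item On the reservoir, pick $B_j$ with $S$ acting diagonally enough. Finer still, replace $R$ by $E(\gamma,\gamma']H$ for a short interval where the dimension is infinite. Alternatively, bound only the form: $t_j\langle Sb,b\rangle$ lies in $[t_j2^{-i_0-1},t_j2^{-i_0}]\|b\|^2$ on all of $B_j$, with $B_j\subseteq R$ reducing $S$ and the $B_j$ chosen so that $\bigoplus_jB_j$ also reduces $S|_R$. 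Choosing the $B_j$ as spectral subspaces of $S|_R$ does this when possible. Otherwise, compress $S|_R$ through $c\,P_R\le S|_R\le C\,P_R$, which only changes constants.
\item On the $a$-part, use orthogonality of the decomposition $\bigoplus_j A_j$ with $A_j\subseteq E[0,2^{-j}]H$. This gives the upper bound $\langle S\sum_j\sqrt{1-t_j}a_j,\cdot\rangle\le\sum_j$, which is not automatic. To secure it, choose $A_j$ inside distinct bands or pieces of $\ker S$ that reduce $S$, so that the $A_j$ reduce $S$.
\end{enumerate}
With these choices, $U^*SU$ is comparable, up to absolute constants of the form $2^{\pm2}$ times the reservoir constants, to $\bigoplus_j2^{-j}\,\mathrm{id}_{\tilde H^j}\oplus0$. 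Lemma \ref{lemma-bands} shows that this operator is comparable to $\tilde S$, which gives $c\,\tilde S\le U^*SU\le C\,\tilde S$.
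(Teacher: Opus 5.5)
Your construction is the paper's: the reservoir band $H_{i_0}(S)$ is split into two infinite parts, the high bands of $\tilde S$ go into one of them, $e_{j,n}\mapsto\sqrt{1-t_j}\,a_{j,n}+\sqrt{t_j}\,b_{j,n}$ with $t_j=2^{i_0-j}$, and the kernel goes into the kernel. The part of the estimate living in $R$ is also fine. Since $R$ reduces $S$, the form of $Uy$ splits as $\langle Sx_R,x_R\rangle+\langle Sx_A,x_A\rangle$ with $x_R\in R$ and $x_A\perp R$. The bound $2^{-i_0-1}\le S\le2^{-i_0}$ on all of $R$ then handles the high part and the $b$-part with no cross terms, and it also gives the lower bound.

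\textbf{The gap.} It is the upper bound for $x_A=\sum_jx_j$ with $x_j\in A_j$. You flag this, but your remedies (choose the $A_j$ to reduce $S$, spanned by ``spectral vectors'', or inside distinct bands) are not available in general.
\begin{itemize}
\item Take $S=\mathrm{id}_{\ell^2}\oplus M$ with $M$ multiplication by $x$ on $L^2(0,\frac12)$. Then $S$ has no eigenvector in $E_S[0,\frac12]H$. So no nonzero finite-dimensional $A_j$ there reduces $S$, and there are no spectral vectors to choose.
\item Take $S=\mathrm{id}_{\ell^2}\oplus\operatorname{diag}(1/n)$ and $\tilde S=\bigoplus_{j\ge0}2^{-j}\mathrm{id}_{\ell^2}$ (the running example). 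Then $\tilde d_j=\infty$, while $\ker S=0$ and every band of $S$ of index $\ge1$ is finite-dimensional. So no $A_j$ fits into a single band.
\end{itemize}
The first case can be handled with distinct bands and the second with eigenvectors. However, you give no argument for a general $S$, and ``so that the $A_j$ reduce $S$'' is false as stated.

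\textbf{How the paper closes it.} It makes a preliminary reduction. With $E_i=E_S(2^{-i-1},2^{-i}]$, put $S^d=\sum_i2^{-i}E_i$. Then $\frac12S^d\le S\le S^d$, $S^d$ is a scalar on each band, and likewise $\tilde S\le\tilde S^d\le2\tilde S$. Fix one orthonormal basis of $H$ made of bases of the bands and of $\ker S$, and take all the vectors $p_y,a_y,b_y,z'$ to be distinct basis vectors. The images $Uy$ then have disjoint supports, so $U^*S^dU$ is diagonal, and your per-vector bounds are the whole estimate.

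\textbf{An alternative.} You may keep $S$ and arbitrary pairwise orthogonal $A_j\subseteq E_S[0,2^{-j}]H$. For $j\le j'$, $E_S[0,2^{-j'}]$ commutes with $S^{1/2}$ and $S\le2^{-j'}$ on its range, so
\[|\langle Sx_j,x_{j'}\rangle|=|\langle S^{1/2}E_S[0,2^{-j'}]x_j,S^{1/2}x_{j'}\rangle|\le2^{-j'}\|x_j\|\,\|x_{j'}\|=2^{-|j-j'|/2}\bigl(2^{-j/2}\|x_j\|\bigr)\bigl(2^{-j'/2}\|x_{j'}\|\bigr).\]
Schur's test then gives $\langle Sx_A,x_A\rangle\le(3+2\sqrt2)\sum_j2^{-j}\|x_j\|^2$.

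\textbf{Two smaller points.}
\begin{itemize}
\item When $\ker S$ and $\ker\tilde S$ are both infinite-dimensional, you must keep an infinite-dimensional part of $\ker S$ free for the $A_j$. For $S=\mathrm{id}_{\ell^2}\oplus0_{\ell^2}$ one has $E_S[0,2^{-j}]H=\ker S$ for all $j>i_0$; the paper handles this by splitting $Z=Z_1\sqcup Z_2$.
\item When infinitely many $\tilde d_j$ are infinite, your ``reserve splitting'' needs to be made explicit. The simplest way is the paper's: enumerate the basis of the low bands of $\tilde S$ and at each step pick an unused basis vector of a band of index $\ge j$ (or of $Z_1$), which exists because these sets are infinite.
\end{itemize}
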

\begin{proof}
\emph{Step 1: reduction to diagonal operators.} Let $E_i=E_S(2^{-i-1},2^{-i}]$, $H_i=E_iH$, and choose orthonormal bases $X_i$ of $H_i$ and $Z$ of $\ker S$; together they form an orthonormal basis $X=\bigsqcup_iX_i$, $X\sqcup Z$, of $H$ (Lemma \ref{lemma-bands}). Let $S^d=\sum_i2^{-i}E_i$ (zero on $\ker S$). On $H_i$ we have $2^{-i-1}\le S\le2^{-i}$ (Lemma \ref{lemma-bands}), hence $\frac12S^d\le S\le S^d$. Define $\tilde E_j,\tilde H_j,\tilde S^d$ and the orthonormal bases $Y_j$ of $\tilde H_j$, $\tilde Z$ of $\ker\tilde S$ similarly; then $\tilde S\le\tilde S^d\le2\tilde S$. It suffices to find an isometry $U$ and $c_0>0$ with
\[c_0\tilde S^d\le U^*S^dU\le2\tilde S^d,\]
because then $U^*SU\ge\frac12U^*S^dU\ge\frac12c_0\tilde S^d\ge\frac12c_0\tilde S$ and $U^*SU\le U^*S^dU\le2\tilde S^d\le4\tilde S$.

\emph{Step 2: the reservoirs.} Since $S$ is noncompact, $N_S(\lambda)=\infty$ for some $\lambda>0$ (Lemma \ref{lemma-compact}). Then $E_S(\lambda,\infty)H$ is infinite-dimensional and contained in a finite sum of bands (as $S$ is bounded), so some band is infinite-dimensional. Let $i_0$ be the smallest index with $X_{i_0}$ infinite, and split $X_{i_0}=P\sqcup Q$ with $P,Q$ infinite. If $\ker S$ is infinite-dimensional split $Z=Z_1\sqcup Z_2$ with both infinite; otherwise put $Z_1=\emptyset$ and $Z_2=Z$. For $j\in\ZZ$ let $A_j=\bigcup_{i\ge j}X_i\cup Z_1$. Each $A_j$ is infinite: $M_S(2^{-j})=\infty$ and $E_S[0,2^{-j}]H=\ker S\oplus\bigoplus_{i\ge j}H_i$ (Lemma \ref{lemma-bands}), so either $Z$ is infinite (and then $Z_1$ is) or $\bigcup_{i\ge j}X_i$ is infinite.

\emph{Step 3: definition of $U$ on the basis.} For $y\in Y=\bigsqcup_jY_j$ let $j(y)$ be its band. Let $Y_\mathrm{hi}=\{y:j(y)\le i_0\}$ and $Y_\mathrm{lo}=\{y:j(y)>i_0\}$; both are countable. Choose an injection $y\mapsto p_y$ of $Y_\mathrm{hi}$ into $P$. Enumerate $Y_\mathrm{lo}=\{y_1,y_2,\dots\}$ and choose inductively $a_{y_n}\in A_{j(y_n)}$ and $b_{y_n}\in Q$ distinct from all earlier choices of the same kind (possible since $A_j$ and $Q$ are infinite). Since $\dim\ker\tilde S\le\dim\ker S$, there is an injection $z\mapsto z'$ of $\tilde Z$ into $Z_2$ (if $\ker S$ is infinite, $Z_2$ is infinite; if it is finite, $Z_2=Z$). Put $t_y=2^{i_0-j(y)}\in(0,1)$ for $y\in Y_\mathrm{lo}$ and
\[Uy=p_y\ (y\in Y_\mathrm{hi}),\qquad Uy=\sqrt{1-t_y}\,a_y+\sqrt{t_y}\,b_y\ (y\in Y_\mathrm{lo}),\qquad Uz=z'\ (z\in\tilde Z),\]
extended linearly and continuously.

\emph{Step 4: $U$ is an isometry.} The vectors $a_y$ ($y\in Y_\mathrm{lo}$) are distinct, lie in bands of index $>i_0$ or in $Z_1$, hence differ from the $b$'s and the $p$'s (which lie in $X_{i_0}$) and from the $z'$ (which lie in $Z_2$). Hence the vectors $Uy$, $Uz$ have pairwise disjoint supports in the orthonormal basis $X\sqcup Z$ and have norm $1$ (as $(1-t_y)+t_y=1$): $U$ is an isometry.

\emph{Step 5: the estimate.} $S^d$ is diagonal in the basis $X\sqcup Z$, with value $2^{-i(x)}$ at $x\in X_{i(x)}$ and $0$ on $Z$. Since the vectors $Uv$ have disjoint supports, $\langle S^dUv,Uv'\rangle=0$ for distinct $v,v'\in Y\sqcup\tilde Z$, so $U^*S^dU$ is diagonal in $Y\sqcup\tilde Z$, with entries $e_y=\langle S^dUy,Uy\rangle$ and $0$ on $\tilde Z$ (as $Uz\in\ker S$); and $\tilde S^d$ is diagonal there with entries $2^{-j(y)}$ and $0$. Let $j_{\min}$ be the smallest band index with $Y_j\ne\emptyset$. For $y\in Y_\mathrm{hi}$ we have $e_y=2^{-i_0}\le2^{-j(y)}$ since $j(y)\le i_0$, and $e_y\ge2^{j_{\min}-i_0}2^{-j(y)}$ since $j(y)\ge j_{\min}$. For $y\in Y_\mathrm{lo}$ we have $e_y=(1-t_y)2^{-i(a_y)}+t_y2^{-i_0}$, where $t_y2^{-i_0}=2^{-j(y)}$ and $(1-t_y)2^{-i(a_y)}\le2^{-j(y)}$ because $i(a_y)\ge j(y)$ (or $a_y\in Z_1$, contributing $0$); hence $2^{-j(y)}\le e_y\le2\cdot2^{-j(y)}$. With $c_0=\min(1,2^{j_{\min}-i_0})$ we get $c_0\tilde S^d\le U^*S^dU\le2\tilde S^d$. (If $\tilde S=0$ there is no $Y$ and $U$ is an isometry of $\tilde H=\ker\tilde S$ into $\ker S$.)
\end{proof}

\subsection{The classification}

We now prove the classification for bounded below comparison operators.

\begin{theorem}\label{theorem-closed}
For Bessel families $f,\tilde f$ the following are equivalent:
\begin{enumerate}[(i)]
\item $f\simeq_\mathrm{bb}\tilde f$;
\item $f\simeq_\mathrm{iso}\tilde f$;
\item $f\simeq_\mathrm{inj}\tilde f$, and $f$ is upper semi-Fredholm if and only if $\tilde f$ is.
\end{enumerate}
In particular $\simeq_\mathrm{bb}=\simeq_\mathrm{iso}$.
\end{theorem}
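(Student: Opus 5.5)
The cycle I will prove is (ii)$\Rightarrow$(i)$\Rightarrow$(iii)$\Rightarrow$(ii). The implication (ii)$\Rightarrow$(i) is immediate from $\mathcal O_\mathrm{iso}\subseteq\mathcal O_\mathrm{bb}$ (Proposition \ref{proposition-inclusions}). For (i)$\Rightarrow$(iii), Proposition \ref{proposition-inclusions} already gives $f\simeq_\mathrm{inj}\tilde f$. Proposition \ref{proposition-necessary-cl}, applied to $w$ and to $\tilde w$, then shows that upper semi-Fredholmness transfers in both directions. So the content is (iii)$\Rightarrow$(ii), and I split it according to whether the families are upper semi-Fredholm.

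\emph{Case 1: both $f,\tilde f$ upper semi-Fredholm.} Write $k=k(f)=k(\tilde f)<\infty$. By Lemma \ref{lemma-usF}, $\delta P\le S\le\|S\|P$ with $P$ the projection onto $(\ker S)^\perp$, and similarly for $\tilde S$. Up to constants, $\rho(f)$ is therefore $\|P\,\cdot\|$, and the problem reduces to comparing the projections $P$ and $\tilde P$. The ranks must agree: $\simeq$ preserves the rank by Corollary \ref{corollary-classes}(a). Since the kernels also have equal dimension, $\dim H=\dim\tilde H$. Take a unitary $U:\tilde H\to H$ mapping $\ker\tilde S$ onto $\ker S$ and $(\ker\tilde S)^\perp$ onto $(\ker S)^\perp$. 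Then $U^*PU=\tilde P$, so $U^*SU\asymp\tilde S$, and $U^*$ works in the other direction. (If $S=0$ the ranks are $0$ and the statement is trivial.)

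\emph{Case 2: neither is upper semi-Fredholm.} By Lemma \ref{lemma-usF}(iii), $M_S(\mu)=\infty$ and $M_{\tilde S}(\mu)=\infty$ for every $\mu>0$. If $S$ is noncompact, then $\tilde S$ is noncompact as well (compactness is a $\simeq$-invariant, Corollary \ref{corollary-classes}(a)). The reservoir lemma (Lemma \ref{lemma-reservoir}), applied with $\dim\ker\tilde S\le\dim\ker S$, gives an isometry $U:\tilde H\to H$ with $U^*SU\asymp\tilde S$; applying it symmetrically gives $\tilde U$.

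If instead both are compact, I will not use the reservoir lemma, because it requires noncompactness. Here I expect the main obstacle: building an isometry from the exact operator $w$ of Theorem \ref{theorem-all}, which is merely bounded. My first attempt is diagonal. By Lemma \ref{lemma-compact}, $S$ and $\tilde S$ are diagonal with eigenvalues $\lambda_n$, $\tilde\lambda_n$, and these are comparable by Theorem \ref{theorem-singular}. I map the $n$-th eigenvector $\tilde x_n$ of $\tilde S$ to $x_n$, and the kernel of $\tilde S$ isometrically onto $\ker S$ (the kernel dimensions are equal by Theorem \ref{theorem-inj}). The result is a unitary $U$ with $U^*SU=\sum\lambda_n\tilde x_n\otimes\tilde x_n\asymp\tilde S$, so in the compact case the isometry even comes for free.

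The delicate point left is the noncompact, non-upper-semi-Fredholm case when $\ker S$ is finite but $M_S(\mu)=\infty$. The lemma covers this under the hypothesis of equal kernel dimensions, so no further work should be needed. I would nonetheless double-check that Lemma \ref{lemma-reservoir} places no additional condition on $\tilde S$ beyond $\dim\ker\tilde S\le\dim\ker S$.
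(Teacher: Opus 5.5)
Your proof is correct and uses the same three constructions as the paper: the eigenbasis unitary for compact families, the kernel-matching unitary with $\delta P\le S\le\|S\|P$ for upper semi-Fredholm families, and Lemma \ref{lemma-reservoir} for noncompact families that are not upper semi-Fredholm. The only difference is that you place the compact upper semi-Fredholm (finite-dimensional) families in the projection case rather than the eigenbasis case, which works equally well because equal rank and equal kernel dimension give the unitary; and, as you suspected, Lemma \ref{lemma-reservoir} imposes nothing on $\tilde S$ beyond $\dim\ker\tilde S\le\dim\ker S$.
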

\begin{proof}
Write $S=S_f$, $\tilde S=S_{\tilde f}$. (ii)$\Rightarrow$(i) holds by Proposition \ref{proposition-inclusions}.

\emph{(i)$\Rightarrow$(iii).} By Proposition \ref{proposition-inclusions} we have $f\simeq_\mathrm{inj}\tilde f$. If $w$ is bounded below with $\rho(f)\circ w\le C_1\rho(\tilde f)$ and $f$ is upper semi-Fredholm, then so is $\tilde f$ by Proposition \ref{proposition-necessary-cl}; the converse uses $\tilde w$.

\emph{(iii)$\Rightarrow$(ii).} By Theorem \ref{theorem-inj}, $f\simeq\tilde f$ and $k(f)=k(\tilde f)$. By Corollary \ref{corollary-classes}(a), $T_f$ is compact iff $T_{\tilde f}$ is. We construct isometries $U:\tilde H\to H$ and $V:H\to\tilde H$ with $c\tilde S\le U^*SU\le C\tilde S$ and $cS\le V^*\tilde SV\le CS$ (constants $0<c\le C$); by Proposition \ref{proposition-reduction}(a), this gives $f\simeq_\mathrm{iso}\tilde f$. There are three cases.

\emph{Case 1: $f,\tilde f$ compact.} Then $T_f,T_{\tilde f}$ have the same rank $r$ (Corollary \ref{corollary-classes}(a)). By Lemma \ref{lemma-compact} there are orthonormal bases $(x_n)_{n\le r}$ of $(\ker S)^\perp$ and $(\tilde x_n)_{n\le r}$ of $(\ker\tilde S)^\perp$ with $Sx_n=\lambda_nx_n$, $\tilde S\tilde x_n=\tilde\lambda_n\tilde x_n$, $\lambda_n=\lambda_n(S)$, $\tilde\lambda_n=\lambda_n(\tilde S)$. Since $k(f)=k(\tilde f)$ there is a unitary $w:\tilde H\to H$ with $w\tilde x_n=x_n$ and mapping $\ker\tilde S$ onto $\ker S$. For $y\in\tilde H$,
\[\langle w^*Swy,y\rangle=\langle Swy,wy\rangle=\sum_n\lambda_n|\langle y,\tilde x_n\rangle|^2,\qquad\langle\tilde Sy,y\rangle=\sum_n\tilde\lambda_n|\langle y,\tilde x_n\rangle|^2,\]
and $\lambda_n\asymp\tilde\lambda_n$ by Theorem \ref{theorem-singular}. So $c\tilde S\le w^*Sw\le C\tilde S$, and $U=w$ is an isometry; $V=w^{-1}$ is treated in the same way.

\emph{Case 2: noncompact and upper semi-Fredholm.} Here $k=k(f)=k(\tilde f)<\infty$ and $(\ker S)^\perp$, $(\ker\tilde S)^\perp$ are infinite-dimensional (noncompact operators have infinite rank). Hence there is a unitary $u:\tilde H\to H$ with $u(\ker\tilde S)=\ker S$ (unitary on the kernels, which have the same finite dimension, and on the complements, which are both infinite-dimensional and separable). Let $P,\tilde P$ be the projections onto $(\ker S)^\perp$, $(\ker\tilde S)^\perp$; then $u^*Pu=\tilde P$. By Lemma \ref{lemma-usF}, $\delta P\le S\le\|S\|P$ and $\tilde\delta\tilde P\le\tilde S\le\|\tilde S\|\tilde P$. Therefore $u^*Su\ge\delta u^*Pu=\delta\tilde P\ge(\delta/\|\tilde S\|)\tilde S$ and $u^*Su\le\|S\|\tilde P\le(\|S\|/\tilde\delta)\tilde S$. Take $U=u$, $V=u^{-1}$.

\emph{Case 3: noncompact and not upper semi-Fredholm.} By assumption $\tilde f$ is not upper semi-Fredholm either, and both are noncompact (Corollary \ref{corollary-classes}(a)); by Lemma \ref{lemma-usF}(iii), $M_S(\mu)=M_{\tilde S}(\mu)=\infty$ for all $\mu>0$. Since $\dim\ker\tilde S=\dim\ker S$, Lemma \ref{lemma-reservoir} provides an isometry $U:\tilde H\to H$ with $c\tilde S\le U^*SU\le C\tilde S$, and, exchanging the roles, an isometry $V:H\to\tilde H$ with $cS\le V^*\tilde SV\le CS$.
\end{proof}

We obtain the description of the classes of $\simeq_\mathrm{bb}=\simeq_\mathrm{iso}$.

\begin{corollary}\label{corollary-closed}
\begin{enumerate}[(a)]
\item On compact families, $\simeq_\mathrm{bb}=\simeq_\mathrm{iso}$ coincides with $\simeq_\mathrm{inj}$.
\item The noncompact families split into one class $(k,\mathrm{sF})$ for each finite $k$ (the upper semi-Fredholm families with $k(f)=k$) and one class $(k,\mathrm{nsF})$ for each $k\in\{0,1,\dots,\infty\}$ (the noncompact families that are not upper semi-Fredholm and have $k(f)=k$). In particular $\simeq_\mathrm{bb}$ is strictly finer than $\simeq_\mathrm{inj}$ on noncompact families.
\end{enumerate}
\end{corollary}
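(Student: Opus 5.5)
The plan is to read both parts off Theorem \ref{theorem-closed}, which says that $\simeq_\mathrm{bb}=\simeq_\mathrm{iso}$ is $\simeq_\mathrm{inj}$ together with the invariant ``upper semi-Fredholm or not''. The work is to see where this extra invariant is automatic and where it is new, and then to check that every announced class is nonempty.

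\emph{Part (a).} Suppose $f$ and $\tilde f$ are compact. If $f\simeq_\mathrm{inj}\tilde f$, then by Theorem \ref{theorem-closed}(iii) it suffices to check that $f$ is upper semi-Fredholm if and only if $\tilde f$ is. By Theorem \ref{theorem-inj} and Corollary \ref{corollary-classes}(a), the two families have the same rank $r$ and the same kernel dimension $k$.

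For a compact $S$, I claim that $S$ is upper semi-Fredholm if and only if $k<\infty$ and $r<\infty$.
\begin{enumerate}[(1)]
\item If $r<\infty$, then $S$ has finitely many positive eigenvalues, so $M_S(\mu)=k$ for $\mu$ below the smallest of them. Lemma \ref{lemma-usF} then gives upper semi-Fredholmness exactly when $k<\infty$.
\item If $r=\infty$, then $\lambda_n(S)\to0$ with $\lambda_n(S)>0$ for all $n$ (Lemma \ref{lemma-compact}). Hence $E[0,\mu]H$ contains infinitely many eigenvectors for every $\mu>0$, so $M_S(\mu)=\infty$, and $S$ is not upper semi-Fredholm by Lemma \ref{lemma-usF}(iii).
\end{enumerate}
So upper semi-Fredholmness of a compact family is determined by the pair $(r,k)$. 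This pair is shared by $\simeq_\mathrm{inj}$-equivalent families, which gives (a). The reverse inclusion $\simeq_\mathrm{bb}\subseteq\simeq_\mathrm{inj}$ is Proposition \ref{proposition-inclusions}.

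\emph{Part (b), description of the classes.} By Corollary \ref{corollary-inj}, the noncompact families form one $\simeq_\mathrm{inj}$-class for each $k$. Theorem \ref{theorem-closed} splits each of these classes according to whether the family is upper semi-Fredholm. Upper semi-Fredholm requires $k<\infty$ by definition, so for $k=\infty$ only the nsF class can occur. This gives exactly the listed classes, provided each one is nonempty.

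\emph{Part (b), nonemptiness.} I will realize each class by a frame operator, which is possible by Lemma \ref{lemma-basic}(c).
\begin{enumerate}[(1)]
\item For $(k,\mathrm{sF})$ with $k$ finite, take $S=\mathrm{id}_{\ell^2}\oplus0_{\FF^k}$. It is noncompact, and it is upper semi-Fredholm by Lemma \ref{lemma-usF}(ii): its kernel is finite-dimensional and its range is closed.
\item For $(k,\mathrm{nsF})$, take $S=\mathrm{id}_{\ell^2}\oplus\operatorname{diag}(1/n)\oplus0_{\FF^k}$, with $0_{\ell^2}$ in place of $0_{\FF^k}$ when $k=\infty$. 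It is noncompact because of the identity summand. It satisfies $M_S(\mu)=\infty$ for every $\mu>0$ because the eigenvalues $1/n$ accumulate at $0$, so it is not upper semi-Fredholm.
\end{enumerate}

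\emph{Strictness.} For each finite $k$, the classes $(k,\mathrm{sF})$ and $(k,\mathrm{nsF})$ are distinct $\simeq_\mathrm{bb}$-classes contained in a single $\simeq_\mathrm{inj}$-class. This shows that $\simeq_\mathrm{bb}$ is strictly finer than $\simeq_\mathrm{inj}$ on noncompact families.

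The only step that needs any care is the claim in part (a) that a compact operator of infinite rank has $M_S(\mu)=\infty$ for every $\mu>0$. Everything else follows directly from Theorem \ref{theorem-closed} and Corollary \ref{corollary-inj}.
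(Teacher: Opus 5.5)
Your proposal is correct and follows the paper's proof essentially step by step. In (a) you show that upper semi-Fredholmness of a compact family is determined by its rank and kernel dimension, using $M_S(\mu)=\infty$ for compact operators of infinite rank; the only cosmetic difference is that you handle the finite-rank case through $M_S$ and Lemma \ref{lemma-usF}, where the paper uses closedness of the range. In (b) you subdivide the $\simeq_\mathrm{inj}$-classes of Corollary \ref{corollary-inj} via Theorem \ref{theorem-closed}, with the same witnesses $\mathrm{id}_{\ell^2}\oplus0_{\FF^k}$ and $\mathrm{id}_{\ell^2}\oplus\operatorname{diag}(1/n)\oplus0_{\FF^k}$.
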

\begin{proof}
(a) A compact family of infinite rank is not upper semi-Fredholm: by Lemma \ref{lemma-compact} it has infinitely many orthonormal eigenvectors $x_n$ with eigenvalues $\lambda_n\to0$, so $E_S[0,\mu]H$ contains all $x_n$ with $\lambda_n\le\mu$ and $M_S(\mu)=\infty$ for every $\mu>0$. A compact family of finite rank has closed range, so it is upper semi-Fredholm iff $k<\infty$ iff $\dim H<\infty$. Now let $f\simeq_\mathrm{inj}\tilde f$ be compact. They have the same rank (Corollary \ref{corollary-classes}(a)) and the same $k$, so both are or are not upper semi-Fredholm simultaneously, and Theorem \ref{theorem-closed}(iii) holds.

(b) By Theorem \ref{theorem-closed} and Corollary \ref{corollary-inj}, the noncompact $\simeq_\mathrm{bb}$-classes are the classes of $\simeq_\mathrm{inj}$ (indexed by $k$) subdivided by whether the family is upper semi-Fredholm. A noncompact upper semi-Fredholm family has $k<\infty$. All values occur: $S_f=\mathrm{id}_{\ell^2}\oplus0_{\FF^k}$ is upper semi-Fredholm for finite $k$, and $S_f=\mathrm{id}_{\ell^2}\oplus\operatorname{diag}(1/n)\oplus0_{\FF^k}$ (with $0_{\ell^2}$ for $k=\infty$) is noncompact and not upper semi-Fredholm. For $k$ finite, the families $\mathrm{id}_{\ell^2}\oplus0_{\FF^k}$ and $\mathrm{id}_{\ell^2}\oplus\operatorname{diag}(1/n)\oplus0_{\FF^k}$ are $\simeq_\mathrm{inj}$- but not $\simeq_\mathrm{bb}$-equivalent.
\end{proof}

Finally, we characterize the families that are $\simeq_\mathrm{bb}$-equivalent to an orthonormal basis.

\begin{proposition}\label{proposition-frames}
Let $f\in\mathcal B(\Omega,H)$ and let $e$ be an orthonormal basis of $\tilde H$. Then $f\simeq_\mathrm{bb}e$ if and only if $f$ is a frame and $\dim H=\dim\tilde H$.
\end{proposition}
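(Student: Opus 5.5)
The plan is to reduce everything to Theorem \ref{theorem-closed} and to the explicit frame operator $S_e=\mathrm{id}_{\tilde H}$. Note first that $e$ has $k(e)=0$ and $\ran T_e$ closed, so $e$ is upper semi-Fredholm. It is compact exactly when $\dim\tilde H<\infty$, and its singular numbers are $s_n(e)=1$ for $n\le\dim\tilde H$ and $0$ afterwards.

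\emph{($\Leftarrow$).} Suppose $f$ is a frame with $\dim H=\dim\tilde H$. Then $A\,\mathrm{id}\le S_f\le B\,\mathrm{id}$ with $A>0$. Choose any unitary $u:\tilde H\to H$, which exists because the dimensions agree and both spaces are separable. Then
\[A\,\mathrm{id}_{\tilde H}\le u^*S_fu\le B\,\mathrm{id}_{\tilde H}=B\,S_e,\]
and symmetrically $A\,S_f\le B\,\mathrm{id}=B\,(u^{-1})^*S_e u^{-1}$ with $(u^{-1})^*S_eu^{-1}=\mathrm{id}_H\le A^{-1}S_f$. By Proposition \ref{proposition-reduction}(a), $f\simeq_\mathrm{uni}e$, and hence $f\simeq_\mathrm{bb}e$ by Proposition \ref{proposition-inclusions}. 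This direction needs no case analysis.

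\emph{($\Rightarrow$).} Let $w:\tilde H\to H$ and $\tilde w:H\to\tilde H$ be bounded below and realize $f\simeq_\mathrm{bb}e$. Proposition \ref{proposition-necessary-cl} applied to $w$ gives $\dim\tilde H\le\dim H$, and applied to $\tilde w$ it gives $\dim H\le\dim\tilde H$. Hence $\dim H=\dim\tilde H$. The same proposition applied to $\tilde w$ gives $k(f)\le k(e)=0$, so $f$ is injective, and since $e$ is upper semi-Fredholm it also shows that $f$ is upper semi-Fredholm. (Equivalently, these two facts follow from Theorem \ref{theorem-closed}(iii) together with Theorem \ref{theorem-inj}.)

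Thus $T_f$ is injective with closed range, and therefore bounded below by the open mapping theorem, which is exactly the frame condition of Definition \ref{definition-bessel}. Alternatively, Lemma \ref{lemma-usF} gives $\delta P\le S_f$ with $P=\mathrm{id}$, since $\ker S_f=0$.

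The only delicate point is the bookkeeping of which direction of Proposition \ref{proposition-necessary-cl} gives which inequality. The upper-bound hypothesis $\rho(e)\circ\tilde w\le C_1\rho(f)$ is the one that transfers kernel and semi-Fredholmness from $e$ to $f$. The degenerate case $H=\tilde H=0$ is trivial, since the empty family is then both a frame and equivalent to $e$.
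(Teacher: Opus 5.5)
Your proof is correct, but both directions take a different route from the paper's.

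For ($\Rightarrow$), the paper does not use Proposition \ref{proposition-necessary-cl} or Lemma \ref{lemma-usF}. From $\rho(f)\ge c\,\|\tilde w\,\cdot\|$ and $\|\tilde wx\|\ge\beta\|x\|$ it reads off $\|T_fx\|\ge c\beta\|x\|$ in one line. So the frame property follows directly from the same hypothesis you single out, the comparison along $\tilde w$. There is no detour through $k(f)=0$, the transfer of upper semi-Fredholmness via the small-spectrum counts $M_S$, and the open mapping theorem. The equality of dimensions comes, as in your argument, from the injectivity of $w$ and $\tilde w$.

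For ($\Leftarrow$), the paper takes $w=S_f^{-1/2}U$ and $\tilde w=U^{-1}S_f^{1/2}$. These realize the comparison exactly, $\rho(f)\circ w=\|\cdot\|$ and $\|\tilde w\,\cdot\|=\rho(f)$, with invertible operators. You instead use the unitaries $u,u^{-1}$ themselves and absorb the frame bounds into the constants. Your version gives $f\simeq_\mathrm{uni}e$ at once, whereas the paper's gives exact pairs and later needs Theorem \ref{theorem-inv} to pass from $\simeq_\mathrm{inv}$ to $\simeq_\mathrm{uni}$ in Theorem \ref{theorem-onb}(d).

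One slip to fix: the inequality ``$A\,S_f\le B\,\mathrm{id}$'' is false in general. For example, $S_f=4\,\mathrm{id}$ with $A=B=4$ gives $16\,\mathrm{id}\le 4\,\mathrm{id}$. What you need, and what the frame bounds give, is $B^{-1}S_f\le(u^{-1})^*S_eu^{-1}=\mathrm{id}_H\le A^{-1}S_f$.
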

\begin{proof}
Here $S_e=\mathrm{id}_{\tilde H}$ and $\rho(e)=\|\cdot\|$.

\emph{($\Rightarrow$)} Let $w:\tilde H\to H$ and $\tilde w:H\to\tilde H$ be bounded below with $\rho(f)\circ w\asymp\|\cdot\|$ and $\|\tilde w\,\cdot\|\asymp\rho(f)$. Then $\rho(f)(x)\ge c\|\tilde wx\|\ge c\beta\|x\|$, so $f$ is a frame; and $w,\tilde w$ are injective in both directions, so $\dim\tilde H\le\dim H\le\dim\tilde H$.

\emph{($\Leftarrow$)} Let $S=S_f$; since $f$ is a frame, $a\le S\le\|S\|$ with $a>0$, so $S^{\pm1/2}$ are bounded and $S^{-1/2}$ is bounded below by $\|S\|^{-1/2}$. Let $U:\tilde H\to H$ be unitary (the dimensions agree). Then $w=S^{-1/2}U$ is bounded below and $\rho(f)(wy)=\|S^{1/2}S^{-1/2}Uy\|=\|y\|$, and $\tilde w=U^{-1}S^{1/2}$ is bounded below with $\|\tilde wx\|=\|S^{1/2}x\|=\rho(f)(x)$.
\end{proof}

\begin{example}[Running example, continued]\label{example-running-closed}
Neither $S_1$ nor $S_2$ is bounded below and $M_{S_1}(\mu)=M_{S_2}(\mu)=\infty$ for every $\mu>0$, so both families lie in the class $(0,\mathrm{nsF})$ and $f\simeq_\mathrm{bb}\tilde f\simeq_\mathrm{iso}f$ by Theorem \ref{theorem-closed}; the isometry $U:\tilde H\to H$ is produced by Lemma \ref{lemma-reservoir}, with $i_0=0$, the reservoir $Q$ inside the eigenspace of the eigenvalue $1$ of $S_1$, and the vectors $a_y$ among the eigenvectors $e_n$ of $\operatorname{diag}(1/n)$. By Proposition \ref{proposition-frames}, neither family is $\simeq_\mathrm{bb}$-equivalent to an orthonormal basis, since neither is a frame.
\end{example}

\section{Invertible and unitary comparison operators}
\label{section-inv}

For invertible comparison operators the counting functions are no longer sufficient: one needs to control the multiplicities of the spectrum on the logarithmic scale.

\medskip
\noindent\textbf{Strategy.} Theorem \ref{theorem-inv} is proved by the cycle (i)$\Rightarrow$(iv)$\Rightarrow$(v)$\Rightarrow$(vi)$\Rightarrow$(iii)$\Rightarrow$(ii)$\Rightarrow$(i), and each arrow is one lemma. The point is that comparing multiplicities on the logarithmic scale reduces to a bounded-displacement matching of eigenvalue bands.
\begin{itemize}
\item \emph{Window lemma} (Lemma \ref{lemma-window}): an invertible $w$ with $c\tilde S\le w^*Sw\le C\tilde S$ cannot send a spectral window $(\alpha,\beta]$ of $\tilde S$ outside a window $(\alpha/K,K\beta]$ of $S$. A vector $x=wy$ with $y$ in the window can leave the window of $S$ only by combining a part of $S$ below the window with a \emph{small} part of $S$ above it, whose form is nevertheless of the order of $\beta$. This is the leak exploited in Lemma \ref{lemma-reservoir}; it is excluded here because $w^{-1}$ is bounded, which allows one to subtract the high part of $x$ inside the range of $w$ at a controlled cost. The comparison of dimensions is then made by a perturbation argument: two almost-nested subspaces differ by at most the dimension of the extra window.
\item \emph{Matching lemma} (Lemma \ref{lemma-matching}): the window inequalities, summed over dyadic bands, say that the multiplicity sequences $(d_j)$, $(\tilde d_j)$ satisfy Hall's condition for the bipartite graph in which an eigenvalue of $\tilde S$ is joined to the eigenvalues of $S$ at distance at most $m$ in the band index. Hall's theorem (for infinite families of finite sets, with infinite multiplicities treated separately) and the Cantor--Bernstein construction give a bijection of bounded displacement.
\item \emph{Unitary lemma} (Lemma \ref{lemma-unitary}): a bijection of displacement $m$ between eigenvector bases defines a unitary $u$ which sends a band of $\tilde S$ into a window of $2m+1$ bands of $S$; the quadratic forms change by at most the factor $2^{m+1}$ band by band, and the $(2m+1)$ residue classes of bands modulo $2m+1$ are orthogonal families, which gives an additional factor $2m+1$.
\end{itemize}
Corollary \ref{corollary-inv}(c) then counts the classes, by encoding $2^{\aleph_0}$ pairwise inequivalent multiplicity sequences with sparse infinite sets of bands.

\subsection{Three lemmas}

We first state the combinatorial language.

\begin{definition}\label{definition-lambda}
Let $d:\ZZ\to\{0,1,2,\dots,\infty\}$. Its \textbf{level set} is
\[\Lambda(d)=\{(j,i):j\in\ZZ,\ i\in\NN,\ 1\le i\le d_j\}\]
(for $d_j=\infty$ all $i\ge1$ are allowed); $\Lambda(d)$ is countable. The \textbf{level} of $(j,i)$ is $j$. For integers $a\le b$ put $d[a,b]=\sum_{j=a}^bd_j\in\{0,\dots,\infty\}$. For two such functions $d,\tilde d$ and $m\in\NN$, a map $\chi:\Lambda(\tilde d)\to\Lambda(d)$ has \textbf{displacement at most $m$} if the levels of $x$ and $\chi(x)$ differ by at most $m$ for all $x$. We say that $(d_j)$ and $(\tilde d_j)$ are \textbf{related by a bijection of bounded displacement} if there are $m$ and a bijection $\chi:\Lambda(\tilde d)\to\Lambda(d)$ of displacement at most $m$. For a positive operator $S$ we apply this to $d=(d_j(S))_j$, so that $d[a,b]=d_{[a,b]}(S)$ in the notation of Definition \ref{definition-invariants}; the points of $\Lambda(d)$ are then the eigenvalues of $S$ counted with multiplicity, grouped by dyadic level (in the compact case).
\end{definition}

The first lemma controls the multiplicity of the spectrum of $\tilde S$ in a window by that of $S$. It is stated for Fredholm comparison operators, with an additive error, because the same argument is needed in Section \ref{section-further}; for invertible $w$ the error vanishes.

\begin{lemma}[Window lemma]\label{lemma-window}
Let $S\ge0$, $\tilde S\ge0$ be bounded operators on $H$, $\tilde H$, and let $w:\tilde H\to H$ be a nonzero Fredholm operator, $k=\dim\ker w$, $q=\dim(H/\ran w)$, with $c\tilde S\le w^*Sw\le C\tilde S$. Put $a=\|w\|^2$, let $b>0$ be such that $\|wy\|^2\ge b^{-1}\|y\|^2$ for $y\in(\ker w)^\perp$ (it exists since $\ran w$ is closed), and $K=\max\{1,2a/c,2bC\}$. Then
\[W_{\tilde S}(\alpha,\beta)\le W_S(\alpha/K,K\beta)+2(k+q)\qquad\text{for all }0<\alpha<\beta .\]
If $w$ is invertible, then $k=q=0$ (with $b=\|w^{-1}\|^2$), $\dim\ker\tilde S=\dim\ker S$ and $W_{\tilde S}(\alpha,\beta)\le W_S(\alpha/K,K\beta)$.
\end{lemma}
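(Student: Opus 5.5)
The plan is to compare subspaces through $w$ and to reduce everything to injectivity of a suitable linear map, as in Propositions \ref{proposition-counting} and \ref{proposition-necessary-cl}.

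\emph{Setup and the kernel assertion.} Put $Y=E_{\tilde S}(\alpha,\beta]\tilde H$, so $\dim Y=W_{\tilde S}(\alpha,\beta)$. By Lemma \ref{lemma-spectral-subspaces}(c),(d), $\alpha\|y\|^2<\langle\tilde Sy,y\rangle\le\beta\|y\|^2$ for $0\ne y\in Y$. Let $Y'=Y\cap(\ker w)^\perp$; it has codimension at most $k$ in $Y$, and $w$ is injective on it. For $y\in Y'$ and $x=wy$ we get
\[\frac{c\alpha}{a}\|x\|^2\le c\alpha\|y\|^2<\langle Sx,x\rangle\le C\beta\|y\|^2\le Cb\beta\|x\|^2 .\]
Decompose $x=x_-+x_0+x_+$ along $E_S[0,\alpha/K]$, $E_S(\alpha/K,K\beta]$ and $E_S(K\beta,\infty)$; these pieces are orthogonal and the form splits accordingly. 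The goal is that $y\mapsto x_0$ is injective on a subspace of $Y'$ of codimension at most $k+2q$, which gives the bound with room to spare in the constant $2(k+q)$. For invertible $w$ the kernel claim is separate and easy. First, $w(\ker\tilde S)\subseteq\ker S$ by the upper inequality. Second, $y\in w^{-1}(\ker S)$ gives $c\langle\tilde Sy,y\rangle\le\langle Sx,x\rangle=0$ by the lower inequality. So $w$ maps $\ker\tilde S$ bijectively onto $\ker S$.

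\emph{Low part.} If $x_0=x_+=0$, then $\langle Sx,x\rangle\le(\alpha/K)\|x\|^2<(c\alpha/a)\|x\|^2$, because $K\ge2a/c$; this contradicts the lower bound. The same argument shows that the part of $x$ below the window cannot carry all of the form. The danger is therefore only the high part $x_+$.

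\emph{High part (main obstacle).} This is the leak of Lemma \ref{lemma-reservoir}: $x_+$ has small norm, since $K\beta\|x_+\|^2<\langle Sx_+,x_+\rangle\le Cb\beta\|x\|^2$ gives $\|x_+\|^2\le\frac12\|x\|^2$. Nevertheless its form may be of order $\beta\|x\|^2$, so it can compensate for $x_-$ when $x_0=0$. To exclude this I would pull the high spectral subspace back through $w$. Let $Z=E_S(K\beta,\infty)H\cap\ran w$, which has codimension at most $q$ in $E_S(K\beta,\infty)H$, and let $V=\{v\in(\ker w)^\perp:wv\in Z\}$. For $v\in V$,
\[\langle\tilde Sv,v\rangle\ge C^{-1}\langle Swv,wv\rangle>C^{-1}K\beta\|wv\|^2\ge2\beta\|v\|^2 ,\]
using $K\ge2bC$. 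Since $Y$ carries form at most $\beta$, every vector of $V$ makes a definite angle with $Y$. Concretely, with $F=E_{\tilde S}(2\beta\cdot\tfrac12,\infty)=E_{\tilde S}(\beta,\infty)$, the vectors of $V$ have a dominant component in $F\tilde H\perp Y$.

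Now suppose $x_0=0$, and first assume $x_+\in Z$, which can be arranged on a subspace of $Y'$ of codimension at most $q$ (choose it so that the projection of $x_+$ onto the complement of $Z$ vanishes). Put $v=(w|_{(\ker w)^\perp})^{-1}x_+\in V$ and $u=y-v$. Then $wu=x_-$, so $c\langle\tilde Su,u\rangle\le(\alpha/K)\|x_-\|^2\le(\alpha a/K)\|u\|^2$. On the other hand $y\perp F\tilde H$ while $v$ has a large $F$-component. Expanding $\langle\tilde S(y-v),y-v\rangle$ in the reducing decomposition $\tilde H=F\tilde H\oplus F\tilde H^\perp$, the two pieces cannot cancel. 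This should force $y=0$ for the stated $K$; if the constants turn out to be too tight I would enlarge $K$, which only weakens the conclusion in a harmless way. This estimate is the step I expect to require the most care.

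The remaining $q$-dimensional discrepancy enters a second time, through the lower-bound side, which explains the factor $2$. Counting dimensions, the kernel of $y\mapsto x_0$ on $Y$ has dimension at most $k+2q\le2(k+q)$. Hence $W_{\tilde S}(\alpha,\beta)\le W_S(\alpha/K,K\beta)+2(k+q)$, and for invertible $w$ the error term is $0$.
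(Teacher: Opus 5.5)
Your route differs from the paper's and it works, but as written it stops at the one nontrivial point. After defining $v=(w|_{(\ker w)^\perp})^{-1}x_+$ and $u=y-v$, you only assert that ``the two pieces cannot cancel'' and that this ``should force $y=0$'', and you keep enlarging $K$ as a fallback. That fallback is not available, because the lemma fixes $K=\max\{1,2a/c,2bC\}$; a larger $K$ proves a weaker statement. Your claim that $q$ ``enters a second time, through the lower-bound side'' is also not justified by anything you wrote, so the count $k+2q$ is not derived. As it stands, the proposal identifies the mechanism but leaves the decisive inequality unproved.

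The gap closes with the stated $K$, and with a better count. Let
\[
Y_0=\{y\in E_{\tilde S}(\alpha,\beta]\tilde H\cap(\ker w)^\perp:\ E_S(K\beta,\infty)wy\in\ran w\},
\]
which has codimension at most $k+q$ in the window. If $y\in Y_0$ and $x_0=0$, then $x_-=wy-x_+$ lies in $\ran w$ automatically, so no second $q$ is lost. Write $Q(z)=\langle\tilde Sz,z\rangle$. Your own estimates give $Q(v)\ge2\beta\|v\|^2$, and $Q(u)\le\frac\alpha2\|u\|^2$ using $K\ge2a/c$.

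Now split $u=u_1+u_2+u_3$ along the reducing subspaces $E_{\tilde S}[0,\alpha]$, $E_{\tilde S}(\alpha,\beta]$ and $E_{\tilde S}(\beta,\infty)$. Since $y$ lies in the middle band, $v=-u_1+(y-u_2)-u_3$. The outer bands therefore cancel:
\[
Q(v)-Q(u)=Q(y-u_2)-Q(u_2)\le\beta\|y-u_2\|^2-\alpha\|u_2\|^2 .
\]
On the other hand $Q(v)-Q(u)\ge2\beta\|v\|^2-\frac\alpha2\|u\|^2$. Comparing the two bounds gives
\[
(2\beta-\tfrac\alpha2)(\|u_1\|^2+\|u_3\|^2)+\beta\|y-u_2\|^2+\tfrac\alpha2\|u_2\|^2\le0,
\]
so $u_2=0$ and then $y=u_2=0$.

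Hence $y\mapsto E_S(\alpha/K,K\beta]wy$ is injective on $Y_0$, and
\[
W_{\tilde S}(\alpha,\beta)\le W_S(\alpha/K,K\beta)+k+q,
\]
which is stronger than the statement. For invertible $w$, $Y_0$ is the whole window. Your kernel argument in the invertible case is fine.

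The paper does not compress the window directly. It shows that $G_1=w^{-1}(E_S[0,\alpha'])$ is almost contained in $E_{\tilde S}[0,\alpha]$. It then shows that a codimension-$e'$ part of $E_{\tilde S}[0,\beta]$ is almost contained in $G_2=w^{-1}(E_S[0,\beta'])$. These almost-nested pairs are compared by a Neumann-series perturbation, which pays $e'\le k+q$ twice and explains the constant $2(k+q)$. Your compression argument, once the three-band computation above is supplied, is shorter and purely algebraic. It also gives the sharper additive constant $k+q$.
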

\begin{proof}
\emph{Step 0: set-up.} Fix $0<\alpha<\beta$ and put $\alpha'=c\alpha/(2a)$, $\beta'=2bC\beta$. Since $b^{-1}\le a$ and $c\le C$ we have $\alpha'<\beta'$, and $(\alpha',\beta']\subseteq(\alpha/K,K\beta]$, so $m:=\dim E_S(\alpha',\beta']H\le W_S(\alpha/K,K\beta)$; it suffices to prove $W_{\tilde S}(\alpha,\beta)\le m+2(k+q)$, and we may assume $m<\infty$. Let
\[L=E_S[0,\alpha']H,\quad M=E_S(\alpha',\beta']H,\quad T=E_S(\beta',\infty)H,\]
\[Z_1=E_{\tilde S}[0,\alpha]\tilde H,\quad V=E_{\tilde S}(\alpha,\beta]\tilde H,\quad Z_2=Z_1\oplus V=E_{\tilde S}[0,\beta]\tilde H,\]
$\tilde Z=(\ker w)^\perp$, and $G_1=w^{-1}(L)\subseteq G_2=w^{-1}(L\oplus M)$; these are closed subspaces of $\tilde H$ because $w$ is bounded. For a closed subspace $X$ write $P_X$ for the orthogonal projection onto $X$. The dimension of $V$ is $W_{\tilde S}(\alpha,\beta)$.

\emph{Step 1: $G_1$ is almost contained in $Z_1$.} Let $g\in G_1$ and $x=wg\in L$. Then
\[c\langle\tilde Sg,g\rangle\le\langle Sx,x\rangle\le\alpha'\|x\|^2\le\alpha'a\|g\|^2=\tfrac12c\alpha\|g\|^2 .\]
On the other hand, since $Z_1$ and $Z_1^\perp=E_{\tilde S}(\alpha,\infty)\tilde H$ reduce $\tilde S$ and $\tilde S\ge\alpha$ on $Z_1^\perp$, we have $\langle\tilde Sg,g\rangle\ge\alpha\|(1-P_{Z_1})g\|^2$. Hence
\[\|(1-P_{Z_1})g\|^2\le\tfrac12\|g\|^2\qquad(g\in G_1).\tag{6.1}\]

\emph{Step 2: a large part of $Z_2$ is almost contained in $G_2$.} Let $\theta:Z_2\to H/\ran w$ be the linear map $z\mapsto[P_Twz]$; it is continuous with values in a space of dimension $q$, so $\ker\theta$ is closed of codimension at most $q$ in $Z_2$. Put $Y=Z_2\cap\tilde Z\cap\ker\theta$; it is a closed subspace of $Z_2$ of codimension $e'\le k+q$. Let $z\in Y$ and $x=wz$. Then $P_Twz\in\ran w$, hence $x-P_Tx=E_S[0,\beta']x\in\ran w$, and since $z\in\tilde Z$ there is a unique $z'\in\tilde Z$ with $wz'=x-P_Tx$; as $x-P_Tx\in L\oplus M$ we have $z'\in G_2$. Moreover $z-z'\in\tilde Z$ and $w(z-z')=P_Tx$, so $\|z-z'\|^2\le b\|P_Tx\|^2$. Since $T$ reduces $S$ and $S\ge\beta'$ on $T$, and $z\in E_{\tilde S}[0,\beta]\tilde H$,
\[\beta'\|P_Tx\|^2\le\langle Sx,x\rangle\le C\langle\tilde Sz,z\rangle\le C\beta\|z\|^2 .\]
Therefore
\[\operatorname{dist}(z,G_2)^2\le\|z-z'\|^2\le\frac{bC\beta}{\beta'}\|z\|^2=\tfrac12\|z\|^2\qquad(z\in Y).\tag{6.2}\]

\emph{Step 3: two dimension counts.} First, $w$ induces an injective linear map $G_2/G_1\to(L\oplus M)/L\cong M$, so $\dim(G_2/G_1)\le m$. Second, let $G_1^0=\{g\in G_1:g\perp(Z_2\ominus Y)\}$; it is the kernel of the map $G_1\to Z_2\ominus Y$, $g\mapsto P_{Z_2\ominus Y}g$, so $\dim(G_1/G_1^0)\le e'$. Put $N'=G_2\ominus G_1^0$. Then $\dim N'=\dim(G_2/G_1^0)\le m+e'$. For $g\in G_1^0$ the vector $P_{Z_2}g\in Z_2$ has no component in $Z_2\ominus Y$, so $P_{Z_2}g\in Y$.

\emph{Step 4: a perturbation argument.} Define $J:G_1^0\to G_2$ by $Jg=P_{G_2}P_{Z_2}g$. Since $g\in G_1\subseteq G_2$ we have $g=P_{G_2}g$, hence $Jg-g=P_{G_2}(P_{Z_2}g-g)$ and, using $Z_1\subseteq Z_2$ and (6.1),
\[\|Jg-g\|\le\|(1-P_{Z_2})g\|\le\|(1-P_{Z_1})g\|\le2^{-1/2}\|g\| .\]
Let $\Psi_0,\Psi:G_1^0\oplus N'\to G_2$ be given by $\Psi_0(g,n)=g+n$ and $\Psi(g,n)=Jg+n$. Then $\Psi_0$ is unitary and $\|(\Psi-\Psi_0)(g,n)\|=\|Jg-g\|\le2^{-1/2}\|(g,n)\|$, so $\Psi=\Psi_0(1+\Psi_0^{-1}(\Psi-\Psi_0))$ is invertible by a Neumann series. In particular $\ran J+N'=G_2$, so
\[\dim(G_2/\ran J)\le\dim N'\le m+e'.\tag{6.3}\]

\emph{Step 5: conclusion.} Let $A=P_{Z_2}|_{G_1}$, so that $\ran A=P_{Z_2}G_1$, and $B=P_{G_2}|_Y$. By (6.2), $\|y-By\|=\operatorname{dist}(y,G_2)\le2^{-1/2}\|y\|$ for $y\in Y$, so $B$ is injective. By Step 3, $A(G_1^0)\subseteq Y\cap\ran A$, so $\ran J=B(A(G_1^0))\subseteq B(Y\cap\ran A)$. As $B$ is injective,
\begin{align*}\dim\bigl(Y/(Y\cap\ran A)\bigr)&=\dim\bigl(B(Y)/B(Y\cap\ran A)\bigr)\le\dim\bigl(G_2/B(Y\cap\ran A)\bigr)\\&\le\dim(G_2/\ran J)\le m+e'\end{align*}
by (6.3). Since $Y$ has codimension $e'$ in $Z_2$,
\[\dim(Z_2/\ran A)\le e'+\dim\bigl(Y/(Y\cap\ran A)\bigr)\le m+2e'.\]
Finally $V\cap\ran A=0$: if $v=P_{Z_2}g\in V$ with $g\in G_1$, then $V\perp Z_1$ gives $0=P_{Z_1}v=P_{Z_1}P_{Z_2}g=P_{Z_1}g$, so $\|g\|^2=\|(1-P_{Z_1})g\|^2\le\frac12\|g\|^2$ by (6.1), hence $g=0$ and $v=0$. Thus $V$ embeds linearly into $Z_2/\ran A$, and
\[W_{\tilde S}(\alpha,\beta)=\dim V\le m+2e'\le m+2(k+q).\]

\emph{The invertible case.} If $w$ is invertible, then $k=q=0$ and the above gives the stated inequality. For the kernels: $y\in\ker\tilde S$ iff $\langle\tilde Sy,y\rangle=0$ iff $\langle Swy,wy\rangle=0$ (by the two inequalities) iff $wy\in\ker S$; as $w$ is bijective, $w(\ker\tilde S)=\ker S$.
\end{proof}

The second lemma is purely combinatorial: it converts such window estimates into a bijection of bounded displacement.

\begin{lemma}[Matching lemma]\label{lemma-matching}
Let $d,\tilde d:\ZZ\to\{0,1,\dots,\infty\}$ and $m\in\NN$ with
\[\tilde d[a,b]\le d[a-m,b+m]\quad\text{and}\quad d[a,b]\le\tilde d[a-m,b+m]\qquad\text{for all integers }a\le b.\]
Then there is a bijection $\chi:\Lambda(\tilde d)\to\Lambda(d)$ of displacement at most $m$.
\end{lemma}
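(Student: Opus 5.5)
The plan is to build the bijection from two injections of bounded displacement and then combine them by a Cantor--Bernstein argument that preserves levels. First I will produce an injection $\phi:\Lambda(\tilde d)\to\Lambda(d)$ of displacement at most $m$, and symmetrically an injection $\psi:\Lambda(d)\to\Lambda(\tilde d)$ of displacement at most $m$. Consider the bipartite graph on $\Lambda(\tilde d)\sqcup\Lambda(d)$ in which $(j,i)$ is joined to $(j',i')$ when $|j-j'|\le m$. The Cantor--Bernstein construction then gives the bijection: decompose into the orbits of $\psi\circ\phi$, and on each chain use either $\phi$ or $\psi^{-1}$. Every edge used is an edge of the graph, so the resulting bijection $\chi$ still has displacement at most $m$. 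Both hypotheses are needed only to get the two injections.

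\emph{Step 1: infinite multiplicities.} Let $I=\{j:d_j=\infty\}$. A point of level $j$ of $\Lambda(\tilde d)$ with $[j-m,j+m]\cap I\ne\emptyset$ can be sent into an infinite fibre within distance $m$. I will first reserve, for each $j\in I$, a partition of the infinite fibre $\{(j,i):i\ge1\}$ into infinitely many infinite pieces, one for each level $j'$ with $|j-j'|\le m$. The points of $\Lambda(\tilde d)$ at level $j'$ are countably many and go injectively into the piece labelled $j'$ of some chosen $j\in I$ near $j'$. The remaining points of $\Lambda(\tilde d)$ sit at levels $j'$ with $d$ finite on all of $[j'-m,j'+m]$. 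Such points never need the infinite fibres, so I restrict to the subgraph avoiding $I$.

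\emph{Step 2: Hall's theorem.} Let $F$ be the set of remaining points of $\Lambda(\tilde d)$. Each one has a finite neighbourhood in $\Lambda(d)\setminus(I\times\NN)$, namely the points at levels in $[j'-m,j'+m]$, all of which have finite $d$. By Hall's theorem for families of finite sets (via compactness, the Rado/Hall infinite version), it suffices to check Hall's condition for finite subsets $A\subseteq F$. Split the set of levels occupied by $A$ into maximal groups whose $m$-neighbourhoods are separated; equivalently, cover it by intervals $[a,b]$ whose enlargements $[a-m,b+m]$ are pairwise disjoint. For each such interval,
\[\#(A\cap\text{levels }[a,b])\le\tilde d[a,b]\le d[a-m,b+m]=\#N([a,b]),\]
where $N([a,b])$ is the set of neighbours of the levels in $[a,b]$. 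Summing over the intervals gives $|A|\le|N(A)|$, because the neighbourhoods of distinct groups are disjoint and $N(A)$ contains the union of the neighbourhoods. A subtle point is that $d[a-m,b+m]$ counts only finite values here, which holds because all of these levels avoid $I$. If some interval meets levels near $I$, those points were already handled in Step 1, so I restrict $A$ to $F$ and recompute the intervals; the inequality remains valid. Hall's theorem then gives an injection on $F$, and combining it with Step 1 yields $\phi$, since the images are disjoint. Interchanging the roles of $d$ and $\tilde d$ gives $\psi$.

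The main obstacle is the bookkeeping in Step 2. When the levels of $A$ are split into separated intervals, the neighbourhood $N(A)$ is exactly $\bigcup[a-m,b+m]$, but an interval $[a,b]$ may contain levels not occupied by $A$. This is harmless: it only strengthens the bound, since the count of $A$ on $[a,b]$ is still at most $\tilde d[a,b]$. Having checked this, Hall's condition and the Cantor--Bernstein combination follow directly.
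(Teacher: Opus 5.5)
Your proposal is correct and is essentially the paper's proof. Levels near an infinite band of $d$ are first injected into infinite fibres. Hall's condition on the remaining points is then checked by splitting the union of windows into maximal intervals $[a-m,b+m]$ and applying the hypothesis to $[a,b]$. Finally the two injections are combined by Cantor--Bernstein.

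There are two harmless slips. Each infinite fibre needs only $2m+1$ infinite pieces, not infinitely many. Your worry about an interval $[a,b]$ meeting levels near $I$ is vacuous: $[a-m,b+m]$ lies in the union of the windows of points of $F$, and that union misses $I$.
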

\begin{proof}
\emph{Step 1: an injection $\varphi:\Lambda(\tilde d)\to\Lambda(d)$ of displacement at most $m$; the infinite levels.} Let $Z_\infty=\{l:d_l=\infty\}$ and $\Sigma=\{j:[j-m,j+m]\cap Z_\infty\ne\emptyset\}$. If $\tilde d_j=\infty$ then $d[j-m,j+m]\ge\tilde d[j,j]=\infty$, so $j\in\Sigma$. For a point of $\Lambda(\tilde d)$ with level $j\in\Sigma$ choose the smallest $l(j)\in Z_\infty$ with $|l(j)-j|\le m$. For each $l\in Z_\infty$ the set of points of $\Lambda(\tilde d)$ whose level $j\in\Sigma$ has $l(j)=l$ is countable, so it injects into the infinite set $\{(l,i):i\ge1\}\subseteq\Lambda(d)$ of all points of level $l$. Different $l$ give different target levels, so $\varphi$ is injective on these points.

\emph{Step 2: the finite levels, by Hall's theorem.} Let $X'$ be the (countable) set of points of $\Lambda(\tilde d)$ of level $j\notin\Sigma$. Then $\tilde d_j<\infty$ (by Step 1) and the window $[j-m,j+m]$ misses $Z_\infty$. For $x\in X'$ of level $j$ let $A_x$ be the set of points of $\Lambda(d)$ with level in $[j-m,j+m]$; it is finite because the window misses $Z_\infty$. We check Hall's condition for a finite set $F\subseteq X'$. Let $U$ be the union of the windows $[j-m,j+m]$ over the levels $j$ of points of $F$, and write $U$ as a disjoint union of maximal intervals $[p_s,q_s]$ of integers ($s=1,\dots,r$). Every level of a point of $F$ lies in some $[p_s+m,q_s-m]$, so $F=F_1\sqcup\dots\sqcup F_r$ with $F_s$ the points of $F$ of level in $[p_s+m,q_s-m]$ (a nonempty $F_s$ forces $p_s+m\le q_s-m$). Then, by the hypothesis with $a=p_s+m$, $b=q_s-m$,
\[|F_s|\le\tilde d[p_s+m,q_s-m]\le d[p_s,q_s].\]
The points of $\Lambda(d)$ with level in $[p_s,q_s]$ all lie in $\bigcup_{x\in F}A_x$ (each $l\in[p_s,q_s]\subseteq U$ lies in a window of some point of $F$), and the intervals are disjoint, so $|\bigcup_{x\in F}A_x|\ge\sum_sd[p_s,q_s]\ge\sum_s|F_s|=|F|$. By Hall's theorem for countable families of finite sets \cite{Hall1948}, there is an injective choice $x\mapsto a_x\in A_x$. These values have levels outside $Z_\infty$, hence differ from the values chosen in Step 1. Altogether we get an injection $\varphi:\Lambda(\tilde d)\to\Lambda(d)$ of displacement at most $m$.

\emph{Step 3: a bijection.} By symmetry of the hypotheses there is also an injection $\psi:\Lambda(d)\to\Lambda(\tilde d)$ of displacement at most $m$. The Cantor--Bernstein construction produces a bijection $\chi$ with $\chi(x)\in\{\varphi(x)\}\cup\{y:\psi(y)=x\}$ for every $x$; in both cases the levels of $x$ and $\chi(x)$ differ by at most $m$.
\end{proof}

The third lemma builds a unitary operator from such a bijection.

\begin{lemma}[Unitary lemma]\label{lemma-unitary}
Let $S\ge0$, $\tilde S\ge0$ be bounded operators on separable spaces $H$, $\tilde H$ with $\dim\ker S=\dim\ker\tilde S$, and let $\chi:\Lambda(\tilde d)\to\Lambda(d)$ be a bijection of displacement at most $m$, where $d=(d_j(S))$, $\tilde d=(d_j(\tilde S))$. Then there is a unitary $u:\tilde H\to H$ with $\gamma^{-1}\tilde S\le u^*Su\le\gamma\tilde S$, where $\gamma=(2m+1)2^{m+1}$.
\end{lemma}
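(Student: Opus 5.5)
The plan is to build $u$ from orthonormal bases adapted to the dyadic bands. For each $j\in\ZZ$ choose an orthonormal basis $\{x_{j,i}:1\le i\le d_j\}$ of $H_j(S)$, and likewise a basis $\{\tilde x_{j,i}\}$ of $H_j(\tilde S)$. Choose also orthonormal bases of $\ker S$ and $\ker\tilde S$. By Lemma \ref{lemma-bands}, the band bases together with the kernel basis form an orthonormal basis of $H$, indexed by $\Lambda(d)$ plus the kernel; the same holds for $\tilde H$. We define $u$ by $u\tilde x_{j,i}=x_{\chi(j,i)}$ and by any unitary $\ker\tilde S\to\ker S$, which exists since the kernel dimensions agree. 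Because $\chi$ is a bijection, $u$ maps an orthonormal basis onto an orthonormal basis, so it is unitary.

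For the upper estimate, take $y\in\tilde H$ and write $y=y_0+\sum_jy_j$ with $y_0\in\ker\tilde S$ and $y_j\in H_j(\tilde S)$. Then $uy_0\in\ker S$ contributes nothing. Since $\chi$ has displacement at most $m$, each $uy_j$ lies in $\bigoplus_{|l-j|\le m}H_l(S)$. Split the index set $\ZZ$ into the $2m+1$ residue classes $r\bmod(2m+1)$ and put $Y_r=\sum_{j\equiv r}y_j$. For two distinct $j,j'$ in the same class we have $|j-j'|\ge2m+1$, so the windows $[j-m,j+m]$ are disjoint. Hence the vectors $uy_j$, $j\equiv r$, lie in mutually orthogonal reducing subspaces of $S$, and
\[\langle SuY_r,uY_r\rangle=\sum_{j\equiv r}\langle Suy_j,uy_j\rangle\le\sum_{j\equiv r}2^{-j+m}\|y_j\|^2\le2^{m+1}\sum_{j\equiv r}\langle\tilde Sy_j,y_j\rangle,\]
using the band bounds of Lemma \ref{lemma-bands} on both sides. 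The form $\langle S\cdot,\cdot\rangle$ is a squared seminorm, so Cauchy--Schwarz over the $2m+1$ classes gives
\[\langle Suy,uy\rangle\le(2m+1)\sum_r\langle SuY_r,uY_r\rangle\le(2m+1)2^{m+1}\langle\tilde Sy,y\rangle.\]
Here we used that the $y_j$ are orthogonal and reduce $\tilde S$.

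The lower estimate follows by symmetry. The operator $u^{-1}=u^*$ is the unitary built in the same way from the bijection $\chi^{-1}$, which also has displacement at most $m$. The same argument gives $uSu^*\le\gamma\tilde S$ on $\tilde H$ with the roles exchanged, that is $\tilde S\le\gamma u^*Su$ after conjugating by $u$. Hence $\gamma^{-1}\tilde S\le u^*Su\le\gamma\tilde S$.

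The only delicate point is the cross terms between different bands. Because $u$ mixes neighbouring bands, the images $uy_j$ and $uy_{j'}$ need not be orthogonal with respect to $S$ when $|j-j'|\le2m$. The residue-class decomposition handles this, at the cost of the factor $2m+1$. Everything else is bookkeeping with Lemma \ref{lemma-bands}.
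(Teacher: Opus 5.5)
Your proposal is correct and follows the paper's proof essentially step for step: the same unitary built on band bases from $\chi$ plus a kernel unitary, the same band-wise bound $2^{m-j}\|y_j\|^2\le2^{m+1}\langle\tilde Sy_j,y_j\rangle$, the same splitting into residue classes modulo $2m+1$ with Cauchy--Schwarz, and the same symmetry via $\chi^{-1}$ for the lower bound. One small slip: the inequality obtained from $u^{-1}$ should read $u\tilde Su^*\le\gamma S$ on $H$ (your ``$uSu^*\le\gamma\tilde S$ on $\tilde H$'' does not type-check), and conjugating it by $u$ gives $\tilde S\le\gamma u^*Su$, which is what you conclude.
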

\begin{proof}
\emph{Step 1: the unitary.} Let $H_j=H_j(S)$, $\tilde H^j=H_j(\tilde S)$, and choose orthonormal bases $(e_{j,i})_{1\le i\le d_j}$ of $H_j$ and $(\tilde e_{j,i})_{1\le i\le\tilde d_j}$ of $\tilde H^j$. Define $u_0:(\ker\tilde S)^\perp\to(\ker S)^\perp$ by $u_0\tilde e_{j,i}=e_{\chi(j,i)}$; it is unitary since $\chi$ is a bijection of the index sets. It maps $\tilde H^j$ into $Z_j=\bigoplus_{|l-j|\le m}H_l$. Let $u_1:\ker\tilde S\to\ker S$ be unitary (equal dimensions) and $u=u_0\oplus u_1$.

\emph{Step 2: reduction to one inequality.} It suffices to prove $\langle Suy,uy\rangle\le\gamma\langle\tilde Sy,y\rangle$ for all $y$: applying this to $u^{-1}$, whose associated bijection $\chi^{-1}$ also has displacement at most $m$, gives $\langle\tilde Su^{-1}x,u^{-1}x\rangle\le\gamma\langle Sx,x\rangle$, that is $\gamma^{-1}\tilde S\le u^*Su$. As $S^{1/2}$ and $\tilde S^{1/2}$ vanish on the kernels and $u$ maps $\ker\tilde S$ into $\ker S$, we may take $y\in(\ker\tilde S)^\perp$, and by continuity (both sides are continuous in $y$) we may take $y$ in the dense set of finite sums $y=\sum_jy_j$, $y_j\in\tilde H^j$.

\emph{Step 3: the estimate band by band.} Put $z_j=uy_j\in Z_j$. Since $S\le2^{-(j-m)}$ on $Z_j$ (it consists of bands of index $l\ge j-m$) and $\|z_j\|=\|y_j\|$,
\[\langle Sz_j,z_j\rangle\le2^{m-j}\|y_j\|^2\le2^{m-j}2^{j+1}\langle\tilde Sy_j,y_j\rangle=2^{m+1}\langle\tilde Sy_j,y_j\rangle,\]
where $\langle\tilde Sy_j,y_j\rangle>2^{-j-1}\|y_j\|^2$ was used.

\emph{Step 4: summing over bands.} For a residue class $r$ modulo $2m+1$ the spaces $Z_j$, $j\equiv r$, are sums of bands over pairwise disjoint windows $[j-m,j+m]$ (consecutive such $j$ differ by at least $2m+1$), hence are mutually orthogonal and reduce $S$; so $\langle S\sum_{j\equiv r}z_j,\sum_{j\equiv r}z_j\rangle=\sum_{j\equiv r}\langle Sz_j,z_j\rangle$. With $v_r=\sum_{j\equiv r}z_j$ we have $uy=\sum_{r=0}^{2m}v_r$, and by the Cauchy--Schwarz inequality $\|S^{1/2}\sum_rv_r\|^2\le(2m+1)\sum_r\|S^{1/2}v_r\|^2$. Altogether
\[\langle Suy,uy\rangle\le(2m+1)\sum_j\langle Sz_j,z_j\rangle\le(2m+1)2^{m+1}\sum_j\langle\tilde Sy_j,y_j\rangle=\gamma\langle\tilde Sy,y\rangle,\]
the last equality because the $\tilde H^j$ are mutually orthogonal and reduce $\tilde S$.
\end{proof}

\subsection{The classification}

We now prove the classification for invertible and unitary comparison operators, together with some additional equivalent conditions.

\begin{theorem}\label{theorem-inv}
Let $f,\tilde f$ be Bessel families, $S=S_f$, $\tilde S=S_{\tilde f}$, $d_j=d_j(S)$, $\tilde d_j=d_j(\tilde S)$. The following are equivalent:
\begin{enumerate}[(i)]
\item $f\simeq_\mathrm{inv}\tilde f$;
\item $f\simeq_\mathrm{uni}\tilde f$;
\item there are a unitary $u:\tilde H\to H$ and $0<c\le C$ with $c\,\tilde S\le u^*Su\le C\,\tilde S$;
\item $k(f)=k(\tilde f)$, and there is $K\ge1$ with $W_{\tilde S}(\alpha,\beta)\le W_S(\alpha/K,K\beta)$ and $W_S(\alpha,\beta)\le W_{\tilde S}(\alpha/K,K\beta)$ for all $0<\alpha<\beta$;
\item $k(f)=k(\tilde f)$, and there is $m\in\NN$ with $\tilde d[a,b]\le d[a-m,b+m]$ and $d[a,b]\le\tilde d[a-m,b+m]$ for all integers $a\le b$;
\item $k(f)=k(\tilde f)$, and $(d_j)$, $(\tilde d_j)$ are related by a bijection of bounded displacement (Definition \ref{definition-lambda}).
\end{enumerate}
\end{theorem}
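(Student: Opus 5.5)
We follow the cycle announced in the Strategy paragraph: (i)$\Rightarrow$(iv)$\Rightarrow$(v)$\Rightarrow$(vi)$\Rightarrow$(iii)$\Rightarrow$(ii)$\Rightarrow$(i). Each arrow is essentially one of the three lemmas or a trivial inclusion.

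For (i)$\Rightarrow$(iv), take invertible $w:\tilde H\to H$ and $\tilde w:H\to\tilde H$ realizing $f\simeq_\mathrm{inv}\tilde f$. By Proposition \ref{proposition-reduction}(a) they satisfy $c\tilde S\le w^*Sw\le C\tilde S$ and the symmetric inequality. If $w=0$, then both spaces are zero and everything is trivial, so assume $w\ne0$. Lemma \ref{lemma-window} applied to $w$ gives $k(f)=k(\tilde f)$ and $W_{\tilde S}(\alpha,\beta)\le W_S(\alpha/K_1,K_1\beta)$. Applied to $\tilde w$ it gives the symmetric bound with some $K_2$. Take $K=\max(K_1,K_2)$; enlarging the window only increases $W$.

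For (iv)$\Rightarrow$(v), choose $m\in\NN$ with $2^m\ge K$. Then $\tilde d[a,b]=W_{\tilde S}(2^{-b-1},2^{-a})\le W_S(2^{-b-1}/K,2^{-a}K)\le W_S(2^{-b-m-1},2^{-a+m})=d[a-m,b+m]$, and symmetrically. The implication (v)$\Rightarrow$(vi) is exactly Lemma \ref{lemma-matching}, and (vi)$\Rightarrow$(iii) is Lemma \ref{lemma-unitary}, which applies because $k(f)=k(\tilde f)$. For (iii)$\Rightarrow$(ii), take $w=u$ and $\tilde w=u^{-1}=u^*$. Then $u^{-*}=u$ gives $c\,S\le u\,\tilde S\,u^*\cdot C^{-1}$-type bounds: conjugating $c\tilde S\le u^*Su\le C\tilde S$ by $u$ yields $C^{-1}S\le u\tilde Su^*\le c^{-1}S$, which is the required inequality for $\tilde w$. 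Finally, (ii)$\Rightarrow$(i) is Proposition \ref{proposition-inclusions}.

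Since the three lemmas are already proved, the only point needing care is bookkeeping. We must check that the window lemma's constant $K$ is uniform in $(\alpha,\beta)$, and that the conversion between continuous windows and dyadic sums $d[a,b]$ respects the half-open convention $(\alpha,\beta]$. The main obstacle, namely the window lemma itself, has been handled earlier. In this theorem the delicate step is therefore (i)$\Rightarrow$(iv), where one must verify that the hypotheses of Lemma \ref{lemma-window} are met, with $b=\|w^{-1}\|^2$ and $k=q=0$.
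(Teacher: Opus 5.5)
Your proposal is correct and follows the paper's proof essentially verbatim: the same cycle (i)$\Rightarrow$(iv)$\Rightarrow$(v)$\Rightarrow$(vi)$\Rightarrow$(iii)$\Rightarrow$(ii)$\Rightarrow$(i), with the window, matching and unitary lemmas supplying the three nontrivial arrows, $2^m\ge K$ for the dyadic conversion, and conjugation by $u$ for (iii)$\Rightarrow$(ii). Your separate treatment of $w=0$ (forcing $H=\tilde H=\{0\}$), needed because Lemma \ref{lemma-window} assumes $w\ne0$, is a small point the paper leaves implicit. The garbled phrase before your correct inequality $C^{-1}S\le u\tilde Su^*\le c^{-1}S$ should simply be deleted.
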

\begin{proof}
\emph{(iii)$\Rightarrow$(ii).} Let $u$ be as in (iii) and put $w=u$, $\tilde w=u^{-1}=u^*$. Then $c\tilde S\le w^*Sw\le C\tilde S$. Conjugating the two inequalities of (iii) by $u$ gives $c\,u\tilde Su^*\le S\le C\,u\tilde Su^*$, that is $C^{-1}S\le\tilde w^*\tilde S\tilde w\le c^{-1}S$ for $\tilde w=u^*$ (as $\tilde w^*\tilde S\tilde w=u\tilde Su^*$). By Proposition \ref{proposition-reduction}(a), $f\simeq_\mathrm{uni}\tilde f$.

\emph{(ii)$\Rightarrow$(i).} Proposition \ref{proposition-inclusions}.

\emph{(i)$\Rightarrow$(iv).} Let $w:\tilde H\to H$ and $\tilde w:H\to\tilde H$ be invertible operators realizing $\simeq_\mathrm{inv}$, with $c\tilde S\le w^*Sw\le C\tilde S$ and $c\,S\le\tilde w^*\tilde S\tilde w\le C\,S$ (Proposition \ref{proposition-reduction}(a)). Lemma \ref{lemma-window} applied to $w$ and to $\tilde w$ gives $\dim\ker\tilde S=\dim\ker S$ and the two window inequalities, with $K$ the larger of the two constants.

\emph{(iv)$\Rightarrow$(v).} Choose $m\in\NN$ with $2^m\ge K$. For integers $a\le b$ we have
\begin{align*}\tilde d[a,b]&=W_{\tilde S}(2^{-b-1},2^{-a})\le W_S(2^{-b-1}/K,K2^{-a})\\&\le W_S(2^{-b-m-1},2^{-a+m})=d[a-m,b+m],\end{align*}
because $(2^{-b-1}/K,K2^{-a}]\subseteq(2^{-(b+m)-1},2^{-(a-m)}]$; and symmetrically.

\emph{(v)$\Rightarrow$(vi).} Lemma \ref{lemma-matching}.

\emph{(vi)$\Rightarrow$(iii).} Lemma \ref{lemma-unitary} gives a unitary $u$ with $\gamma^{-1}\tilde S\le u^*Su\le\gamma\tilde S$.
\end{proof}

We deduce the description of the classes of $\simeq_\mathrm{inv}=\simeq_\mathrm{uni}$.

\begin{corollary}\label{corollary-inv}
Let $f,\tilde f$ be Bessel families.
\begin{enumerate}[(a)]
\item ${\simeq_\mathrm{uni}}={\simeq_\mathrm{inv}}\subseteq{\simeq_\mathrm{bb}}$, and on compact families $\simeq_\mathrm{inv}$ coincides with $\simeq_\mathrm{inj}$.
\item Let $f,\tilde f$ be noncompact with closed range of $T_f$, $T_{\tilde f}$. Then $f\simeq_\mathrm{inv}\tilde f$ iff $k(f)=k(\tilde f)$. For finite $k$ these classes are the classes $(k,\mathrm{sF})$ of Corollary \ref{corollary-closed}.
\item For each $k\in\{0,1,\dots,\infty\}$ the noncompact families with $\dim\ker T_f=k$ and nonclosed range split into exactly $2^{\aleph_0}$ classes of $\simeq_\mathrm{inv}$, although all of them lie in a single class of $\simeq_\mathrm{bb}$.
\item The relation $\simeq_\mathrm{inv}$ is strictly finer than $\simeq_\mathrm{bb}$.
\end{enumerate}
\end{corollary}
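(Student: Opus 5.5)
Each part reduces to Theorem \ref{theorem-inv}, read through its conditions (iii), (v) and (vi), together with the earlier classifications.

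For (a), the equality ${\simeq_\mathrm{uni}}={\simeq_\mathrm{inv}}$ is (i)$\Leftrightarrow$(ii) of Theorem \ref{theorem-inv}, and the inclusion in $\simeq_\mathrm{bb}$ is Proposition \ref{proposition-inclusions}. For compact families we already have $\simeq_\mathrm{inv}\subseteq\simeq_\mathrm{inj}$, so only the converse needs proof. Here I would reuse Case 1 of Theorem \ref{theorem-closed}. If $f\simeq_\mathrm{inj}\tilde f$ are compact, they have the same rank and the same $k$. The unitary $w$ built there maps eigenvectors to eigenvectors in decreasing order and $\ker\tilde S$ onto $\ker S$. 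It satisfies $c\tilde S\le w^*Sw\le C\tilde S$ because $\lambda_n\asymp\tilde\lambda_n$ (Theorem \ref{theorem-singular}). This is condition (iii) of Theorem \ref{theorem-inv}.

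For (b), the forward implication comes from $\simeq_\mathrm{inv}\subseteq\simeq_\mathrm{inj}$ (Theorem \ref{theorem-inj}). For the converse, take noncompact $S,\tilde S$ with closed range. As in the proof of Lemma \ref{lemma-usF}, closedness of the range gives $\delta P\le S\le\|S\|P$ on $(\ker S)^\perp$, without needing $k<\infty$. Both complements $(\ker S)^\perp$ and $(\ker\tilde S)^\perp$ are infinite-dimensional, and the kernels have equal dimension. So the unitary $u$ of Case 2 of Theorem \ref{theorem-closed} exists, and it gives (iii) of Theorem \ref{theorem-inv}. For finite $k$ these families are exactly the noncompact upper semi-Fredholm ones, which are the classes $(k,\mathrm{sF})$.

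For (c), take a noncompact family with nonclosed range and kernel dimension $k$. For $k<\infty$ it is not upper semi-Fredholm. For $k=\infty$ I would still check that $M_S(\mu)=\infty$ for every $\mu$, which is immediate since the kernel is infinite. So all these families lie in the single $\simeq_\mathrm{bb}$-class $(k,\mathrm{nsF})$ by Theorem \ref{theorem-closed}. There are at most $2^{\aleph_0}$ classes because there are at most $2^{\aleph_0}$ sequences $(d_j)$, and by (vi) the class depends only on $k$ and $(d_j)$.

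The main work is the lower bound in (c). I would fix a sparse set of levels, say $n_p=p^2$ for $p\in\NN$. For each infinite $A\subseteq\NN$ I would let $S_A=\mathrm{id}_{\ell^2}\oplus\bigoplus_{p\in A}2^{-n_p}\mathrm{id}_{\ell^2}\oplus0_{\FF^k}$, with $0_{\ell^2}$ in place of $0_{\FF^k}$ when $k=\infty$. It is noncompact, has kernel dimension $k$, and has nonclosed range since its spectrum accumulates at $0$. Its multiplicity sequence is $d_j=\infty$ for $j\in\{0\}\cup\{n_p:p\in A\}$ and $d_j=0$ otherwise. Now suppose $A\triangle B$ is infinite and $m$ is a fixed displacement. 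For some $p\in A\setminus B$ (or $B\setminus A$) with $p$ large, the window $[n_p-m,n_p+m]$ meets no level of $S_B$, because the gaps between consecutive $n_p$ tend to infinity. Then $d^A[n_p,n_p]=\infty>0=d^B[n_p-m,n_p+m]$, so condition (v) fails for every $m$, and Theorem \ref{theorem-inv} gives $S_A\not\simeq_\mathrm{inv}S_B$. It remains to pick a family of $2^{\aleph_0}$ infinite subsets of $\NN$ with pairwise infinite symmetric differences. Branches of the binary tree coded into $\NN$, or an almost disjoint family, will do.

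For (d), part (c) exhibits two $\simeq_\mathrm{bb}$-equivalent families that are not $\simeq_\mathrm{inv}$-equivalent, which together with (a) gives strictness.
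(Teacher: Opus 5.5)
Your proof is correct. Part (c) is essentially the paper's argument: bands at the squares, sets coming from branches of the binary tree, and condition (v) of Theorem~\ref{theorem-inv} with $a=b$. The extra summand $\mathrm{id}_{\ell^2}$ at level $0$ is harmless.

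In (a) and (b) you take a different route from the paper.
\begin{itemize}
\item \emph{Your route.} You produce the unitary of condition (iii) of Theorem~\ref{theorem-inv} directly, by reusing Cases 1 and 2 of the proof of Theorem~\ref{theorem-closed}. You also note, correctly, that the bound $\delta P\le S\le\|S\|P$ from Lemma~\ref{lemma-usF} uses only closedness of the range, so Case 2 carries over to $k=\infty$.
\item \emph{The paper's route.} It goes through condition (vi) instead. In (a) it matches the $n$-th eigenvalues by $n\mapsto n$, a bijection of displacement at most $\lceil\log_2L\rceil+1$. In (b) it observes that closed range confines the nonzero $d_j$ to a bounded interval $[j_0,j_1]$, so any bijection between the two countably infinite level sets has displacement at most $j_1-j_0$.
\end{itemize}

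What each route buys:
\begin{itemize}
\item Yours is more elementary: it never passes through the matching and unitary lemmas (Lemmas~\ref{lemma-matching} and~\ref{lemma-unitary}, with Hall's theorem and Cantor--Bernstein).
\item Yours gives $\simeq_\mathrm{uni}$ with explicit constants: $L$ in (a), and $\delta/\|\tilde S\|$, $\|S\|/\tilde\delta$ in (b). The unitary lemma only gives $\gamma=(2m+1)2^{m+1}$.
\item The paper's route stays inside the combinatorial invariant $(d_j)$. It makes visible why that invariant carries no extra information on compact or closed-range families.
\end{itemize}

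For (d) you reuse the test families of (c), whereas the paper uses the running pair $S_1,S_2$ of Example~\ref{example-running}. Both are valid witnesses of strictness.
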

\begin{proof}
(a) ${\simeq_\mathrm{uni}}={\simeq_\mathrm{inv}}$ is Theorem \ref{theorem-inv} and the inclusion is Proposition \ref{proposition-inclusions}. Let $f\simeq_\mathrm{inj}\tilde f$ be compact. Then $k(f)=k(\tilde f)$ and the ranks $r$ agree, and by Theorem \ref{theorem-singular} the eigenvalues $\lambda_n=\lambda_n(S)$, $\tilde\lambda_n=\lambda_n(\tilde S)$, $n\le r$, satisfy $L^{-1}\tilde\lambda_n\le\lambda_n\le L\tilde\lambda_n$ for some $L\ge1$. By Lemma \ref{lemma-compact} the points of $\Lambda(d)$ are in bijection with the indices $n\le r$ (the $n$-th eigenvalue counted with multiplicity lies in the band $j_n=\lfloor-\log_2\lambda_n\rfloor$, that is $2^{-j_n-1}<\lambda_n\le2^{-j_n}$), and likewise for $\tilde d$ with $\tilde j_n$. The map $n\mapsto n$ is a bijection $\Lambda(\tilde d)\to\Lambda(d)$, and since $|\log_2\lambda_n-\log_2\tilde\lambda_n|\le\log_2L$ and $|\lfloor s\rfloor-\lfloor t\rfloor|<|s-t|+1$, its displacement is at most $m=\lceil\log_2L\rceil+1$. By Theorem \ref{theorem-inv}(vi), $f\simeq_\mathrm{inv}\tilde f$.

(b) If $T_f$ has closed range, then $E_S(0,\delta)=0$ for some $\delta>0$ (Lemma \ref{lemma-usF}: $S\ge\delta$ on $(\ker S)^\perp$ implies $E(0,\delta/2]H=0$), that is, $d_j=0$ for all $j$ above some $j_1$; also $d_j=0$ below some $j_0$ as $S$ is bounded. The same holds for $\tilde f$, with a common range $[j_0,j_1]$. Both level sets are then countably infinite (noncompact operators have infinite rank), so there is a bijection $\Lambda(\tilde d)\to\Lambda(d)$, and any such bijection has displacement at most $j_1-j_0$. Hence $f\simeq_\mathrm{inv}\tilde f$ iff $k(f)=k(\tilde f)$ by Theorem \ref{theorem-inv}(vi), the necessity of $k(f)=k(\tilde f)$ being part of it. Every $k$ occurs, for instance for $S=\mathrm{id}_{\ell^2}\oplus0_{\FF^k}$. For finite $k$, such families are exactly the noncompact upper semi-Fredholm families with $k(f)=k$ (Lemma \ref{lemma-usF}).

(c) \emph{Step 1: at most $2^{\aleph_0}$ classes.} A class is determined by the sequence $(d_j)$ up to the equivalence, and there are $2^{\aleph_0}$ sequences.

\emph{Step 2: the test family.} Identify the finite $0$-$1$ strings with $\NN$. For each infinite $0$-$1$ sequence $\sigma$ let $A_\sigma=\{\sigma|_n:n\ge1\}\subseteq\NN$ (the set of its finite initial segments); each $A_\sigma$ is infinite and $A_\sigma\cap A_\tau$ is finite for $\sigma\ne\tau$ (it consists of the common initial segments). Put $J_\sigma=\{t^2:t\in A_\sigma\}$ and let
\[S_\sigma=\bigoplus_{j\in J_\sigma}2^{-j}\,\mathrm{id}_{\ell^2}\ \oplus\ 0_{\FF^k}\]
($0_{\ell^2}$ if $k=\infty$), a bounded positive operator on a separable space (Lemma \ref{lemma-basic}(c)). It is noncompact (eigenvalue $2^{-j}$ of infinite multiplicity), its kernel has dimension $k$, and its range is not closed ($J_\sigma$ is infinite, so $0$ is an accumulation point of the spectrum). Moreover $d_j(S_\sigma)=\infty$ for $j\in J_\sigma$ and $0$ otherwise.

\emph{Step 3: the test families are pairwise inequivalent.} Suppose $S_\sigma\simeq_\mathrm{inv}S_\tau$ with $\sigma\ne\tau$ and let $m$ be as in Theorem \ref{theorem-inv}(v); applying it with $a=b$ gives $d_{[a,a]}(S_\tau)\le d_{[a-m,a+m]}(S_\sigma)$ for all $a$. Choose $t\in A_\tau\setminus A_\sigma$ with $2t-1>m$ (possible since $A_\tau\setminus A_\sigma$ is infinite). Then $d_{t^2}(S_\tau)=\infty$, so $d_{[t^2-m,t^2+m]}(S_\sigma)=\infty$, hence there is $s^2\in J_\sigma$ with $|s^2-t^2|\le m$. But distinct squares $s^2\ne t^2$ differ by at least $2t-1>m$, so $s=t$ and $t\in A_\sigma$, a contradiction. So the $S_\sigma$ lie in $2^{\aleph_0}$ different classes.

\emph{Step 4: a single $\simeq_\mathrm{bb}$-class.} All these families are noncompact with kernel dimension $k$, hence in one class of $\simeq_\mathrm{inj}$, and not upper semi-Fredholm, hence in one class $(k,\mathrm{nsF})$ of $\simeq_\mathrm{bb}$ (Corollary \ref{corollary-closed}).

(d) Let $S_1=\mathrm{id}_{\ell^2}\oplus\operatorname{diag}(1/n)$ and $S_2=\bigoplus_{j\ge0}2^{-j}\mathrm{id}_{\ell^2}$. Both are noncompact with $k=0$ and $M_{S_1}(\mu)=M_{S_2}(\mu)=\infty$ for all $\mu>0$, hence $S_1\simeq_\mathrm{bb}S_2$ by Theorem \ref{theorem-closed}. But $d_j(S_1)<\infty$ for $j\ge1$ (the eigenvalues $1/n$ lie in band $j$ iff $2^j\le n<2^{j+1}$, so $d_j(S_1)=2^j$), whereas $d_j(S_2)=\infty$ for every $j\ge0$. For any $m$ and $a>m+1$ one has $d_{[a,a]}(S_2)=\infty>d_{[a-m,a+m]}(S_1)$, so Theorem \ref{theorem-inv}(v) fails and $S_1\not\simeq_\mathrm{inv}S_2$.
\end{proof}

\begin{example}[Running example, continued]\label{example-running-inv}
For the families of Example \ref{example-running}, Corollary \ref{corollary-inv}(d) shows that $f\not\simeq_\mathrm{inv}\tilde f$, hence $f\not\simeq_\mathrm{uni}\tilde f$: the multiplicity of the spectrum in the band $j$ is finite for $f$ and infinite for $\tilde f$ once $j\ge1$, and no bounded-displacement bijection can repair this. The pair $(f,\tilde f)$ is the simplest example separating $\simeq_\mathrm{bb}$ from $\simeq_\mathrm{inv}$: $\simeq_\mathrm{bb}$ allows the reservoir leak, while $\simeq_\mathrm{inv}$ does not.
\end{example}

\section{An operator Banach--Mazur distance}
\label{section-distance}

The classical Banach--Mazur distance between two isomorphic Banach spaces $X,Y$ is $\log\inf\{\|T\|\,\|T^{-1}\|\}$ over all linear isomorphisms $T:X\to Y$ (bounded with bounded inverse); it measures how far an isomorphism is from being an isometry. The distance $D$ below is the analogue for the seminorms $\rho(f)$: an isomorphism $T$ is replaced by the pair $(w,\tilde w)$ of comparison operators, and the condition that $T$ be invertible is replaced by the exact equalities $w^*S_fw=S_{\tilde f}$, $\tilde w^*S_{\tilde f}\tilde w=S_f$. Since two families that are not equivalent should be at infinite distance, we allow the value $\infty$.

\begin{definition}\label{definition-pseudometric}
An \textbf{extended pseudometric} on a set $X$ is a function $d:X\times X\to[0,\infty]$ with $d(x,x)=0$, $d(x,y)=d(y,x)$ and $d(x,z)\le d(x,y)+d(y,z)$; the value $\infty$ is allowed and distinct points may have distance $0$.
\end{definition}

The distance $D$ is defined through the pairs of comparison operators realizing the relation exactly.

\begin{definition}\label{definition-exact}
Call a pair $(w,\tilde w)$ of bounded operators \textbf{exact} for $(f,\tilde f)$ if $w^*S_fw=S_{\tilde f}$ and $\tilde w^*S_{\tilde f}\tilde w=S_f$, that is, $\rho(f)\circ w=\rho(\tilde f)$ and $\rho(\tilde f)\circ\tilde w=\rho(f)$; a single operator $w$ with $w^*S_fw=S_{\tilde f}$ is called \textbf{exact} for $(f,\tilde f)$. For Bessel families $f,\tilde f$ put
\[D(f,\tilde f):=\log\inf\{\max(1,\|w\|\,\|\tilde w\|):(w,\tilde w)\text{ exact for }(f,\tilde f)\}\in[0,\infty],\]
with $\inf\emptyset=\infty$. By Theorem \ref{theorem-all}(ii), exact pairs exist iff $f\simeq\tilde f$.
\end{definition}

\begin{proposition}\label{proposition-distance-metric}
$D$ is an extended pseudometric on Bessel families, $D<\infty\iff f\simeq\tilde f$, $D(f,cf)=0$ for every scalar $c\ne0$, and hence $D(cf,g)=D(f,g)$. If $S_f\ne0$ the maximum with $1$ is inactive.
\end{proposition}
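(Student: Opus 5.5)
We verify the pseudometric axioms directly from Definition \ref{definition-exact}, using Theorem \ref{theorem-all}(ii) for finiteness.

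\emph{Finiteness and $D(f,f)=0$.} $D<\infty$ holds exactly when an exact pair exists, and by Theorem \ref{theorem-all}(ii) this happens exactly when $f\simeq\tilde f$. For $\tilde f=f$ the pair $(\mathrm{id},\mathrm{id})$ is exact, so $D(f,f)\le\log\max(1,1)=0$.

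\emph{Symmetry.} The definition of an exact pair is symmetric under $(f,w)\leftrightarrow(\tilde f,\tilde w)$. The quantity $\max(1,\|w\|\|\tilde w\|)$ is also symmetric, so the two infima defining $D(f,\tilde f)$ and $D(\tilde f,f)$ coincide.

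\emph{Triangle inequality.} We may assume $D(f,g),D(g,h)<\infty$. Let $(w,\tilde w)$ be exact for $(f,g)$ and $(v,\tilde v)$ exact for $(g,h)$. Then
\[(wv)^*S_f(wv)=v^*S_gv=S_h\qquad\text{and}\qquad(\tilde v\tilde w)^*S_h(\tilde v\tilde w)=\tilde w^*S_g\tilde w=S_f,\]
so $(wv,\tilde v\tilde w)$ is exact for $(f,h)$. Moreover
\[\max(1,\|wv\|\|\tilde v\tilde w\|)\le\max(1,\|w\|\|\tilde w\|)\cdot\max(1,\|v\|\|\tilde v\|),\]
using submultiplicativity and $\max(1,xy)\le\max(1,x)\max(1,y)$ for $x,y\ge0$. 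Taking logarithms and infima over both pairs gives $D(f,h)\le D(f,g)+D(g,h)$.

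\emph{Scaling.} For $c\ne0$ we have $S_{cf}=|c|^2S_f$, since $T_{cf}x=\bar c\,T_fx$. The pair $(w,\tilde w)=(|c|\,\mathrm{id},|c|^{-1}\mathrm{id})$ is exact for $(f,cf)$, because $|c|^2S_f=S_{cf}$ and $|c|^{-2}S_{cf}=S_f$. Its product of norms is $1$, so $D(f,cf)=0$. The triangle inequality and symmetry then give
\[D(cf,g)\le D(cf,f)+D(f,g)=D(f,g),\]
and the reverse inequality follows by applying the same argument with $c^{-1}$ to $cf$.

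\emph{Inactivity of the maximum.} This is the only point needing an argument. Suppose $S_f\ne0$ and $(w,\tilde w)$ is exact. Then
\[S_f=\tilde w^*S_{\tilde f}\tilde w=\tilde w^*w^*S_fw\tilde w=(w\tilde w)^*S_f(w\tilde w).\]
Taking norms, $\|S_f\|\le\|w\tilde w\|^2\|S_f\|\le\|w\|^2\|\tilde w\|^2\|S_f\|$. Dividing by $\|S_f\|>0$ gives $\|w\|\|\tilde w\|\ge1$, so $\max(1,\|w\|\|\tilde w\|)=\|w\|\|\tilde w\|$ for every exact pair.

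When $S_f=0$, the same identity shows that $S_{\tilde f}=w^*S_fw=0$. In that case the pair $(0,0)$ is exact, which is exactly why the maximum with $1$ is needed.
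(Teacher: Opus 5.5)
Your proposal is correct and matches the paper's proof essentially step for step. The paper uses the same three ingredients: composing exact pairs for the triangle inequality, the pair $(|c|\,\mathrm{id},|c|^{-1}\mathrm{id})$ for scaling, and the identity $S_f=(w\tilde w)^*S_f(w\tilde w)$ to get $\|w\|\|\tilde w\|\ge1$ when $S_f\ne0$. Your explicit appeal to Theorem \ref{theorem-all}(ii) for $D<\infty\iff f\simeq\tilde f$ is the remark the paper places at the end of Definition \ref{definition-exact}.
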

\begin{proof}
Symmetry is clear and $D(f,f)=0$ (take $w=\tilde w=1$).

\emph{Triangle inequality.} If $D(f,g)$ or $D(g,h)$ is infinite there is nothing to prove. Otherwise, for any exact pairs $(w_1,\tilde w_1)$ for $(f,g)$ and $(w_2,\tilde w_2)$ for $(g,h)$, put $w=w_1w_2$, $\tilde w=\tilde w_2\tilde w_1$. Then $w^*S_fw=w_2^*S_gw_2=S_h$ and $\tilde w^*S_h\tilde w=\tilde w_1^*S_g\tilde w_1=S_f$, so $(w,\tilde w)$ is exact for $(f,h)$, and $\|w\|\|\tilde w\|\le\|w_1\|\|w_2\|\|\tilde w_2\|\|\tilde w_1\|$, so $\max(1,\|w\|\|\tilde w\|)\le\max(1,\|w_1\|\|\tilde w_1\|)\max(1,\|w_2\|\|\tilde w_2\|)$. Taking the infimum over the two pairs and then logarithms turns the product into a sum.

\emph{Invariance.} Since $S_{cf}=|c|^2S_f$, the pair $(|c|\,1,|c|^{-1}1)$ is exact for $(f,cf)$ with product of norms $1$, so $D(f,cf)=0$; the second assertion follows from the triangle inequality.

\emph{The maximum with $1$.} If $(w,\tilde w)$ is exact for $(f,\tilde f)$ then $S_f=\tilde w^*(w^*S_fw)\tilde w=(w\tilde w)^*S_f(w\tilde w)$, so $\|S_f\|\le\|w\tilde w\|^2\|S_f\|$; if $S_f\ne0$ this gives $1\le\|w\tilde w\|\le\|w\|\|\tilde w\|$.
\end{proof}

For positive operators $S,\tilde S$ put
\[\kappa(\tilde S|S):=\inf\{c>0:N_{\tilde S}(\lambda)\le N_S(\lambda/c)\text{ for all }\lambda>0\}\in[0,\infty].\]
The set of admissible $c$ is closed upwards (as $N_S$ is nonincreasing), so every $c>\kappa(\tilde S|S)$ is admissible; and $\kappa(\tilde S|S)=0$ iff $\tilde S=0$ (if every small $c$ is admissible then $N_{\tilde S}(\lambda)=0$ for $\lambda>c\|S\|$ and all $c>0$).

\begin{theorem}\label{theorem-distance}
Let $f,\tilde f$ be Bessel families, $S=S_f$, $\tilde S=S_{\tilde f}$.
\begin{enumerate}[(a)]
\item Every exact $w$ has $\|w\|^2\ge\kappa(\tilde S|S)$, and for every $c>\kappa(\tilde S|S)$ there is an exact $w$ with $\|w\|^2<c$. Thus $\inf\|w\|^2=\kappa(\tilde S|S)$ over exact $w$.
\item If $S\ne0\ne\tilde S$, then $D(f,\tilde f)=\frac12\log\bigl(\kappa(\tilde S|S)\,\kappa(S|\tilde S)\bigr)$, and it is finite iff $f\simeq\tilde f$. If $S=\tilde S=0$ then $D=0$, and if exactly one of $S,\tilde S$ is $0$ then $D=\infty$.
\item $\kappa(\tilde S|S)=\sup_n\lambda_n(\tilde S)/\lambda_n(S)$ (with $0/0=0$ and $a/0=\infty$ for $a>0$), so that, if $S\ne0\ne\tilde S$,
\[D(f,\tilde f)=\log\Bigl(\sup_n\frac{s_n}{\tilde s_n}\cdot\sup_n\frac{\tilde s_n}{s_n}\Bigr),\qquad s_n=s_n(f),\ \tilde s_n=s_n(\tilde f).\]
\item If $T_f,T_{\tilde f}$ are compact and $f\simeq\tilde f$, the infimum in $D$ is attained. For example $s_n=n^{-a}$, $\tilde s_n=(n+1)^{-a}$ gives $D=a\log2$, and $n^{-a}$ versus $n^{-b}$ gives $D=\infty$ for $a\ne b$.
\end{enumerate}
\end{theorem}
\begin{proof}
(a) \emph{Lower bound.} Let $w$ be exact. If $w=0$ then $\tilde S=0$ and $\kappa(\tilde S|S)=0$. Otherwise Proposition \ref{proposition-counting} with $K_1=1$ gives $N_{\tilde S}(\lambda)\le N_S(\lambda/\|w\|^2)$, i.e.\ $\|w\|^2$ is admissible, so $\|w\|^2\ge\kappa(\tilde S|S)$.

\emph{Upper bound.} Let $c>\kappa(\tilde S|S)$ and choose $c'\in(\kappa(\tilde S|S),c)$ and $q>1$ with $q^2c'<c$. Let $m\in\ZZ$ be the smallest integer with $q^m\ge c'$, so $q^{m-1}<c'$. Since $c'$ is admissible and $N_S$ is nonincreasing, $N_{\tilde S}(\lambda)\le N_S(\lambda/c')\le N_S(q^{-m}\lambda)$. Lemma \ref{lemma-embedding} gives a partial isometry $J$ with initial space $(\ker\tilde S)^\perp$ and $J\tilde SJ^*\le q^{m+1}S$, and Douglas' lemma applied as in the proof of Theorem \ref{theorem-all} gives $w$ with $S^{1/2}w=J\tilde S^{1/2}$, $w^*Sw=\tilde S$ and $\|w\|^2\le q^{m+1}<q^2c'<c$.

(b) Exact pairs consist of two independently chosen exact operators $w$ and $\tilde w$ (for $(f,\tilde f)$ and $(\tilde f,f)$), so by (a) $\inf\|w\|\|\tilde w\|=\sqrt{\kappa(\tilde S|S)\kappa(S|\tilde S)}$ whenever exact pairs exist, that is, whenever $f\simeq\tilde f$ (Theorem \ref{theorem-all}). If $S\ne0\ne\tilde S$ the maximum with $1$ is inactive (Proposition \ref{proposition-distance-metric}), which gives the formula; if $f\not\simeq\tilde f$ then $D=\infty$, and one of the two $\kappa$ is infinite while the other is positive, so the right-hand side is $\infty$ as well. If $S=\tilde S=0$ then $w=\tilde w=0$ is exact and $\max(1,0)=1$. If exactly one is $0$, no exact pair exists, since $w^*0\,w=0\ne\tilde S$.

(c) Let $c\ge\sup_n\tilde\lambda_n/\lambda_n$ with $\tilde\lambda_n=\lambda_n(\tilde S)$, $\lambda_n=\lambda_n(S)$, i.e.\ $\tilde\lambda_n\le c\lambda_n$ for all $n$. If $N_{\tilde S}(\lambda)\ge n$ for a finite $n$, then $\tilde\lambda_n>\lambda$ by Lemma \ref{lemma-lambda}, so $\lambda_n\ge\tilde\lambda_n/c>\lambda/c$ and $N_S(\lambda/c)\ge n$; as the counts take values in $\{0,\dots,\infty\}$ in separable spaces, $N_{\tilde S}(\lambda)\le N_S(\lambda/c)$. Conversely, if $c$ is admissible and $\lambda<\tilde\lambda_n$, then $N_{\tilde S}(\lambda)\ge n$, so $N_S(\lambda/c)\ge n$ and $\lambda_n>\lambda/c$; letting $\lambda\uparrow\tilde\lambda_n$ gives $\tilde\lambda_n\le c\lambda_n$. Hence $\kappa(\tilde S|S)=\sup_n\tilde\lambda_n/\lambda_n$, and the formula for $D$ follows from (b) and $\lambda_n=s_n^2$, since $\frac12\log(\sup\tilde s_n^2/s_n^2\cdot\sup s_n^2/\tilde s_n^2)=\log(\sup\tilde s_n/s_n\cdot\sup s_n/\tilde s_n)$.

(d) If $f\simeq\tilde f$ are compact, the ranks coincide and $\tilde s_n\le Cs_n$; with the spectral decompositions $S=\sum_ns_n^2\langle\cdot,x_n\rangle x_n$, $\tilde S=\sum_n\tilde s_n^2\langle\cdot,\tilde x_n\rangle\tilde x_n$ (Lemma \ref{lemma-compact}) put $wy=\sum_n(\tilde s_n/s_n)\langle y,\tilde x_n\rangle x_n$, extended by $0$ on $\ker\tilde S$. Then $\langle w^*Swy,y\rangle=\sum_ns_n^2(\tilde s_n/s_n)^2|\langle y,\tilde x_n\rangle|^2=\langle\tilde Sy,y\rangle$, so $w$ is exact, and $\|w\|=\sup_n\tilde s_n/s_n=\sqrt{\kappa(\tilde S|S)}$. The same construction with the roles exchanged gives $\tilde w$, so the infimum is attained. In the example $\sup_ns_n/\tilde s_n=\sup_n((n+1)/n)^a=2^a$ (at $n=1$) and $\sup_n\tilde s_n/s_n=\sup_n(n/(n+1))^a=1$, so $D=a\log2$; for $a\ne b$ the ratios $n^{b-a}$ are unbounded in one direction.
\end{proof}

\begin{example}[Running example, continued]\label{example-running-distance}
For the families of Example \ref{example-running}, $\lambda_n(S_1)=\lambda_n(S_2)=1$ for all $n$, so $\kappa(S_2|S_1)=\kappa(S_1|S_2)=1$ and $D(f,\tilde f)=0$ by Theorem \ref{theorem-distance}(c), although $f\not\simeq_\mathrm{inv}\tilde f$. Thus $D$ is blind to the dyadic multiplicities, which are the invariants that separate $\simeq_\mathrm{inv}$ from $\simeq_\mathrm{bb}$.
\end{example}

\section{Further classes of comparison operators}
\label{section-further}

The proof of Proposition \ref{proposition-reduction} uses only that $\mathcal O$ contains the identities and is closed under composition, that is, that $\mathcal O$ is a wide subcategory of $\Hilb$. Every such class defines an equivalence relation $\simeq_{\mathcal O}$ which depends only on the frame operators. In this section we study five further classes.

\begin{definition}\label{definition-further}
A bounded operator $w:\tilde H\to H$ is called
\begin{itemize}
\item \textbf{surjective} if $\ran w=H$;
\item a \textbf{co-isometry} if $w^*$ is an isometry, that is, $ww^*=1$ (so $w$ is surjective);
\item \textbf{upper semi-Fredholm} if $\dim\ker w<\infty$ and $\ran w$ is closed;
\item \textbf{Fredholm} if it is upper semi-Fredholm and $\dim H/\ran w<\infty$;
\item of \textbf{dense range} if $\overline{\ran w}=H$.
\end{itemize}
We write $\mathcal O_\mathrm{sur}$, $\mathcal O_\mathrm{coi}$, $\mathcal O_\mathrm{usF}$, $\mathcal O_\mathrm{F}$, $\mathcal O_\mathrm{dr}$ for these classes and $\simeq_\mathrm{sur}$, $\simeq_\mathrm{coi}$, $\simeq_\mathrm{usF}$, $\simeq_\mathrm{F}$, $\simeq_\mathrm{dr}$ for the corresponding relations. Each class contains the identities and is closed under composition (for the semi-Fredholm and Fredholm classes this is classical). Moreover $\mathcal O_\mathrm{uni}\subseteq\mathcal O_\mathrm{coi}\subseteq\mathcal O_\mathrm{sur}\subseteq\mathcal O_\mathrm{dr}$, $\mathcal O_\mathrm{inv}\subseteq\mathcal O_\mathrm{sur}$, $\mathcal O_\mathrm{inv}\subseteq\mathcal O_\mathrm{F}\subseteq\mathcal O_\mathrm{usF}$ and $\mathcal O_\mathrm{bb}\subseteq\mathcal O_\mathrm{usF}$, hence (as in Proposition \ref{proposition-inclusions})
\begin{gather*}{\simeq_\mathrm{uni}}\subseteq{\simeq_\mathrm{coi}}\subseteq{\simeq_\mathrm{sur}}\subseteq{\simeq_\mathrm{dr}}\subseteq{\simeq},\qquad{\simeq_\mathrm{inv}}\subseteq{\simeq_\mathrm{F}}\subseteq{\simeq_\mathrm{usF}}\subseteq{\simeq},\\{\simeq_\mathrm{inv}}\subseteq{\simeq_\mathrm{sur}},\qquad{\simeq_\mathrm{bb}}\subseteq{\simeq_\mathrm{usF}}.\end{gather*}
\end{definition}

\begin{remark}\label{remark-further}
(a) If $\mathcal O$ is closed under multiplication by positive scalars, then $\simeq_{\mathcal O}$ does not change when $\mathcal O$ is replaced by its contractions, since $\rho(f)\circ(tw)=t\,\rho(f)\circ w$ and constants are free; in particular the class of all contractions gives the relation $\simeq$. (b) The classes of compact operators, of partial isometries and of operators with closed range do not define equivalence relations: the first does not contain the identities, and the other two are not closed under composition, which is needed for transitivity in Proposition \ref{proposition-reduction}(b).
\end{remark}

We first record how comparison operators act on kernels.

\begin{lemma}\label{lemma-kernel}
Let $S\ge0$, $\tilde S\ge0$ be bounded operators on $H$, $\tilde H$ and let $w:\tilde H\to H$ be bounded with $c\tilde S\le w^*Sw\le C\tilde S$. Then $w^{-1}(\ker S)=\ker\tilde S$. In particular $w(\ker\tilde S)\subseteq\ker S$ and $\ker w\subseteq\ker\tilde S$.
\end{lemma}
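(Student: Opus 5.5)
The whole statement should come from the identity $\rho(f)(wy)^2=\langle w^*Swy,y\rangle$ together with the two-sided bound $c\tilde S\le w^*Sw\le C\tilde S$. I would not use spectral theory beyond Lemma \ref{lemma-spectral-subspaces}(b), which gives that for a positive operator $\ker S=\{x:\langle Sx,x\rangle=0\}$. This holds because $\langle Sx,x\rangle=\|S^{1/2}x\|^2$ and $\ker S^{1/2}=\ker S$, as in Lemma \ref{lemma-basic}(a).

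The first step is to prove the equality $w^{-1}(\ker S)=\ker\tilde S$ as two inclusions.

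\emph{Inclusion $w^{-1}(\ker S)\subseteq\ker\tilde S$.} Take $y\in\tilde H$ with $wy\in\ker S$. Then
\[c\langle\tilde Sy,y\rangle\le\langle Swy,wy\rangle=0.\]
Since $c>0$, this forces $\langle\tilde Sy,y\rangle=0$, and hence $y\in\ker\tilde S$.

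\emph{Inclusion $\ker\tilde S\subseteq w^{-1}(\ker S)$.} Take $y\in\ker\tilde S$. Then
\[0\le\langle Swy,wy\rangle\le C\langle\tilde Sy,y\rangle=0,\]
so $wy\in\ker S$. Together the two inclusions give the equality.

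The consequences then follow directly. The inclusion $w(\ker\tilde S)\subseteq\ker S$ is a restatement of $\ker\tilde S\subseteq w^{-1}(\ker S)$. For the last claim, note that $\ker w=w^{-1}(0)\subseteq w^{-1}(\ker S)=\ker\tilde S$.

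None of these steps is a real obstacle. The only point that needs care is to use the strict positivity $c>0$ in the lower bound, since this is what forces $\langle\tilde Sy,y\rangle=0$ in the first inclusion.
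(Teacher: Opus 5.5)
Your proposal is correct and follows essentially the same route as the paper. The paper also proves the two inclusions from the two sides of $c\tilde S\le w^*Sw\le C\tilde S$, using that a positive operator vanishes exactly where its quadratic form does, and your derivation of the two consequences from $w^{-1}(\ker S)=\ker\tilde S$ is the immediate one.
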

\begin{proof}
If $y\in\ker\tilde S$ then $\|S^{1/2}wy\|^2=\langle w^*Swy,y\rangle\le C\langle\tilde Sy,y\rangle=0$, so $wy\in\ker S$. If $wy\in\ker S$ then $c\langle\tilde Sy,y\rangle\le\langle Swy,wy\rangle=0$, so $y\in\ker\tilde S$.
\end{proof}

\subsection{Surjective and co-isometric comparison operators}

Surjectivity brings nothing new.

\begin{theorem}\label{theorem-sur}
$\simeq_\mathrm{sur}=\simeq_\mathrm{coi}=\simeq_\mathrm{inv}=\simeq_\mathrm{uni}$.
\end{theorem}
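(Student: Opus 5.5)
Since $\mathcal O_\mathrm{uni}\subseteq\mathcal O_\mathrm{coi}\subseteq\mathcal O_\mathrm{sur}$ and $\simeq_\mathrm{uni}=\simeq_\mathrm{inv}$ (Theorem \ref{theorem-inv}), we already have ${\simeq_\mathrm{inv}}={\simeq_\mathrm{uni}}\subseteq{\simeq_\mathrm{coi}}\subseteq{\simeq_\mathrm{sur}}$. It therefore suffices to prove ${\simeq_\mathrm{sur}}\subseteq{\simeq_\mathrm{inv}}$, and the plan is to verify condition (iv) of Theorem \ref{theorem-inv}. So let $w:\tilde H\to H$ and $\tilde w:H\to\tilde H$ be surjective with $c\tilde S\le w^*Sw\le C\tilde S$ and $cS\le\tilde w^*\tilde S\tilde w\le CS$. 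We need two things: $k(f)=k(\tilde f)$, and the window inequalities in both directions.

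\emph{Kernels.} By Lemma \ref{lemma-kernel}, $w^{-1}(\ker S)=\ker\tilde S$. Since $w$ is surjective, it maps $\ker\tilde S$ onto $\ker S$. Hence $w$ induces a surjection $\ker\tilde S\to\ker S$, which gives $k(f)\le k(\tilde f)$ by linear algebra (a surjective linear map cannot increase dimension). Symmetrically, $\tilde w$ gives $k(\tilde f)\le k(f)$.

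\emph{Windows: reduction to an invertible operator.} The idea is to compress $w$ to the orthogonal complement of its kernel. Put $\tilde Z=(\ker w)^\perp$; then $w_0=w|_{\tilde Z}:\tilde Z\to H$ is invertible by the open mapping theorem. Lemma \ref{lemma-kernel} gives $\ker w\subseteq\ker\tilde S$, so $\ker w$ is a subspace of $\ker\tilde S$ that reduces $\tilde S$, and $\tilde Z$ reduces $\tilde S$ as well. Let $\tilde S_0=\tilde S|_{\tilde Z}$. For $y\in\tilde Z$ we have $\langle\tilde Sy,y\rangle=\langle\tilde S_0y,y\rangle$, so $c\tilde S_0\le w_0^*Sw_0\le C\tilde S_0$. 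The invertible case of Lemma \ref{lemma-window} then yields
\[W_{\tilde S_0}(\alpha,\beta)\le W_S(\alpha/K,K\beta).\]
Since $\tilde S$ vanishes on $\ker w$, the spectral subspaces $E(\alpha,\beta]$ with $\alpha>0$ of $\tilde S$ and of $\tilde S_0$ coincide, so $W_{\tilde S}=W_{\tilde S_0}$. Doing the same with $\tilde w$ gives the reverse inequality, and taking the larger of the two constants $K$ establishes (iv).

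The only point requiring care is checking that $\tilde Z$ reduces $\tilde S$ and that $\tilde S$ vanishes on $\ker w$. Both follow from $\ker w\subseteq\ker\tilde S$, because $\ker\tilde S$ reduces $\tilde S$ and $\tilde S$ is zero there. With this in hand, the argument is a direct reduction to Lemma \ref{lemma-window} and Theorem \ref{theorem-inv}, and I do not expect a genuine obstacle.
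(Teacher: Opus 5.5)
Your argument is correct, and your kernel step is exactly the paper's Step 1, but for the rest you take a heavier route than the paper.

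The paper never passes through invariants. It takes $w'=Pw|_{(\ker\tilde S)^\perp}$, with $P$ the projection onto $(\ker S)^\perp$, and checks by hand that $w'$ is a bijection onto $(\ker S)^\perp$. It then adds a unitary $\ker\tilde S\to\ker S$, which exists by the kernel step. This gives an invertible comparison operator with the same constants $c,C$, using only the open mapping theorem.

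You instead compress to $(\ker w)^\perp$, which is legitimate since $\ker w\subseteq\ker\tilde S$ makes it reducing. You then invoke the window lemma and the cycle (iv)$\Rightarrow$(v)$\Rightarrow$(vi)$\Rightarrow$(iii) of Theorem \ref{theorem-inv}, i.e.\ Hall's theorem, Cantor--Bernstein and the unitary lemma. This also degrades the constants to $\gamma=(2m+1)2^{m+1}$ with $2^m\ge K$.

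The detour is not needed. Your invertible $w_0$ already satisfies $c\tilde S_0\le w_0^*Sw_0\le C\tilde S_0$. Moreover $\tilde Z=(\ker\tilde S)^\perp\oplus\ker\tilde S_0$ and $\dim\ker\tilde S_0=\dim\ker S=\dim\ker\tilde S$: the first equality follows from Lemma \ref{lemma-kernel} applied to the bijection $w_0$, the second from your kernel step. So a unitary $v:\tilde H\to\tilde Z$ equal to the identity on $(\ker\tilde S)^\perp$ satisfies $v^*\tilde S_0v=\tilde S$. Then $w_0v$ is an invertible comparison operator for $(f,\tilde f)$ with the original constants.

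What your version buys is only a direct exhibition of the invariants in (iv). The paper's version buys an elementary, constant-preserving replacement of surjective comparison operators by invertible ones.

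A small point: Lemma \ref{lemma-window} assumes $w\ne0$, so you should set aside the trivial case $H=\{0\}$, where all window counts vanish.
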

\begin{proof}
By Theorem \ref{theorem-inv} and the inclusions of Definition \ref{definition-further} we have $\simeq_\mathrm{uni}\subseteq\simeq_\mathrm{coi}\subseteq\simeq_\mathrm{sur}$ and $\simeq_\mathrm{uni}=\simeq_\mathrm{inv}$, so it suffices to show $\simeq_\mathrm{sur}\subseteq\simeq_\mathrm{inv}$. Let $f\simeq_\mathrm{sur}\tilde f$, with $S=S_f$, $\tilde S=S_{\tilde f}$, and let $w:\tilde H\to H$, $\tilde w:H\to\tilde H$ be surjective with $c\tilde S\le w^*Sw\le C\tilde S$ and $cS\le\tilde w^*\tilde S\tilde w\le CS$ (Proposition \ref{proposition-reduction}(a)). Put $Z=(\ker S)^\perp$, $\tilde Z=(\ker\tilde S)^\perp$ and let $P$ be the orthogonal projection onto $Z$.

\emph{Step 1: kernels.} By Lemma \ref{lemma-kernel}, $w^{-1}(\ker S)=\ker\tilde S$. If $x\in\ker S$ then $x=wy$ for some $y$ since $w$ is surjective, and $y\in w^{-1}(\ker S)=\ker\tilde S$; hence $w(\ker\tilde S)=\ker S$ and $\dim\ker S\le\dim\ker\tilde S$. The same argument for $\tilde w$ gives the reverse inequality, so $k(f)=k(\tilde f)$.

\emph{Step 2: the operator on the complements.} Let $w'=Pw|_{\tilde Z}:\tilde Z\to Z$. It is injective: if $y\in\tilde Z$ and $Pwy=0$ then $wy\in\ker S$, so $y\in\ker\tilde S\cap\tilde Z=0$. It is surjective: let $x\in Z$ and $x=wy$ with $y=y_0+y_1$, $y_0\in\ker\tilde S$, $y_1\in\tilde Z$; then $wy_0\in\ker S$ and $x=Px=Pwy_1=w'y_1$. By the open mapping theorem $w'$ is invertible. Since $S^{1/2}$ vanishes on $\ker S$, $\rho(f)(w'y)=\rho(f)(wy)$ for $y\in\tilde Z$.

\emph{Step 3: conclusion.} Let $u:\ker\tilde S\to\ker S$ be unitary (equal dimensions) and define $w_1=w'\oplus u:\tilde H=\tilde Z\oplus\ker\tilde S\to H=Z\oplus\ker S$, an invertible operator. For $y=y_1+y_0$ as above, $\rho(f)(w_1y)=\rho(f)(w'y_1)=\rho(f)(wy_1)$ and $c\,\rho(\tilde f)(y_1)^2\le\rho(f)(wy_1)^2\le C\rho(\tilde f)(y_1)^2$, while $\rho(\tilde f)(y_1)=\rho(\tilde f)(y)$. Thus $\rho(f)\circ w_1\asymp\rho(\tilde f)$. The same construction applied to $\tilde w$ gives an invertible $\tilde w_1$ with $\rho(\tilde f)\circ\tilde w_1\asymp\rho(f)$, so $f\simeq_\mathrm{inv}\tilde f$.
\end{proof}

Thus nothing new arises from surjectivity: the finest relations see only the invertible structure, and Theorem \ref{theorem-inv} classifies all four. The upper semi-Fredholm class is different, because it allows the kernel to change by a finite-dimensional amount.

\subsection{Upper semi-Fredholm comparison operators}

We start with a necessary condition.

\begin{proposition}\label{proposition-usF-necessary}
Let $f,\tilde f$ be Bessel families, $S=S_f$, $\tilde S=S_{\tilde f}$, and let $w:\tilde H\to H$ be upper semi-Fredholm with $c\tilde S\le w^*Sw\le C\tilde S$. Then $\dim\ker\tilde S\le\dim\ker S+\dim\ker w$, and there is $\beta>0$ such that for all $\mu>0$
\[M_{\tilde S}(\mu)\le M_S(C\beta^{-2}\mu)+\dim\ker w.\]
In particular, if $f$ is upper semi-Fredholm then so is $\tilde f$.
\end{proposition}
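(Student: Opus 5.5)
The plan is to adapt the proof of Proposition \ref{proposition-necessary-cl}: restrict $w$ to $(\ker w)^\perp$, where it is bounded below, and pay the additive term $\dim\ker w$ for the part of $\tilde H$ that is lost.

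\emph{Kernels.} By Lemma \ref{lemma-kernel}, $w(\ker\tilde S)\subseteq\ker S$ and $\ker w\subseteq\ker\tilde S$. Hence $w$ induces an injective linear map $\ker\tilde S/\ker w\to\ker S$, which gives $\dim\ker\tilde S\le\dim\ker S+\dim\ker w$. The difference $\dim\ker\tilde S-\dim\ker w$ makes sense here, since $\dim\ker w<\infty$.

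\emph{Small spectrum.} Since $\ran w$ is closed, there is $\beta>0$ with $\|wy\|\ge\beta\|y\|$ for $y\in\tilde Z:=(\ker w)^\perp$. Fix $\mu>0$ and set $\nu=C\beta^{-2}\mu$. Let $X=E_{\tilde S}[0,\mu]\tilde H$. The subspace $X\cap\tilde Z$ has codimension at most $\dim\ker w$ in $X$, because $X\cap\tilde Z$ is the kernel of $P_{\ker w}|_X$. For $y\in X\cap\tilde Z$ and $x=wy$, Lemma \ref{lemma-spectral-subspaces}(c) and the hypothesis give
\[\langle Sx,x\rangle\le C\langle\tilde Sy,y\rangle\le C\mu\|y\|^2\le C\mu\beta^{-2}\|x\|^2=\nu\|x\|^2 .\]
Let $V=w(X\cap\tilde Z)$. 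Since $w$ is injective on $\tilde Z$, $\dim V=\dim(X\cap\tilde Z)$. By Lemma \ref{lemma-spectral-subspaces}(d), $V\cap E_S(\nu,\infty)H=0$. Therefore the projection onto $E_S[0,\nu]H$ is injective on $V$, so $\dim(X\cap\tilde Z)\le M_S(\nu)$. Combining this with the codimension bound gives $M_{\tilde S}(\mu)\le M_S(C\beta^{-2}\mu)+\dim\ker w$. If $M_S(\nu)=\infty$ the inequality is trivial, and in all cases the dimension arithmetic takes values in $\{0,\dots,\infty\}$.

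\emph{Consequence.} If $f$ is upper semi-Fredholm, Lemma \ref{lemma-usF}(iii) gives $\nu_0>0$ with $M_S(\nu_0)<\infty$. Taking $\mu=\beta^2\nu_0/C$ yields $M_{\tilde S}(\mu)<\infty$, so $\tilde f$ is upper semi-Fredholm by Lemma \ref{lemma-usF}.

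The main obstacle is the codimension bookkeeping: intersecting $X$ with $\tilde Z$ loses at most $\dim\ker w$ dimensions even when $X$ is infinite-dimensional. This works because $\ker w$ is finite-dimensional and $X\cap\tilde Z$ is the kernel of a map into $\ker w$. Everything else is the argument of Proposition \ref{proposition-necessary-cl} verbatim.
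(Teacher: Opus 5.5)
Your proof is correct and follows the paper's argument essentially step for step. The three steps match: the kernel count via Lemma \ref{lemma-kernel}, the codimension bound for $E_{\tilde S}[0,\mu]\tilde H\cap(\ker w)^\perp$ obtained as the kernel of the projection onto $\ker w$, and the injectivity of the projection onto $E_S[0,\nu]H$ on $w\bigl(E_{\tilde S}[0,\mu]\tilde H\cap(\ker w)^\perp\bigr)$, followed by the same deduction through Lemma \ref{lemma-usF}.
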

\begin{proof}
By Lemma \ref{lemma-kernel}, $w$ maps $\ker\tilde S$ into $\ker S$ with kernel $\ker w\cap\ker\tilde S=\ker w$, so $\dim\ker\tilde S\le\dim\ker S+\dim\ker w$. Since $w$ has closed range, it is bounded below on $(\ker w)^\perp$: $\|wy\|\ge\beta\|y\|$ there. Fix $\mu>0$, let $L=E_{\tilde S}[0,\mu]\tilde H$ and $L'=L\cap(\ker w)^\perp$; $L'$ is the kernel of the restriction to $L$ of the orthogonal projection onto $\ker w$, so $\dim L\le\dim L'+\dim\ker w$. For $y\in L'$, by Lemma \ref{lemma-spectral-subspaces}(c), $\langle Swy,wy\rangle\le C\langle\tilde Sy,y\rangle\le C\mu\|y\|^2\le\nu\|wy\|^2$ with $\nu=C\beta^{-2}\mu$. The space $V=wL'$ has dimension $\dim L'$ ($w$ is injective on $L'$) and $\langle Sx,x\rangle\le\nu\|x\|^2$ on $V$, so $V\cap E_S(\nu,\infty)H=0$ by Lemma \ref{lemma-spectral-subspaces}(d), and the orthogonal projection onto $E_S[0,\nu]H$ is injective on $V$. Hence $\dim L'\le M_S(\nu)$, which is the inequality. If $f$ is upper semi-Fredholm, $M_S(\nu_0)<\infty$ for some $\nu_0>0$ (Lemma \ref{lemma-usF}), and $\mu=\beta^2C^{-1}\nu_0$ gives $M_{\tilde S}(\mu)<\infty$, so $\tilde f$ is upper semi-Fredholm by Lemma \ref{lemma-usF}.
\end{proof}

Upper semi-Fredholm comparison operators allow the kernel to change by a finite-dimensional amount, as the following construction shows.

\begin{lemma}\label{lemma-extension}
Let $f$ be a Bessel family in $H$, $m\in\{0,1,2,\dots\}$, and let $f_m$ be a Bessel family in $H\oplus\FF^m$ with $S_{f_m}=S_f\oplus0$ (Lemma \ref{lemma-basic}(c)). Then $f\simeq_\mathrm{F}f_m$ (hence $f\simeq_\mathrm{usF}f_m$ and $f\simeq f_m$), $k(f_m)=k(f)+m$ (with $\infty+m=\infty$), and $T_{f_m}$ is compact (respectively has closed range) iff $T_f$ is. In particular $\simeq_\mathrm{inv}\subsetneq\simeq_\mathrm{F}$.
\end{lemma}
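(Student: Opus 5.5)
We need Fredholm operators $w:H\oplus\FF^m\to H$ and $\tilde w:H\to H\oplus\FF^m$ with $\rho(f)\circ w\asymp\rho(f_m)$ and $\rho(f_m)\circ\tilde w\asymp\rho(f)$. The natural choices are the coordinate projection $w(x,v)=x$ and the inclusion $\tilde w x=(x,0)$. Then
\[w^*S_fw=S_f\oplus0=S_{f_m}\qquad\text{and}\qquad\tilde w^*(S_f\oplus0)\tilde w=S_f,\]
so both comparisons hold with constants $c=C=1$, by Proposition~\ref{proposition-reduction}(a). It remains to check that these operators are Fredholm. The projection $w$ is surjective with kernel $0\oplus\FF^m$ of dimension $m$, so it is Fredholm. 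The inclusion $\tilde w$ is an isometry, so it is injective with closed range, and its range has codimension $m$; hence it is Fredholm as well. This gives $f\simeq_\mathrm F f_m$. The relations $f\simeq_\mathrm{usF}f_m$ and $f\simeq f_m$ then follow from the inclusions recorded in Definition~\ref{definition-further}.

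For the invariants, note that $\ker(S_f\oplus0)=\ker S_f\oplus\FF^m$, so $k(f_m)=k(f)+m$. Next, $S_f\oplus0$ is compact iff $S_f$ is, since it differs from $S_f\oplus\mathrm{id}_{\FF^m}$ by a finite-rank operator. Alternatively, the two operators have the same counting function $N$, so Lemma~\ref{lemma-compact} applies; Lemma~\ref{lemma-basic}(b) then transfers compactness to $T$. Finally, $\ran(S_f\oplus0)=\ran S_f\oplus0$, which is closed iff $\ran S_f$ is. For the analysis operators, $T$ has closed range iff $S=T^*T$ does, by the argument in the proof of Lemma~\ref{lemma-usF} (i)$\Leftrightarrow$(ii), which did not use finiteness of the kernel.

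For the strict inclusion $\simeq_\mathrm{inv}\subsetneq\simeq_\mathrm F$, take $m=1$ and any $f$ with $k(f)<\infty$, for instance an orthonormal basis of $\ell^2$. Then $k(f_1)=k(f)+1\ne k(f)$. Since $\simeq_\mathrm{inv}$ preserves the kernel dimension (Theorem~\ref{theorem-inv}), we get $f\not\simeq_\mathrm{inv}f_1$ while $f\simeq_\mathrm F f_1$.

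There is no real obstacle here. The only point needing care is that closedness of range transfers between $T_f$ and $S_f$ without any assumption that the kernel is finite-dimensional.
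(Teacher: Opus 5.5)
Your proof is correct and is essentially the paper's: the same inclusion $x\mapsto(x,0)$ and coordinate projection give exact comparisons and are Fredholm. The invariants are read off from $\ker(S_f\oplus0)=\ker S_f\oplus\FF^m$ and $\ran(S_f\oplus0)=\ran S_f\oplus0$, and strictness comes from $k(f_m)\ne k(f)$ when $k(f)<\infty$ and $m\ge1$, via Theorem~\ref{theorem-inv}. (The remark that $S_f\oplus0$ differs from $S_f\oplus\mathrm{id}_{\FF^m}$ by a finite-rank operator proves nothing by itself, but your alternative, equal counting functions together with Lemma~\ref{lemma-compact}, does settle compactness.)
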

\begin{proof}
Let $\iota:H\to H\oplus\FF^m$, $x\mapsto(x,0)$, and $\pi:H\oplus\FF^m\to H$ the projection. Then $\rho(f_m)\circ\iota=\rho(f)$ and $\rho(f)\circ\pi=\rho(f_m)$. The operator $\iota$ is an isometry with cokernel $\FF^m$ and $\pi$ is onto with kernel $\FF^m$, so both are Fredholm. The remaining assertions follow from $\ker(S\oplus0)=\ker S\oplus\FF^m$, $\ran(S\oplus0)=\ran S\oplus0$, and the fact that $S\oplus0$ is compact iff $S$ is. Finally, take $k(f)<\infty$ (for instance $S_f=\mathrm{id}_{\ell^2}$) and $m\ge1$: then $k(f_m)=k(f)+m\ne k(f)$, so $f\not\simeq_\mathrm{inv}f_m$ by Theorem \ref{theorem-inv}. (If $k(f)=\infty$ then $k(f_m)=k(f)$ and the kernel dimension does not separate $f$ from $f_m$.)
\end{proof}

We can now classify the upper semi-Fredholm relation.

\begin{theorem}\label{theorem-usF}
For Bessel families $f,\tilde f$ the following are equivalent:
\begin{enumerate}[(i)]
\item $f\simeq_\mathrm{usF}\tilde f$;
\item $f\simeq\tilde f$, $k(f)<\infty\iff k(\tilde f)<\infty$, and $T_f$ is upper semi-Fredholm iff $T_{\tilde f}$ is.
\end{enumerate}
\end{theorem}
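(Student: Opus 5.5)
For (i)$\Rightarrow$(ii) we collect three facts. First, $\simeq_\mathrm{usF}\subseteq\simeq$ by Definition \ref{definition-further}. Second, let $w:\tilde H\to H$ and $\tilde w:H\to\tilde H$ be upper semi-Fredholm comparison operators. Proposition \ref{proposition-usF-necessary} gives $\dim\ker\tilde S\le\dim\ker S+\dim\ker w$, and $\dim\ker w<\infty$. Hence $k(f)<\infty$ implies $k(\tilde f)<\infty$, and the converse follows by applying the same argument to $\tilde w$. Third, the last assertion of Proposition \ref{proposition-usF-necessary}, applied to $w$ and to $\tilde w$, shows that $f$ is upper semi-Fredholm iff $\tilde f$ is.

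For (ii)$\Rightarrow$(i) we reduce to Theorem \ref{theorem-closed} by means of Lemma \ref{lemma-extension}, which adjusts finite kernel dimensions at no cost. We split into two cases.

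\emph{Case 1: both kernels are infinite-dimensional.} Then $k(f)=k(\tilde f)=\infty$, so $f\simeq_\mathrm{inj}\tilde f$ by Theorem \ref{theorem-inj}. Neither family is upper semi-Fredholm, since both kernels are infinite. Theorem \ref{theorem-closed} therefore gives $f\simeq_\mathrm{bb}\tilde f$, hence $f\simeq_\mathrm{usF}\tilde f$, because $\mathcal O_\mathrm{bb}\subseteq\mathcal O_\mathrm{usF}$.

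\emph{Case 2: both kernels are finite-dimensional.} Suppose $k(f)\le k(\tilde f)$ and put $m=k(\tilde f)-k(f)$. Let $f_m$ be the family of Lemma \ref{lemma-extension}. It satisfies $f\simeq_\mathrm{F}f_m$, hence $f\simeq_\mathrm{usF}f_m$, and $k(f_m)=k(\tilde f)$. We also need to check that $f_m$ inherits the relevant invariants of $f$:
\begin{itemize}
\item $f_m\simeq f$, so $f_m\simeq\tilde f$ by transitivity, and then $f_m\simeq_\mathrm{inj}\tilde f$ by Theorem \ref{theorem-inj}.
\item $f_m$ is upper semi-Fredholm iff $f$ is. Indeed, both have finite kernel, and $T_{f_m}$ has closed range iff $T_f$ does, by Lemma \ref{lemma-extension}.
\end{itemize}
So $f_m$ is upper semi-Fredholm iff $\tilde f$ is. Theorem \ref{theorem-closed} then gives $f_m\simeq_\mathrm{bb}\tilde f$, hence $f_m\simeq_\mathrm{usF}\tilde f$. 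Transitivity (Proposition \ref{proposition-reduction}(b)) yields $f\simeq_\mathrm{usF}\tilde f$.

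The main obstacle is mild. We must check that $f_m$ keeps the same upper semi-Fredholm status as $f$; this holds because the kernel stays finite and the range closedness is preserved. We must also use that $\mathcal O_\mathrm{usF}$ is closed under composition, which is classical, so that the two links of the chain compose. No new construction beyond the reservoir lemma, already used inside Theorem \ref{theorem-closed}, is needed.
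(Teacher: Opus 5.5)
Your proposal is correct and follows the paper's proof essentially step for step. You prove (i)$\Rightarrow$(ii) by applying Proposition \ref{proposition-usF-necessary} to both comparison operators, and (ii)$\Rightarrow$(i) by passing through Theorem \ref{theorem-inj} to Theorem \ref{theorem-closed}: directly when both kernels are infinite, and after padding the kernel with the family $f_m$ of Lemma \ref{lemma-extension} plus transitivity when both are finite.
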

\begin{proof}
\emph{(i)$\Rightarrow$(ii).} The relation $f\simeq\tilde f$ holds by inclusion. Let $w,\tilde w$ be upper semi-Fredholm realizing (i). By Proposition \ref{proposition-usF-necessary}, $k(\tilde f)\le k(f)+\dim\ker w$, so $k(f)<\infty$ implies $k(\tilde f)<\infty$, and symmetrically; and $f$ upper semi-Fredholm implies $\tilde f$ upper semi-Fredholm, and symmetrically.

\emph{(ii)$\Rightarrow$(i).} \emph{Case $k(f)=k(\tilde f)=\infty$.} Neither family is upper semi-Fredholm, $f\simeq\tilde f$ and $k(f)=k(\tilde f)$, so $f\simeq_\mathrm{inj}\tilde f$ (Theorem \ref{theorem-inj}) and then $f\simeq_\mathrm{bb}\tilde f$ (Theorem \ref{theorem-closed}(iii)); and $\simeq_\mathrm{bb}\subseteq\simeq_\mathrm{usF}$.

\emph{Case $k(f),k(\tilde f)<\infty$.} By symmetry assume $k(f)\le k(\tilde f)$ and put $m=k(\tilde f)-k(f)$. With $f_m$ as in Lemma \ref{lemma-extension} we have $f_m\simeq f\simeq\tilde f$ and $k(f_m)=k(\tilde f)$, so $f_m\simeq_\mathrm{inj}\tilde f$ (Theorem \ref{theorem-inj}). Both $T_{f_m}$ and $T_f$ are upper semi-Fredholm or not simultaneously (finite kernel, same range), so by (ii) $f_m$ and $\tilde f$ are upper semi-Fredholm simultaneously, and Theorem \ref{theorem-closed} gives $f_m\simeq_\mathrm{bb}\tilde f$, hence $f_m\simeq_\mathrm{usF}\tilde f$. Together with $f\simeq_\mathrm{usF}f_m$ (Lemma \ref{lemma-extension}) and transitivity, $f\simeq_\mathrm{usF}\tilde f$.
\end{proof}

Theorem \ref{theorem-usF} yields the following classification.

\begin{corollary}\label{corollary-usF}
The noncompact families split into exactly three $\simeq_\mathrm{usF}$-classes: the upper semi-Fredholm ones ($k<\infty$ and closed range), those with $k<\infty$ and nonclosed range, and those with $k=\infty$. The compact families are classified by the pair (class of the singular numbers, whether $k<\infty$).
\end{corollary}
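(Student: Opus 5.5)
The corollary follows directly from Theorem \ref{theorem-usF}, which says that $\simeq_\mathrm{usF}$ is the intersection of $\simeq$ with two binary invariants: whether $k<\infty$, and whether $T_f$ is upper semi-Fredholm. The plan is to read off the classes in two regimes, noncompact and compact, and then check which combinations of the invariants actually occur.

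\emph{Noncompact families.} By Corollary \ref{corollary-classes}(d) all noncompact families form a single $\simeq$-class. So on them the $\simeq_\mathrm{usF}$-classes are the nonempty level sets of the pair ($k<\infty$?, upper semi-Fredholm?). Upper semi-Fredholm means $k<\infty$ together with closed range, so the combination ($k=\infty$, sF) is impossible. Three possibilities remain:
\begin{itemize}
\item $k<\infty$ and closed range, which is the same as upper semi-Fredholm;
\item $k<\infty$ and nonclosed range, which is the same as $k<\infty$ and not upper semi-Fredholm;
\item $k=\infty$, which is automatically not upper semi-Fredholm.
\end{itemize}
To see that each class is nonempty I will use the examples already used in Corollary \ref{corollary-closed}(b):
\[\mathrm{id}_{\ell^2},\qquad \mathrm{id}_{\ell^2}\oplus\operatorname{diag}(1/n),\qquad \mathrm{id}_{\ell^2}\oplus0_{\ell^2}.\]
By Lemma \ref{lemma-basic}(c) these are frame operators. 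All three are noncompact, and they realize the three cases in the order listed.

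\emph{Compact families.} Here I will show that the upper semi-Fredholm condition adds nothing beyond the other two invariants, so that the classes are given by the pair (class of $(s_n)$, whether $k<\infty$). Given $f\simeq\tilde f$ compact, Corollary \ref{corollary-classes}(a) gives equal ranks. Then I follow the argument of Corollary \ref{corollary-closed}(a):
\begin{itemize}
\item A compact family of infinite rank has $M_S(\mu)=\infty$ for every $\mu>0$, so it is never upper semi-Fredholm.
\item A compact family of finite rank has closed range, so it is upper semi-Fredholm exactly when $k<\infty$.
\end{itemize}
In both cases, once the rank and the finiteness of $k$ are fixed, upper semi-Fredholmness is determined. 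Theorem \ref{theorem-usF} then shows that $f\simeq_\mathrm{usF}\tilde f$ holds iff $f\simeq\tilde f$ and the finiteness of $k$ agrees. The $\simeq$-classes themselves are described by Corollary \ref{corollary-classes}(b),(c), via Theorem \ref{theorem-singular}.

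\emph{Main obstacle.} There is no real obstacle; the work is the bookkeeping of which invariant combinations are realizable. The one point to state carefully is the finite-rank compact case. A finite-rank family on a finite-dimensional $H$ always has $k<\infty$. A finite-rank family on an infinite-dimensional $H$ has $k=\infty$. So for finite rank the pair (rank, finiteness of $k$) is really (rank, whether $\dim H<\infty$). Both options occur for every finite rank, for example $\operatorname{diag}(1,\dots,1)$ on $\FF^r$ versus $\operatorname{diag}(1,\dots,1)\oplus0_{\ell^2}$.
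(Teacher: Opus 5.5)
Your proposal is correct and follows the paper's proof essentially step for step: you reduce via Corollary \ref{corollary-classes}(d) and Theorem \ref{theorem-usF} to the two binary invariants, realize the three noncompact classes by $\mathrm{id}_{\ell^2}$, $\mathrm{id}_{\ell^2}\oplus\operatorname{diag}(1/n)$ and $\mathrm{id}_{\ell^2}\oplus0_{\ell^2}$, and for compact families use the argument of Corollary \ref{corollary-closed}(a) to see that upper semi-Fredholmness is determined by the rank and the finiteness of $k$. One small misattribution: Corollary \ref{corollary-closed}(b) actually uses $\mathrm{id}_{\ell^2}\oplus0_{\FF^k}$ and $\mathrm{id}_{\ell^2}\oplus\operatorname{diag}(1/n)\oplus0_{\FF^k}$, not your exact triple, but your triple is the one the paper uses here and it works.
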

\begin{proof}
All noncompact families are $\simeq$-equivalent (Corollary \ref{corollary-classes}(d)), so Theorem \ref{theorem-usF} leaves the invariants ``$k<\infty$'' and ``upper semi-Fredholm'', and the latter implies the former. The three possibilities occur: $S=\mathrm{id}_{\ell^2}$, $S=\mathrm{id}_{\ell^2}\oplus\operatorname{diag}(1/n)$ and $S=\mathrm{id}_{\ell^2}\oplus0_{\ell^2}$ (Lemma \ref{lemma-basic}(c)). For compact families, upper semi-Fredholmness is determined by the other invariants: a compact family of infinite rank is not upper semi-Fredholm (proof of Corollary \ref{corollary-closed}(a)), and one of finite rank is upper semi-Fredholm iff $k<\infty$.
\end{proof}

In particular the two $\simeq_\mathrm{bb}$-classes $(k,\mathrm{nsF})$ and $(k',\mathrm{nsF})$ with finite $k\ne k'$, as well as $(k,\mathrm{sF})$ and $(k',\mathrm{sF})$, are merged, and the relations $\simeq_\mathrm{inj}$ and $\simeq_\mathrm{usF}$ are incomparable: two noncompact upper semi-Fredholm families with $k=0$ (a frame) and $k=1$ (a frame for its closed span, of codimension $1$) are $\simeq_\mathrm{usF}$- but not $\simeq_\mathrm{inj}$-equivalent, while a frame and a nonframe with $k=0$ are $\simeq_\mathrm{inj}$- but not $\simeq_\mathrm{usF}$-equivalent.

\subsection{Fredholm comparison operators}

A Fredholm comparison operator changes the kernel by a finite amount, like an upper semi-Fredholm one, but it also has a finite-dimensional cokernel, and this is enough to keep the multiplicities of the spectrum. We first record the easy part of the classification.

\begin{proposition}\label{proposition-fredholm}
\begin{enumerate}[(a)]
\item ${\simeq_\mathrm{inv}}\subseteq{\simeq_\mathrm{F}}\subseteq{\simeq_\mathrm{usF}}$.
\item If $f\simeq_\mathrm{usF}\tilde f$ and either $T_f,T_{\tilde f}$ are both compact or both upper semi-Fredholm, then $f\simeq_\mathrm{F}\tilde f$.
\end{enumerate}
\end{proposition}
\begin{proof}
(a) is the inclusion of the classes. (b) By Theorem \ref{theorem-usF}, $f\simeq\tilde f$ and $k(f)<\infty\iff k(\tilde f)<\infty$. If $k(f)=k(\tilde f)=\infty$ then $f\simeq_\mathrm{inj}\tilde f$, both families are not upper semi-Fredholm, hence, by the hypothesis, both are compact and $f\simeq_\mathrm{inv}\tilde f$ by Corollary \ref{corollary-inv}(a). If $k(f),k(\tilde f)<\infty$, assume $k(f)\le k(\tilde f)$ and let $f_m$ be as in the proof of Theorem \ref{theorem-usF}, so $f\simeq_\mathrm{F}f_m$ and $f_m\simeq_\mathrm{inj}\tilde f$. If $T_f,T_{\tilde f}$ are compact, so is $T_{f_m}$ and $f_m\simeq_\mathrm{inv}\tilde f$ by Corollary \ref{corollary-inv}(a). If both are upper semi-Fredholm and noncompact, $f_m$ and $\tilde f$ are noncompact with closed range and the same kernel dimension, so $f_m\simeq_\mathrm{inv}\tilde f$ by Corollary \ref{corollary-inv}(b); if they are upper semi-Fredholm and compact we are in the compact case. In all cases $f\simeq_\mathrm{F}f_m\simeq_\mathrm{inv}\tilde f$, and $\simeq_\mathrm{inv}\subseteq\simeq_\mathrm{F}$.
\end{proof}

Unlike $\simeq_\mathrm{usF}$, the relation $\simeq_\mathrm{F}$ keeps the infinite multiplicities of the spectrum. This is a consequence of the window lemma (Lemma \ref{lemma-window}), which was stated for Fredholm operators with an additive error.

\begin{theorem}\label{theorem-fredholm}
Let $f,\tilde f$ be Bessel families with $f\simeq_\mathrm{F}\tilde f$, $S=S_f$, $\tilde S=S_{\tilde f}$.
\begin{enumerate}[(a)]
\item There are $K\ge1$ and an integer $p\ge0$ with
\[W_{\tilde S}(\alpha,\beta)\le W_S(\alpha/K,K\beta)+p\quad\text{and}\quad W_S(\alpha,\beta)\le W_{\tilde S}(\alpha/K,K\beta)+p\qquad(0<\alpha<\beta).\]
\item Let $Z_\infty(f)=\{j\in\ZZ:d_j(f)=\infty\}$ be the set of bands of infinite multiplicity. There is $m\in\NN$ such that every point of $Z_\infty(\tilde f)$ lies within distance $m$ of $Z_\infty(f)$, and vice versa.
\end{enumerate}
\end{theorem}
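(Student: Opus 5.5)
Part (a) follows by applying the window lemma (Lemma \ref{lemma-window}) to both comparison operators. By Proposition \ref{proposition-reduction}(a), $f\simeq_\mathrm{F}\tilde f$ gives Fredholm operators $w:\tilde H\to H$ and $\tilde w:H\to\tilde H$ with $c\tilde S\le w^*Sw\le C\tilde S$ and $cS\le\tilde w^*\tilde S\tilde w\le CS$.

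First we dispose of degenerate cases. If $w=0$, then $\tilde S=0$ and $w$ Fredholm forces $\dim\tilde H<\infty$ and $\dim H<\infty$. Then all window counts are bounded by $\dim H+\dim\tilde H$, so (a) holds with $p$ equal to that sum. The same applies if $\tilde w=0$.

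Otherwise Lemma \ref{lemma-window} applies to $w$ with constant $K_1$ and error $2(\dim\ker w+\dim H/\ran w)$. Applied to $\tilde w$, with the roles of $S$ and $\tilde S$ exchanged, it gives $K_2$ and the corresponding error. We take $K=\max(K_1,K_2)$ and let $p$ be the larger of the two errors. Enlarging $K$ is harmless because $W_S(\alpha/K,K\beta)$ is nondecreasing in $K$, since the window only grows.

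For (b) we argue as in the step (iv)$\Rightarrow$(v) of Theorem \ref{theorem-inv}. Choose $m\in\NN$ with $2^m\ge K$. For $j\in Z_\infty(\tilde f)$, put $\alpha=2^{-j-1}$ and $\beta=2^{-j}$. Then (a) gives
\[\infty=d_j(\tilde S)=W_{\tilde S}(2^{-j-1},2^{-j})\le W_S(2^{-j-1}/K,K2^{-j})+p\le d_{[j-m,j+m]}(S)+p,\]
using $(2^{-j-1}/K,K2^{-j}]\subseteq(2^{-(j+m)-1},2^{-(j-m)}]$. Since $p$ is finite, $d_{[j-m,j+m]}(S)=\infty$. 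A finite sum of dimensions is infinite only if one term is infinite, so some $l$ with $|l-j|\le m$ lies in $Z_\infty(f)$. The symmetric inequality from (a) gives the converse direction with the same $m$.

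There is no real obstacle, since all the analytic work is in Lemma \ref{lemma-window}, which was stated with the additive Fredholm error precisely for this use. The only points needing care are the nonzero hypothesis of that lemma, handled by the degenerate case above, and the observation that an additive finite constant cannot destroy an infinite multiplicity. That observation is exactly what makes (b) survive the passage from invertible to Fredholm comparison operators.
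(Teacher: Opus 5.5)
Your proposal is correct and follows the paper's proof essentially step by step. For (a) you apply the window lemma to $w$ and to $\tilde w$, dispose of a zero comparison operator through finite-dimensionality, and take the larger $K$ and the larger additive error. For (b) you choose $2^m\ge K$ and use the window inclusion together with the fact that a finite $p$ cannot absorb an infinite multiplicity.
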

\begin{proof}
(a) Let $w:\tilde H\to H$ and $\tilde w:H\to\tilde H$ be Fredholm operators realizing $\simeq_\mathrm{F}$, with $c\tilde S\le w^*Sw\le C\tilde S$ and $cS\le\tilde w^*\tilde S\tilde w\le CS$. If $w\ne0$, Lemma \ref{lemma-window} gives $W_{\tilde S}(\alpha,\beta)\le W_S(\alpha/K_1,K_1\beta)+2(k_1+q_1)$ with $k_1=\dim\ker w$ and $q_1=\dim(H/\ran w)$; if $w=0$ then $k_1=\dim\tilde H<\infty$ and the inequality is trivial. The same holds for $\tilde w$. Take $K$ the larger of the two constants and $p=2\max(k_1+q_1,\tilde k_1+\tilde q_1)$, and use that $W_S(\alpha,\beta)$ is nondecreasing in the window.

(b) Choose $m\in\NN$ with $2^m\ge K$. If $d_j(\tilde f)=\infty$ then $W_{\tilde S}(2^{-j-1},2^{-j})=\infty$, so by (a) $W_S(2^{-j-1}/K,K2^{-j})=\infty$, and $(2^{-j-1}/K,K2^{-j}]\subseteq(2^{-(j+m)-1},2^{-(j-m)}]$ gives $d_{[j-m,j+m]}(f)=\infty$; hence some $l\in[j-m,j+m]$ has $d_l(f)=\infty$. The converse is symmetric.
\end{proof}

\begin{corollary}\label{corollary-fredholm}
\begin{enumerate}[(a)]
\item The relation $\simeq_\mathrm{F}$ is strictly finer than $\simeq_\mathrm{usF}$: the families of Example \ref{example-running} are $\simeq_\mathrm{usF}$- but not $\simeq_\mathrm{F}$-equivalent.
\item For each $k\in\{0,1,\dots,\infty\}$ the noncompact families with $\dim\ker T_f=k$ and nonclosed range split into exactly $2^{\aleph_0}$ classes of $\simeq_\mathrm{F}$, although they lie in a single class of $\simeq_\mathrm{usF}$.
\end{enumerate}
Consequently $\simeq_\mathrm{inv}\subsetneq\simeq_\mathrm{F}\subsetneq\simeq_\mathrm{usF}\subsetneq\simeq$, and $\simeq_\mathrm{F}$ has $2^{\aleph_0}$ classes of noncompact families, in contrast with the three classes of $\simeq_\mathrm{usF}$.
\end{corollary}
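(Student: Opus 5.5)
For (a), I would apply Theorem \ref{theorem-fredholm}(b) to the two families of Example \ref{example-running}. First, $f\simeq_\mathrm{usF}\tilde f$ follows from Theorem \ref{theorem-usF}. Both families are noncompact, so $f\simeq\tilde f$ by Corollary \ref{corollary-classes}(d). Both have $k=0$. Neither is upper semi-Fredholm, since $M_{S_1}(\mu)=M_{S_2}(\mu)=\infty$ for every $\mu>0$.

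Next I would compute the sets of infinite bands, $Z_\infty(f)=\{0\}$ and $Z_\infty(\tilde f)=\{0,1,2,\dots\}$. The points $j>m$ of $Z_\infty(\tilde f)$ are at distance $j>m$ from $Z_\infty(f)$ for every $m$. This contradicts Theorem \ref{theorem-fredholm}(b), so $f\not\simeq_\mathrm{F}\tilde f$.

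For (b), I would fix $k$ and reuse the test operators $S_\sigma=\bigoplus_{j\in J_\sigma}2^{-j}\mathrm{id}_{\ell^2}\oplus 0_{\FF^k}$ from the proof of Corollary \ref{corollary-inv}(c). Each $S_\sigma$ is noncompact, has kernel dimension $k$ and nonclosed range, and satisfies $Z_\infty(S_\sigma)=J_\sigma=\{t^2:t\in A_\sigma\}$. Now suppose $S_\sigma\simeq_\mathrm{F}S_\tau$ with $\sigma\ne\tau$, and take $m$ as in Theorem \ref{theorem-fredholm}(b). Choose $t\in A_\tau\setminus A_\sigma$ with $2t-1>m$, which is possible because $A_\tau\setminus A_\sigma$ is infinite. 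Then $t^2$ lies within $m$ of some $s^2\in J_\sigma$. Distinct squares near $t^2$ differ by at least $2t-1>m$, so $s=t$, a contradiction exactly as in Step 3 there. This yields at least $2^{\aleph_0}$ classes.

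For the upper bound, I would note that $\simeq_\mathrm{inv}\subseteq\simeq_\mathrm{F}$ (Proposition \ref{proposition-fredholm}(a)). Each $\simeq_\mathrm{F}$-class is therefore a union of $\simeq_\mathrm{inv}$-classes, and there are at most $2^{\aleph_0}$ of those in the given set by Corollary \ref{corollary-inv}(c). Finally, all these families form a single $\simeq_\mathrm{usF}$-class by Corollary \ref{corollary-usF}: they are noncompact and not upper semi-Fredholm, and $k<\infty$ holds for all of them or for none, since $k$ is fixed.

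The concluding chain of inclusions then follows:
\begin{itemize}
\item $\simeq_\mathrm{inv}\subsetneq\simeq_\mathrm{F}$ by Lemma \ref{lemma-extension};
\item $\simeq_\mathrm{F}\subsetneq\simeq_\mathrm{usF}$ by (a);
\item $\simeq_\mathrm{usF}\subsetneq\simeq$ because a frame and a noncompact nonframe are $\simeq$-equivalent but not $\simeq_\mathrm{usF}$-equivalent (Theorem \ref{theorem-usF}).
\end{itemize}
The count of $2^{\aleph_0}$ noncompact classes follows from (b), compared with the three classes of Corollary \ref{corollary-usF}. The only nonroutine step is the separation in (b), and that is already handled by the squares argument.
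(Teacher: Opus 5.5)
Your proposal is correct and follows the paper's proof essentially step for step. Part (a) goes through Theorem~\ref{theorem-usF} and the sets $Z_\infty$ of Theorem~\ref{theorem-fredholm}(b), and part (b) uses the test operators $S_\sigma$, the squares argument, and the upper bound from $\simeq_\mathrm{inv}\subseteq\simeq_\mathrm{F}$ together with Corollary~\ref{corollary-inv}(c).

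One small fix is needed for $\simeq_\mathrm{usF}\subsetneq\simeq$. The witness ``a frame and a noncompact nonframe'' is too broad as stated. A nonframe such as $\mathrm{id}_{\ell^2}\oplus0_{\FF}$ is upper semi-Fredholm with finite kernel, and is therefore $\simeq_\mathrm{usF}$-equivalent to a frame. Take instead a nonframe with $k=0$, which necessarily has nonclosed range, e.g.\ $\mathrm{id}_{\ell^2}\oplus\operatorname{diag}(1/n)$, as the paper does.
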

\begin{proof}
(a) Let $S_1=\mathrm{id}_{\ell^2}\oplus\operatorname{diag}(1/n)$ and $S_2=\bigoplus_{j\ge0}2^{-j}\mathrm{id}_{\ell^2}$. Both are noncompact, with $k=0$ and not upper semi-Fredholm, so they are $\simeq_\mathrm{usF}$-equivalent by Corollary \ref{corollary-usF}. But $Z_\infty(S_1)=\{0\}$ and $Z_\infty(S_2)=\{j\ge0\}$, and for $j>m$ the point $j\in Z_\infty(S_2)$ is at distance more than $m$ from $Z_\infty(S_1)$, whatever $m$ is; by Theorem \ref{theorem-fredholm}(b), $S_1\not\simeq_\mathrm{F}S_2$.

(b) The classes of $\simeq_\mathrm{F}$ are unions of classes of $\simeq_\mathrm{inv}$ (Proposition \ref{proposition-fredholm}(a)), so there are at most $2^{\aleph_0}$ of them by Corollary \ref{corollary-inv}(c). Consider the operators $S_\sigma$ of the proof of Corollary \ref{corollary-inv}(c): $Z_\infty(S_\sigma)=J_\sigma=\{t^2:t\in A_\sigma\}$. Suppose $S_\sigma\simeq_\mathrm{F}S_\tau$ with $\sigma\ne\tau$ and let $m$ be as in Theorem \ref{theorem-fredholm}(b). Choose $t\in A_\tau\setminus A_\sigma$ with $2t-1>m$. Then $t^2\in Z_\infty(S_\tau)$, so some $s^2\in J_\sigma$ has $|s^2-t^2|\le m$; but distinct squares differ by at least $2t-1>m$, so $s=t$ and $t\in A_\sigma$, a contradiction. So the $S_\sigma$ lie in $2^{\aleph_0}$ different classes of $\simeq_\mathrm{F}$, while they lie in a single class of $\simeq_\mathrm{usF}$ (Corollary \ref{corollary-usF}). The strictness of the inclusions follows from (a), Lemma \ref{lemma-extension} and the fact that $\simeq_\mathrm{usF}\ne\simeq$ (a frame and a nonframe with $k=0$ are $\simeq$- but not $\simeq_\mathrm{usF}$-equivalent).
\end{proof}

The complete classification of $\simeq_\mathrm{F}$ is not known; see Problem \ref{problem-fredholm}.

\begin{example}[Running example, continued]\label{example-running-fredholm}
For the families of Example \ref{example-running} the Fredholm relation already separates $f$ from $\tilde f$: a Fredholm operator $w:\tilde H\to H$ with $w^*S_1w\asymp S_2$ would give $W_{S_2}(2^{-j-1},2^{-j})=\infty\le W_{S_1}(2^{-j-1}/K,K2^{-j})+p<\infty$ for $j$ large, which is absurd. In the proof of the window lemma, the finitely many dimensions that a Fredholm operator can add or remove are all that separates it from an invertible one, and they can only change a finite multiplicity.
\end{example}

\subsection{Dense range comparison operators}

The relation defined by operators of dense range is treated in the following proposition.

\begin{proposition}\label{proposition-dense}
\begin{enumerate}[(a)]
\item ${\simeq_\mathrm{inv}}\subseteq{\simeq_\mathrm{dr}}\subseteq{\simeq}$.
\item If $f\simeq_\mathrm{dr}\tilde f$, then $\ran T_f$ is closed iff $\ran T_{\tilde f}$ is; and if $H$ or $\tilde H$ is finite-dimensional then $\dim H=\dim\tilde H$. In particular $\simeq_\mathrm{dr}\ne\simeq$.
\item If $\ran T_f$ and $\ran T_{\tilde f}$ are closed, the following are equivalent: $f\simeq_\mathrm{dr}\tilde f$; $f\simeq_\mathrm{inj}\tilde f$; $f\simeq_\mathrm{inv}\tilde f$; $f\simeq\tilde f$ and $k(f)=k(\tilde f)$.
\end{enumerate}
\end{proposition}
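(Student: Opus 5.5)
Part (a) is immediate from the inclusions of classes in Definition \ref{definition-further}: $\mathcal O_\mathrm{inv}\subseteq\mathcal O_\mathrm{sur}\subseteq\mathcal O_\mathrm{dr}\subseteq\mathcal O_\mathrm{all}$. So the work is in (b) and (c).

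For (b), let $w:\tilde H\to H$ and $\tilde w:H\to\tilde H$ have dense range, with $c\tilde S\le w^*Sw\le C\tilde S$ and $cS\le\tilde w^*\tilde S\tilde w\le CS$ (Proposition \ref{proposition-reduction}(a)). Suppose $\ran T_f$ is closed but $k(f)$ may be infinite. Then Lemma \ref{lemma-usF}'s argument gives $S\ge\delta P$, where $P$ is the projection onto $Z=(\ker S)^\perp$. I want to show that $\tilde S$ is bounded below on $\tilde Z=(\ker\tilde S)^\perp$. By Lemma \ref{lemma-kernel}, $w^{-1}(\ker S)=\ker\tilde S$, so $Pw|_{\tilde Z}$ is injective. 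Moreover $\ran(Pw)$ is dense in $Z$, since $w$ has dense range, and $Pw$ kills $\ker\tilde S$, so $\ran(Pw|_{\tilde Z})$ is dense in $Z$.

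I will not argue via $w$ directly. Instead I use the reverse operator. Closedness of the range of $T_f$ means that $\rho(f)$ is equivalent to $\|P\cdot\|$, and I need $\rho(\tilde f)\ge\varepsilon\|\tilde P\cdot\|$.

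The key step is the following. From $\tilde w^*\tilde S\tilde w\le CS\le C\|S\|P$ and $\ge c\delta P$, the seminorm $\rho(\tilde f)$ restricted to $\ran\tilde w$ is equivalent to $\|P\tilde w^{-1}\cdot\|$. I expect the main obstacle here: transporting bounded-belowness through a dense-range, not necessarily closed-range, operator. My plan is to use $\rho(\tilde f)(\tilde wx)^2\asymp\|Px\|^2$ for all $x\in H$, and to argue by contradiction. If $\tilde S$ is not bounded below on $\tilde Z$, take unit vectors $y_n\in E_{\tilde S}(0,1/n]\tilde H$. Density lets me approximate each $y_n$ by $\tilde wx_n$. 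Since $\rho(\tilde f)$ is continuous, $\rho(\tilde f)(\tilde wx_n)$ is small, hence $\|Px_n\|$ is small, and I then need to derive a contradiction with $y_n\perp\ker\tilde S$. To make this work I would combine it with the $w$-side: $\|Pwy\|^2\asymp\rho(f)(wy)^2\asymp\rho(\tilde f)(y)^2$. So $Pw|_{\tilde Z}$ is an injective operator into $Z$ with $\|Pwy\|\asymp\rho(\tilde f)(y)$, and its dense range makes the completion of $(\tilde Z,\rho(\tilde f))$ isomorphic to $Z$. Similarly, $\tilde P\tilde w|_Z$ is a bounded operator $Z\to\tilde Z$ with $\rho(\tilde f)(\tilde P\tilde wx)\asymp\|x\|$. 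Composing, $v=Pw\tilde P\tilde w|_Z:Z\to Z$ satisfies $\|vx\|\asymp\rho(\tilde f)(\tilde P\tilde wx)\asymp\|x\|$, so $v$ is bounded below. I would also try to show that $v$ has dense range, which would make $v$ invertible and $Pw|_{\tilde Z}$ surjective. Then $Pw|_{\tilde Z}$ is bijective, hence invertible by the open mapping theorem, and $\rho(\tilde f)\asymp\|Pw\cdot\|$ is bounded below on $\tilde Z$. By symmetry this gives the closed-range equivalence.

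For the dimension claim in (b), a dense-range operator into a finite-dimensional space is surjective. So if $\dim H<\infty$, then $w$ is onto and $\dim\tilde H\ge\dim H$, while $\tilde w$ having dense range in $\tilde H$ forces $\dim\tilde H\le\dim H$. The case $\dim\tilde H<\infty$ is symmetric. For $\simeq_\mathrm{dr}\ne\simeq$, take the rank-one families $S=1$ on $\FF$ and $S=1\oplus0$ on $\FF^2$: these are $\simeq$-equivalent by Corollary \ref{corollary-classes}(b), but the spaces have unequal finite dimensions.

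For (c), the chain $\simeq_\mathrm{inv}\subseteq\simeq_\mathrm{dr}$ is (a), and $\simeq_\mathrm{inj}\iff(\simeq\text{ and }k(f)=k(\tilde f))$ is Theorem \ref{theorem-inj}. Under closed ranges I show $\simeq_\mathrm{inj}\Rightarrow\simeq_\mathrm{inv}$. In the compact case this is Corollary \ref{corollary-inv}(a). In the noncompact case it is Corollary \ref{corollary-inv}(b), since $k(f)=k(\tilde f)$. It remains to show $\simeq_\mathrm{dr}\Rightarrow k(f)=k(\tilde f)$. I would argue via the kernels: $\ker S=Z^\perp$. By the argument of (b), $Pw|_{\tilde Z}:\tilde Z\to Z$ is invertible, so $\rho(f)\circ w$ splits off. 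Kernel equality should then follow by a codimension count as in Theorem \ref{theorem-sur}, Step 1, where $w$ maps $\ker\tilde S$ into $\ker S$ with dense image. Since $\ker S$ is closed and $w^{-1}(\ker S)=\ker\tilde S$, density forces $\overline{w(\ker\tilde S)}=\ker S$ once $Pw|_{\tilde Z}$ is onto $Z$. That gives $k(f)\le k(\tilde f)$, and symmetry gives the reverse inequality.
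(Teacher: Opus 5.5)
Part (a), the dimension statement, your example for $\simeq_\mathrm{dr}\ne\simeq$, and the reduction in (c) to Theorem \ref{theorem-inj} and Corollary \ref{corollary-inv}(a),(b) are all fine. The example uses a dimension mismatch rather than the paper's closed-range example, but it is valid.

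Your route for the closed-range claim in (b) differs from the paper's. The paper does no kernel bookkeeping there. It extends $\tilde S^{1/2}y\mapsto S^{1/2}wy$ to an isomorphism $V$ of $\overline{\ran\tilde S^{1/2}}$ onto $\overline{\ran S^{1/2}}$ with $V(\ran\tilde S^{1/2})\subseteq\ran S^{1/2}$, and closedness transfers in one line through the symmetric $V'$. Your route yields more: from the closedness of $\ran T_f$ alone, $Pw|_{\tilde Z}:\tilde Z\to Z$ is an isomorphism, which you then reuse in (c).

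However, your route hinges on $v=Pw\tilde P\tilde w|_Z$ having dense range, which you only say you ``would try to show''. It does hold, but you need to prove it:
\begin{enumerate}[(1)]
\item By Lemma \ref{lemma-kernel} applied to $\tilde w$, $\tilde w(\ker S)\subseteq\ker\tilde S$, so $\tilde P\tilde w(Z)=\tilde P\tilde w(H)$, which is dense in $\tilde Z$.
\item Since $Pw$ is continuous and kills $\ker\tilde S$, $v(Z)$ is dense in $Pw(\tilde Z)=Pw(\tilde H)$, and this is dense in $Z$.
\end{enumerate}
You could also skip $v$ altogether. For $x\in Z$,
\[\|x\|\le K\rho(\tilde f)(\tilde P\tilde wx)\le K\|\tilde S\|^{1/2}\|\tilde P\tilde wx\|,\qquad K=(c\delta)^{-1/2}.\]
So $\tilde P\tilde w|_Z$ is bounded below with dense range, hence an isomorphism $Z\to\tilde Z$, and the estimate transports to $\rho(\tilde f)\ge\varepsilon\|\cdot\|$ on $\tilde Z$. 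The abandoned contradiction argument can simply be deleted.

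The more serious gap is in (c), where you assert $\overline{w(\ker\tilde S)}=\ker S$. This is the step on which the paper spends most of its effort, and your pointer to Step 1 of Theorem \ref{theorem-sur} does not carry it. There one writes $x=wy$ exactly and reads off $y\in\ker\tilde S$. Here $x_0\in\ker S$ is only a limit of $wy_n$, with $y_n=y_{n,0}+y_{n,1}$, $y_{n,0}\in\ker\tilde S$, $y_{n,1}\in\tilde Z$. The pieces $(1-P)wy_{n,1}\in\ker S$ might a priori account for the limit. Closedness of $\ker S$ and $w^{-1}(\ker S)=\ker\tilde S$ do not exclude this.

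What excludes it is the ingredient you mention only in passing: the bounded-belowness of $Pw$ on $\tilde Z$. Since $w(\ker\tilde S)\subseteq\ker S$, you have $Pwy_{n,0}=0$. Hence $Pwy_{n,1}=Pwy_n\to Px_0=0$, so $y_{n,1}\to0$, and therefore $wy_{n,0}\to x_0$. This short argument replaces the paper's orthogonality argument, which constructs $x_1\in Z$ with $x_0+x_1\perp\ran w$. It gives that $w(\ker\tilde S)$ is dense in $\ker S$. Then $\dim\ker S\le\dim\ker\tilde S$ follows, since when $\ker\tilde S$ is finite-dimensional the dense image is closed, and symmetry finishes (c).
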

\begin{proof}
(a) is clear. Write $S=S_f$, $A=S^{1/2}$, $\tilde S=S_{\tilde f}$, $\tilde A=\tilde S^{1/2}$, and let $w,\tilde w$ be of dense range with $\|Awy\|\asymp\|\tilde Ay\|$ and $\|\tilde A\tilde wx\|\asymp\|Ax\|$ (Proposition \ref{proposition-reduction}(a)).

(b) \emph{Closed range.} Recall that $\ran T_f$ is closed iff $\ran S$ is iff $\ran A$ is (each means that $S$ is bounded below on $(\ker S)^\perp$). The map $\tilde Ay\mapsto Awy$ is well defined, linear, bounded and bounded below from $\ran\tilde A$ onto $A(w\tilde H)$, so it extends to an isomorphism $V$ of $\overline{\ran\tilde A}$ onto $\overline{A(w\tilde H)}$. As $w\tilde H$ is dense in $H$, $A(w\tilde H)$ is dense in $\ran A$, so $V$ is an isomorphism of $\overline{\ran\tilde A}$ onto $\overline{\ran A}$ with $V(\ran\tilde A)\subseteq\ran A$. Symmetrically there is an isomorphism $V'$ of $\overline{\ran A}$ onto $\overline{\ran\tilde A}$ with $V'(\ran A)\subseteq\ran\tilde A$. If $\ran A$ is closed then $\ran\tilde A\supseteq V'(\ran A)=V'(\overline{\ran A})=\overline{\ran\tilde A}$, so $\ran\tilde A$ is closed; the converse is symmetric.

\emph{Dimensions.} If $\dim H<\infty$, then $w$ has dense, hence full, range, so $\dim H\le\dim\tilde H$; and $\ran\tilde w$ is a dense subspace of $\tilde H$ of dimension at most $\dim H$, so $\dim\tilde H\le\dim H$. The case $\dim\tilde H<\infty$ is symmetric. Finally, $\mathrm{id}_{\ell^2}$ and $\mathrm{id}_{\ell^2}\oplus\operatorname{diag}(1/n)$ are $\simeq$-equivalent, but the first has closed range and the second does not.

(c) By Theorem \ref{theorem-inj}, $f\simeq_\mathrm{inj}\tilde f$ iff $f\simeq\tilde f$ and $k(f)=k(\tilde f)$. If $f\simeq_\mathrm{inj}\tilde f$ then $f\simeq_\mathrm{inv}\tilde f$: if both families are compact (finite rank) this is Corollary \ref{corollary-inv}(a), and if both are noncompact it is Corollary \ref{corollary-inv}(b) (they are $\simeq$-equivalent, so both are compact or both are not). Since $\simeq_\mathrm{inv}\subseteq\simeq_\mathrm{dr}$, it remains to show that $f\simeq_\mathrm{dr}\tilde f$ implies $k(f)=k(\tilde f)$ (then $f\simeq\tilde f$ by (a)). Let $Z=(\ker S)^\perp$, $\tilde Z=(\ker\tilde S)^\perp$ with projections $P,\tilde P$. As $\ran A$ is closed, $\|Ax\|\ge\delta\|Px\|$ for all $x$ for some $\delta>0$, and likewise for $\tilde A$; hence $\|Awy\|\asymp\|\tilde Ay\|$ reads $\|Pwy\|\asymp\|\tilde Py\|$. So $b=Pw:\tilde H\to Z$ vanishes on $\ker\tilde S$ and is bounded below on $\tilde Z$, whence $b(\tilde Z)$ is closed; it is also dense in $Z$ because $w$ has dense range, so $b|_{\tilde Z}:\tilde Z\to Z$ is invertible. Let $a=(1-P)w:\tilde H\to\ker S$. We claim that $a(\ker\tilde S)$ is dense in $\ker S$. Let $x_0\in\ker S$ be orthogonal to $a(\ker\tilde S)$ and put $x_1=-\bigl((b|_{\tilde Z})^{-1}\bigr)^*(a|_{\tilde Z})^*x_0\in Z$. For $y=y_0+y_1$ with $y_0\in\ker\tilde S$, $y_1\in\tilde Z$,
\[\langle wy,x_0+x_1\rangle=\langle ay_0,x_0\rangle+\langle ay_1,x_0\rangle+\langle by_1,x_1\rangle=0+\langle ay_1,x_0\rangle-\langle ay_1,x_0\rangle=0,\]
so $x_0+x_1\perp\ran w$, hence $x_0+x_1=0$ and $x_0=0$. Thus $\ker S$ has a dense subspace which is the image of $\ker\tilde S$, so $\dim\ker S\le\dim\ker\tilde S$ (if $\dim\ker\tilde S<\infty$ the image is closed, hence equal to $\ker S$). Exchanging the roles gives $k(f)=k(\tilde f)$.
\end{proof}

Combining the results of this section, we obtain that the relations coincide on families with closed range.

\begin{corollary}\label{corollary-closed-range}
On pairs of families whose analysis operators both have closed range, the relations $\simeq_\mathrm{inj}$, $\simeq_\mathrm{bb}$, $\simeq_\mathrm{iso}$, $\simeq_\mathrm{inv}$, $\simeq_\mathrm{uni}$, $\simeq_\mathrm{sur}$, $\simeq_\mathrm{coi}$ and $\simeq_\mathrm{dr}$ coincide, and $f\simeq_\mathrm{inj}\tilde f$ iff $f\simeq\tilde f$ and $k(f)=k(\tilde f)$.
\end{corollary}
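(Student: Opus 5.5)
The plan is to assemble Corollary \ref{corollary-closed-range} from Proposition \ref{proposition-dense}(c), Theorem \ref{theorem-closed}, Theorem \ref{theorem-inv} and Theorem \ref{theorem-sur}, arranging the relations in a chain of inclusions that collapses once both ranges are closed. Let $f,\tilde f$ be families whose analysis operators $T_f,T_{\tilde f}$ both have closed range. Proposition \ref{proposition-dense}(c) already tells us that, on such pairs, $f\simeq_\mathrm{dr}\tilde f$, $f\simeq_\mathrm{inj}\tilde f$, $f\simeq_\mathrm{inv}\tilde f$, and the condition ``$f\simeq\tilde f$ and $k(f)=k(\tilde f)$'' are all equivalent. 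This also gives the final clause of the statement; alternatively it is Theorem \ref{theorem-inj}.

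The remaining relations are placed between $\simeq_\mathrm{inv}$ and $\simeq_\mathrm{inj}$ by the general inclusions. Theorem \ref{theorem-inv} gives $\simeq_\mathrm{uni}=\simeq_\mathrm{inv}$, and Theorem \ref{theorem-sur} gives $\simeq_\mathrm{sur}=\simeq_\mathrm{coi}=\simeq_\mathrm{inv}$, both unconditionally. Proposition \ref{proposition-inclusions} and Theorem \ref{theorem-closed} give
\[\simeq_\mathrm{inv}\subseteq\simeq_\mathrm{bb}=\simeq_\mathrm{iso}\subseteq\simeq_\mathrm{inj}.\]
On pairs with closed range the two ends of this chain coincide by the previous paragraph. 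So every intermediate relation agrees with them there: $\simeq_\mathrm{bb}$, $\simeq_\mathrm{iso}$, $\simeq_\mathrm{uni}$, $\simeq_\mathrm{sur}$ and $\simeq_\mathrm{coi}$, together with $\simeq_\mathrm{dr}$, which is handled directly by Proposition \ref{proposition-dense}(c).

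One point needs care: that the restriction ``both ranges closed'' is applied consistently. All of these statements are about a fixed pair $(f,\tilde f)$, so restricting to such pairs commutes with taking inclusions of relations, and no transitivity through a third family with nonclosed range is needed. There is no real obstacle here, since the substantive work is in Proposition \ref{proposition-dense}(c).
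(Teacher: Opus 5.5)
Your proposal is correct and is essentially the paper's proof: Proposition \ref{proposition-dense}(c) identifies $\simeq_\mathrm{dr}$, $\simeq_\mathrm{inj}$ and $\simeq_\mathrm{inv}$ on closed-range pairs, and the remaining relations are squeezed between $\simeq_\mathrm{inv}$ and $\simeq_\mathrm{inj}$ using the general inclusions and Theorem \ref{theorem-sur}. The only cosmetic difference is that you obtain $\simeq_\mathrm{iso}$ from $\simeq_\mathrm{bb}=\simeq_\mathrm{iso}$ (Theorem \ref{theorem-closed}) rather than from the sandwich $\simeq_\mathrm{uni}\subseteq\simeq_\mathrm{iso}\subseteq\simeq_\mathrm{bb}$ used in the paper, and both work equally well.
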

\begin{proof}
Proposition \ref{proposition-dense}(c) and the inclusions $\simeq_\mathrm{inv}\subseteq\simeq_\mathrm{bb}\subseteq\simeq_\mathrm{inj}$, $\simeq_\mathrm{uni}\subseteq\simeq_\mathrm{iso}\subseteq\simeq_\mathrm{bb}$, together with Theorem \ref{theorem-sur}.
\end{proof}

All the differences between these eight relations therefore involve a family whose analysis operator does not have closed range. For instance $\mathrm{id}_{\ell^2}$ and $\mathrm{id}_{\ell^2}\oplus\operatorname{diag}(1/n)$ are $\simeq_\mathrm{inj}$-equivalent but, by Proposition \ref{proposition-dense}(b), not $\simeq_\mathrm{dr}$-equivalent, so $\simeq_\mathrm{dr}$ is genuinely different from $\simeq_\mathrm{inj}$ and from $\simeq$; whether $\simeq_\mathrm{dr}$ is strictly coarser than $\simeq_\mathrm{inv}$ on families with nonclosed range is open (Problem \ref{problem-dr}).

\begin{example}[Running example, continued]\label{example-running-dense}
For the families of Example \ref{example-running}, both analysis operators have nonclosed range, and Proposition \ref{proposition-dense}(b) does not decide whether $f\simeq_\mathrm{dr}\tilde f$. This is the pair that Problem \ref{problem-dr} asks about.
\end{example}

\section{Equivalence to an orthonormal basis}
\label{section-onb}

Orthonormal bases are the natural reference: $S_e=\mathrm{id}_{\tilde H}$, $k(e)=0$ and $T_e$ is an isometry. We determine, for every relation considered in this article, which Bessel families are equivalent to an orthonormal basis. Recall that $f$ is a frame for $H$ if $T_f$ is bounded below, and that $\ker T_f$ is the orthogonal complement of the closed linear span of $f$.

\begin{theorem}\label{theorem-onb}
Let $e$ be an orthonormal basis of a separable Hilbert space $\tilde H\ne\{0\}$, $n=\dim\tilde H\in\{1,2,\dots,\infty\}$, and let $f$ be a Bessel family in $H$.
\begin{enumerate}[(a)]
\item $f\simeq e$ iff $\rk T_f=n$ when $n<\infty$, and iff $T_f$ is noncompact when $n=\infty$ (infinite rank alone is not enough: compact families of infinite rank are not equivalent to $e$).
\item $f\simeq_\mathrm{inj}e$ iff $f\simeq e$ and $\ker T_f=0$, that is, $f$ is complete in $H$.
\item $f\simeq_\mathrm{usF}e$ iff $f\simeq_\mathrm{F}e$ iff $f\simeq e$ and $T_f$ is upper semi-Fredholm, that is, $f$ is a frame for its closed linear span, which has finite codimension in $H$ and dimension $n$.
\item For $\mathcal O\in\{\mathcal O_\mathrm{bb},\mathcal O_\mathrm{iso},\mathcal O_\mathrm{inv},\mathcal O_\mathrm{uni},\mathcal O_\mathrm{sur},\mathcal O_\mathrm{coi},\mathcal O_\mathrm{dr}\}$ one has $f\simeq_{\mathcal O}e$ iff $f$ is a frame for $H$ and $\dim H=n$.
\end{enumerate}
\end{theorem}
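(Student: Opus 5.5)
The plan is to reduce every item to the classification theorems already proved, using only three facts about $e$: $S_e=\mathrm{id}_{\tilde H}$, $k(e)=0$, and $e$ is upper semi-Fredholm with closed range.

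For (a) I use Corollary \ref{corollary-classes}. If $n<\infty$, $e$ has rank $n$, and part (b) of that corollary gives $f\simeq e$ iff $\rk T_f=n$. If $n=\infty$, $e$ is noncompact, and part (d) says the noncompact families form one class. The converse follows from part (a) of the corollary: compactness is invariant. So a compact family of infinite rank is not equivalent to $e$.

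For (b), Theorem \ref{theorem-inj} adds exactly the condition $k(f)=k(e)=0$, that is, completeness.

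For (c), Theorem \ref{theorem-usF} gives: $f\simeq_\mathrm{usF}e$ iff $f\simeq e$, $k(f)<\infty$, and $T_f$ is upper semi-Fredholm. The middle condition is implied by the last, so it drops.

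For $\simeq_\mathrm F$ I apply Proposition \ref{proposition-fredholm}(b). Since $e$ is upper semi-Fredholm, any $f\simeq_\mathrm{usF}e$ is also upper semi-Fredholm. The pair $f,e$ is then in the "both upper semi-Fredholm" case, so $f\simeq_\mathrm F e$. The reverse inclusion $\simeq_\mathrm F\subseteq\simeq_\mathrm{usF}$ is part (a) of that proposition.

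The reformulation "frame for its closed span of finite codimension" is direct: $\ker T_f$ is the orthogonal complement of that span, and closed range means $T_f$ is bounded below there. The dimension of the span equals $\rk T_f$. By (a) this is $n$ in the finite case, and infinite in the noncompact case.

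For (d), the cases $\mathcal O_\mathrm{bb}$ and $\mathcal O_\mathrm{iso}$ are Proposition \ref{proposition-frames} together with Theorem \ref{theorem-closed}. For the remaining classes $\mathcal O_\mathrm{inv},\mathcal O_\mathrm{uni},\mathcal O_\mathrm{sur},\mathcal O_\mathrm{coi}$:
\begin{itemize}
\item Necessity follows because these relations are contained in $\simeq_\mathrm{bb}$, by Corollary \ref{corollary-inv}(a) and Theorem \ref{theorem-sur}.
\item Sufficiency: if $f$ is a frame with $\dim H=n$, choose a unitary $U:\tilde H\to H$. The estimate $a\le S_f\le\|S_f\|$ gives $a\,\mathrm{id}\le U^*S_fU\le\|S_f\|\,\mathrm{id}$. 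Theorem \ref{theorem-inv}(iii) then yields $\simeq_\mathrm{uni}$, hence all four relations.
\end{itemize}

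The step I expect to be the obstacle is $\mathcal O_\mathrm{dr}$. Sufficiency is free, since $\simeq_\mathrm{uni}\subseteq\simeq_\mathrm{dr}$.

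For necessity, suppose $f\simeq_\mathrm{dr}e$. Since $T_e$ has closed range, Proposition \ref{proposition-dense}(b) shows that $T_f$ has closed range. Proposition \ref{proposition-dense}(c) then upgrades the relation to $f\simeq_\mathrm{inv}e$, and we are reduced to the invertible case already treated.

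As a sanity check for $\dim H=n$: $f\simeq_\mathrm{bb}e$ gives injective maps in both directions, so the dimensions agree. The frame property follows from $k(f)=0$ together with closed range.
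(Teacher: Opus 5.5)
Your proof is correct and follows the paper's argument item by item: Corollary~\ref{corollary-classes} for (a), Theorem~\ref{theorem-inj} for (b), Theorem~\ref{theorem-usF} together with Proposition~\ref{proposition-fredholm}(b) for (c), and Proposition~\ref{proposition-frames}, the inclusion and coincidence results, and Proposition~\ref{proposition-dense}(b),(c) for (d). The differences are cosmetic. You obtain $\simeq_\mathrm{uni}$ directly from a unitary via Theorem~\ref{theorem-inv}(iii), where the paper uses the invertible pair $(S_f^{-1/2}U,U^{-1}S_f^{1/2})$. For the $\mathcal O_\mathrm{dr}$ necessity you pass through $\simeq_\mathrm{inv}$, where the paper passes through $\simeq_\mathrm{inj}$ followed by a direct bounded-below argument.
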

\begin{proof}
(a) For $n<\infty$ this is Corollary \ref{corollary-classes}(b), since $e$ has rank $n$; for $n=\infty$ it is Corollary \ref{corollary-classes}(d), since $T_e$ is noncompact.

(b) By Theorem \ref{theorem-inj} and $k(e)=0$.

(c) Since $T_e$ is an isometry, $e$ is upper semi-Fredholm with $k(e)=0$. By Theorem \ref{theorem-usF}, $f\simeq_\mathrm{usF}e$ iff $f\simeq e$, $k(f)<\infty$ and $T_f$ is upper semi-Fredholm, i.e.\ iff $f\simeq e$ and $T_f$ is upper semi-Fredholm. As $\simeq_\mathrm{F}\subseteq\simeq_\mathrm{usF}$, and $e$ is upper semi-Fredholm, Proposition \ref{proposition-fredholm}(b) gives $f\simeq_\mathrm{F}e$ in this case. Finally $T_f$ is upper semi-Fredholm iff $f$ is a frame for its closed span, which has finite codimension, and this span has dimension $\rk T_f=n$ by (a).

(d) By Proposition \ref{proposition-frames}, $f\simeq_\mathrm{bb}e$ iff $f$ is a frame and $\dim H=n$. In the proof of that proposition, when $f$ is a frame and $\dim H=n$, the operators $w=S^{-1/2}U$ and $\tilde w=U^{-1}S^{1/2}$ are invertible (here $S=S_f$ is invertible and $U:\tilde H\to H$ is unitary), so $f\simeq_\mathrm{inv}e$. Hence the conditions for $\simeq_\mathrm{bb}$ and $\simeq_\mathrm{inv}$ coincide, and by $\simeq_\mathrm{uni}\subseteq\simeq_\mathrm{iso}\subseteq\simeq_\mathrm{bb}$, Theorem \ref{theorem-inv}, Theorem \ref{theorem-closed} and Theorem \ref{theorem-sur}, so do those for $\simeq_\mathrm{iso}$, $\simeq_\mathrm{uni}$, $\simeq_\mathrm{sur}$ and $\simeq_\mathrm{coi}$. For $\simeq_\mathrm{dr}$: $\simeq_\mathrm{inv}\subseteq\simeq_\mathrm{dr}$; conversely if $f\simeq_\mathrm{dr}e$, then $T_f$ has closed range by Proposition \ref{proposition-dense}(b) (as $T_e$ does), so by Proposition \ref{proposition-dense}(c) $f\simeq_\mathrm{inj}e$, whence $k(f)=0$ and $T_f$ is injective with closed range, i.e.\ bounded below; by (a), $\rk T_f=n$ and $k(f)=0$ give $\dim H=n$.
\end{proof}

\begin{remark}\label{remark-onb}
Thus the passage from $\simeq$ to the finer relations measures how far a family is from an orthonormal basis in three successive steps: noncompactness (or rank $n$); completeness; and, at the finest level, being a frame of a space of the right dimension. Even the finest relations do not distinguish a redundant frame from an orthonormal basis, in contrast with the orbit relation ``$f=Te$ with $T$ invertible'' defining Riesz bases, which sees the family itself and not only its frame operator.
\end{remark}

\section{Applications}
\label{section-applications}

We now show what the coarse classes are good for. The three statements below hold with no assumption of frame, Riesz or completeness type: they apply to arbitrary Bessel families, which is the setting where the usual perturbation theory for frames is not available.

\subsection{Relative Paley--Wiener stability}

The classical Paley--Wiener and Christensen perturbation theorems compare the synthesis operators of two families and conclude that frame or Riesz properties persist. Here is the analysis-side counterpart; it keeps all the coarse invariants, not only the frame property.

\begin{proposition}\label{proposition-perturbation}
Let $f,g\in\mathcal B(\Omega,H)$ be Bessel families in the same space over the same measure space, and assume that for some $0\le\lambda<1$
\[\|T_gx-T_fx\|\le\lambda\|T_fx\|\qquad(x\in H).\]
Then:
\begin{enumerate}[(a)]
\item $f\simeq_\mathrm{uni}g$; in particular $f\simeq_\mathrm{inv}g$, $f\simeq_\mathrm{bb}g$ and $f\simeq g$;
\item $\ker T_f=\ker T_g$, so $k(f)=k(g)$; $T_f$ is compact, upper semi-Fredholm or of closed range iff $T_g$ is;
\item $(1-\lambda)^2\lambda_n(S_f)\le\lambda_n(S_g)\le(1+\lambda)^2\lambda_n(S_f)$ for all $n$, that is, $(1-\lambda)s_n(f)\le s_n(g)\le(1+\lambda)s_n(f)$;
\item $D(f,g)\le\log\frac{1+\lambda}{1-\lambda}$;
\item the multiplicity sequences $(d_j(f))_j$ and $(d_j(g))_j$ are related by a bijection of displacement at most $m=1+\lceil2\log_2\frac1{1-\lambda}\rceil$.
\end{enumerate}
\end{proposition}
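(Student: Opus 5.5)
The plan is to extract everything from the single two-sided inequality
\[(1-\lambda)\|T_fx\|\le\|T_gx\|\le(1+\lambda)\|T_fx\|\qquad(x\in H),\]
which follows from the hypothesis by the triangle inequality. In the language of Definition \ref{definition-relation} this says $\rho(g)\asymp\rho(f)$ with the identity of $H$ as comparison operator in both directions. Equivalently, by Proposition \ref{proposition-reduction}(a),
\[(1-\lambda)^2S_f\le S_g\le(1+\lambda)^2S_f .\]
Since the identity is unitary, (a) follows at once: $w=\tilde w=\mathrm{id}_H$ realizes $f\simeq_\mathrm{uni}g$. The remaining relations in (a) then follow from Proposition \ref{proposition-inclusions} and Theorem \ref{theorem-inv}.

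For (b), the two-sided inequality shows directly that $T_fx=0$ iff $T_gx=0$, because $1-\lambda>0$. Compactness is invariant under $\simeq$ by Corollary \ref{corollary-classes}(a). Upper semi-Fredholmness is invariant under $\simeq_\mathrm{bb}$ by Theorem \ref{theorem-closed}. Closedness of range can be read off directly: $T_f$ is bounded below on $(\ker T_f)^\perp=(\ker T_g)^\perp$ iff $T_g$ is, by the same inequality. Alternatively one could quote Proposition \ref{proposition-dense}(b), since the identity has dense range.

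For (c), I would apply the proof of Theorem \ref{theorem-distance}(c) with explicit constants. From $S_g\le(1+\lambda)^2S_f=w^*S_fw$ with $w=(1+\lambda)\,\mathrm{id}$, Proposition \ref{proposition-counting} with $K_1=1$ gives
\[N_{S_g}(\mu)\le N_{S_f}\bigl(\mu/(1+\lambda)^2\bigr).\]
The counting argument there then yields $\lambda_n(S_g)\le(1+\lambda)^2\lambda_n(S_f)$. The lower bound is symmetric, using $(1-\lambda)^2S_f\le S_g$. A more direct route is also available: the min--max characterization is monotone in the Loewner order, but the paper's $\lambda_n$ is defined through $N_S$. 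So I would stay with Proposition \ref{proposition-counting} and Lemma \ref{lemma-lambda}(a).

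Part (d) is then immediate from Theorem \ref{theorem-distance}(c), once the degenerate cases are handled. If $S_f=0$ then $S_g=0$ and $D=0$. Otherwise both suprema of ratios are bounded, $\sup s_n(g)/s_n(f)\le1+\lambda$ and $\sup s_n(f)/s_n(g)\le(1-\lambda)^{-1}$, so $D(f,g)\le\log\frac{1+\lambda}{1-\lambda}$. Terms with $s_n=0$ occur simultaneously for both families by (b) and the rank equality, so they contribute $0/0=0$.

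Part (e) is the only place needing care with constants. I would pass through Theorem \ref{theorem-inv}(iv)$\Rightarrow$(v)$\Rightarrow$(vi), with the Window Lemma \ref{lemma-window} applied to $w=\mathrm{id}$. Here $a=b=1$, $c=(1-\lambda)^2$ and $C=(1+\lambda)^2$, which gives
\[K=\max\{2/(1-\lambda)^2,\,2(1+\lambda)^2\}.\]
That yields some $m$, but the claimed value $m=1+\lceil2\log_2\frac1{1-\lambda}\rceil$ looks sharper than what the window lemma's factor $2$ produces. So I would instead argue directly with bands. For $x$ in a band $H_j(S_g)$, the inequality $S_f\ge(1+\lambda)^{-2}S_g\ge(1+\lambda)^{-2}2^{-j-1}$ holds. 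Running the dimension counts of Proposition \ref{proposition-counting} and Proposition \ref{proposition-necessary-cl} with the identity operator gives
\[W_{S_g}(\alpha,\beta)\le W_{S_f}\bigl((1-\lambda)^2\alpha/(1+\lambda)^2,\ (1+\lambda)^2\beta/(1-\lambda)^2\bigr)\]
type bounds, and hence $d[a,b]$ comparisons with a shift $m$ controlled by $\log_2$ of the constants. These bounds come from comparing $E(\alpha,\infty)$ and $E[0,\beta]$ dimensions separately, which are monotone under Loewner-comparable operators. I would then invoke Lemma \ref{lemma-matching}.

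The main obstacle is this step. Converting the separate upper and lower counts into window counts needs subtracting possibly infinite dimensions, and that is exactly where the window lemma was required. I would attempt the direct count first and, if infinite dimensions break it, fall back on Lemma \ref{lemma-window} and accept whatever $m$ it gives, adjusting the constant.
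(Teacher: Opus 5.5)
Parts (a)--(d) are correct and follow the paper's proof. The triangle inequality gives $(1-\lambda)^2S_f\le S_g\le(1+\lambda)^2S_f$, and the identity serves as the unitary comparison operator. Proposition \ref{proposition-counting} with Lemma \ref{lemma-lambda}(a) gives the eigenvalue bounds; your choice $w=(1+\lambda)\,\mathrm{id}$, $K_1=1$ is equivalent to the paper's $w=\mathrm{id}$, $K_1=(1+\lambda)^{-1}$. Theorem \ref{theorem-distance}(c) then gives the bound on $D$.

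The gap is in (e): you misjudge a constant and, as written, do not prove the stated value of $m$. Your $K=\max\{2(1-\lambda)^{-2},\,2(1+\lambda)^2\}$ simplifies to $K=2(1-\lambda)^{-2}$, because $(1+\lambda)^2(1-\lambda)^2=(1-\lambda^2)^2\le1$. The same value arises in both directions of the window lemma. The step (iv)$\Rightarrow$(v) of Theorem \ref{theorem-inv} only needs $2^m\ge K$, that is $m\ge1+2\log_2\frac1{1-\lambda}$. The stated $m=1+\lceil2\log_2\frac1{1-\lambda}\rceil$ is exactly the least such integer; the leading $1$ absorbs the factor $2$ of Lemma \ref{lemma-window}. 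So your fallback, with no adjustment of the constant, is precisely the paper's proof.

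The direct route you prefer does not work as described. A window can have finite multiplicity while both $N_S(\beta)$ and $M_S(\alpha)$ are infinite. For instance, $S=\mathrm{id}\oplus\operatorname{diag}(1/n)$ and $(\alpha,\beta]=(1/8,1/2]$ give $W_S(\alpha,\beta)=6$ while $N_S(1/2)=M_S(1/8)=\infty$. Separate counts of $E(\alpha,\infty)$ and $E[0,\beta]$ therefore cannot be subtracted, and controlling this leak is exactly what Lemma \ref{lemma-window} is for.

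Even granted, a window bound with your displayed factor $(1+\lambda)^2(1-\lambda)^{-2}$ would need $2^m\ge(1+\lambda)^2(1-\lambda)^{-2}$. At $\lambda=0.9$ this forces $m=9$ instead of the stated $m=8$, so it is not sharper than the window-lemma constant.
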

\begin{proof}
By the triangle inequality, $(1-\lambda)\rho(f)\le\rho(g)\le(1+\lambda)\rho(f)$, that is,
\[(1-\lambda)^2S_f\le S_g\le(1+\lambda)^2S_f .\]

(a) Take $w=\tilde w=\mathrm{id}_H$: the two inequalities above are the two conditions of Proposition \ref{proposition-reduction}(a), with constants $c=(1-\lambda)^2$, $C=(1+\lambda)^2$ in one direction and $c=(1+\lambda)^{-2}$, $C=(1-\lambda)^{-2}$ in the other; the identity is unitary.

(b) The kernels coincide, since $\rho(f)(x)=0$ iff $\rho(g)(x)=0$. The other statements follow from Corollary \ref{corollary-classes}(a) (compactness) and from the fact that $\ran S_f$ is closed iff $S_f$ is bounded below on $(\ker S_f)^\perp$, a property that survives the two inequalities.

(c) Apply Proposition \ref{proposition-counting} with $w=\mathrm{id}$: from $(1-\lambda)^2S_f\le S_g$ we get $N_{S_f}(\mu)\le N_{S_g}((1-\lambda)^2\mu)$, hence $(1-\lambda)^2\lambda_n(S_f)\le\lambda_n(S_g)$ by Lemma \ref{lemma-lambda}(a); from $(1+\lambda)^{-2}S_g\le S_f$ we get $N_{S_g}(\mu)\le N_{S_f}((1+\lambda)^{-2}\mu)$, hence $\lambda_n(S_g)\le(1+\lambda)^2\lambda_n(S_f)$.

(d) If $S_f=0$ then $S_g=0$ and $D=0$. Otherwise, by Theorem \ref{theorem-distance}(c) and (c) above, $D(f,g)=\log(\sup_ns_n(f)/s_n(g)\cdot\sup_ns_n(g)/s_n(f))\le\log\bigl((1-\lambda)^{-1}(1+\lambda)\bigr)$.

(e) Apply Lemma \ref{lemma-window} to $w=\mathrm{id}$ (so $a=b=1$) with the constants above in each direction: $K=\max\{1,2a/c,2bC\}=2(1-\lambda)^{-2}$ in both directions. With $m\in\NN$, $2^m\ge K$, Theorem \ref{theorem-inv}, (iv)$\Rightarrow$(v)$\Rightarrow$(vi), gives the bijection.
\end{proof}

\begin{remark}
If $f$ is a frame with bounds $A\le B$, the proposition recovers that $g$ is a frame with bounds $(1-\lambda)^2A\le(1+\lambda)^2B$. The new information is for families that are not frames: the whole spectral profile of $S_f$ up to the factor $1\pm\lambda$, and in particular the kernel dimension and the dyadic multiplicities, survive the perturbation. The condition is relative: perturbations of $T_f$ that are small compared to $\|T_fx\|$ for every $x$ are allowed, even for $x$ very close to $\ker T_f$, whereas an absolute condition $\|T_g-T_f\|\le\varepsilon$ cannot preserve the kernel or the closedness of the range.
\end{remark}

\subsection{Operator ideals and weighted sums}

The invariance of the Schatten classes under $\simeq$ (Corollary \ref{corollary-classes}(a)) has a concrete reading in terms of the norms of the vectors of the family, because the trace of the frame operator is $\int\|f(\omega)\|^2d\mu$.

\begin{corollary}\label{corollary-ideals}
Let $f\in\mathcal B(\Omega,H)$ and $\tilde f\in\mathcal B(\tilde\Omega,\tilde H)$ with $f\simeq\tilde f$, and let $C\ge1$ be such that $C^{-1}s_n(\tilde f)\le s_n(f)\le C\,s_n(\tilde f)$ for all $n$ (Theorem \ref{theorem-singular}). Then for every $p>0$
\[C^{-p}\sum_ns_n(\tilde f)^p\le\sum_ns_n(f)^p\le C^p\sum_ns_n(\tilde f)^p,\]
and in particular
\[C^{-2}\int_{\tilde\Omega}\|\tilde f(\omega)\|^2d\tilde\mu\le\int_\Omega\|f(\omega)\|^2d\mu\le C^2\int_{\tilde\Omega}\|\tilde f(\omega)\|^2d\tilde\mu .\]
Hence the Hilbert--Schmidt property of $T_f$, equivalently $\int\|f\|^2<\infty$ (for a sequence, $\sum_n\|f_n\|^2<\infty$), is an invariant of $\simeq$.
\end{corollary}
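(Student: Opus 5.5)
The first inequality follows by raising the termwise comparison of Theorem \ref{theorem-singular} to the $p$-th power and summing. For every $n$ we have $C^{-p}s_n(\tilde f)^p\le s_n(f)^p\le C^ps_n(\tilde f)^p$, since $t\mapsto t^p$ is increasing on $[0,\infty)$. Summing over $n\ge1$ then gives the two inequalities for the series. All terms are nonnegative, so the sums make sense in $[0,\infty]$ and no convergence assumption is needed. When the ranks are finite, the sums run over the same finitely many nonzero terms, because Corollary \ref{corollary-classes}(a) gives equality of the ranks.

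For the second inequality I would take $p=2$ and identify $\sum_ns_n(f)^2=\sum_n\lambda_n(S_f)$ with $\operatorname{tr}S_f$. The first step is to show $\operatorname{tr}S_f=\int_\Omega\|f(\omega)\|^2d\mu$. Fix an orthonormal basis $(e_i)$ of $H$ (countable by separability). Then
\[\operatorname{tr}S_f=\sum_i\langle S_fe_i,e_i\rangle=\sum_i\int_\Omega|\langle e_i,f(\omega)\rangle|^2d\mu=\int_\Omega\sum_i|\langle e_i,f(\omega)\rangle|^2d\mu=\int_\Omega\|f(\omega)\|^2d\mu,\]
using Lemma \ref{lemma-basic}(a), Tonelli for nonnegative terms, and Parseval pointwise in $\omega$. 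As a byproduct this shows that $\omega\mapsto\|f(\omega)\|^2$ is measurable, as a countable sum of measurable functions. The second step is to show that $\sum_i\langle S_fe_i,e_i\rangle=\sum_n\lambda_n(S_f)$ in $[0,\infty]$.

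I expect this second step to be the only real point. In the compact case it follows from Lemma \ref{lemma-compact}: choose the basis to consist of the eigenvectors $x_n$ together with a basis of $\ker S_f$, and use that the trace of a positive operator does not depend on the basis, since $\sum_i\|S_f^{1/2}e_i\|^2$ is the Hilbert--Schmidt norm squared of $S_f^{1/2}$. In the noncompact case both sides are infinite. Corollary \ref{corollary-classes}(d) gives $\lambda_n\ge\lambda_\infty>0$ for all $n$, so the eigenvalue sum diverges. On the other side, a positive operator of finite trace has Hilbert--Schmidt square root, hence is compact, so the basis sum is infinite too. Since compactness is invariant under $\simeq$ (Corollary \ref{corollary-classes}(a)), either both families are compact and the first inequality with $p=2$ applies, or both integrals are infinite and the statement holds trivially.

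The final assertion is now immediate. $T_f$ is Hilbert--Schmidt iff $\sum_ns_n(f)^2<\infty$ (Lemma \ref{lemma-basic}(b)), iff $\int\|f\|^2d\mu<\infty$ by the identity above. By the displayed inequality this finiteness holds for $f$ iff it holds for $\tilde f$. For $\Omega=\NN$ with counting measure, the integral is $\sum_n\|f_n\|^2$.
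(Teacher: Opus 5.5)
Your proposal is correct and follows the paper's proof. Both take the termwise comparison from Theorem \ref{theorem-singular}, derive $\operatorname{tr}S_f=\int_\Omega\|f(\omega)\|^2d\mu$ via Tonelli and Parseval, and split into the compact case (eigenbasis from Lemma \ref{lemma-compact}) and the noncompact case, where both sides are infinite. Your observation that a finite trace makes $S_f^{1/2}$ Hilbert--Schmidt, hence $S_f$ compact, fills in why $\operatorname{tr}S_f=\infty$ in the noncompact case, which the paper only asserts.
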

\begin{proof}
The first inequality is Theorem \ref{theorem-singular}. For the second, let $(e_i)$ be an orthonormal basis of $H$. By Tonelli, $\sum_i\langle S_fe_i,e_i\rangle=\sum_i\int_\Omega|\langle e_i,f(\omega)\rangle|^2d\mu=\int_\Omega\|f(\omega)\|^2d\mu$, that is, $\operatorname{tr}S_f=\int\|f\|^2$. If $T_f$ is compact, $\operatorname{tr}S_f=\sum_n\lambda_n(S_f)=\sum_ns_n(f)^2$ by Lemma \ref{lemma-compact}; if $T_f$ is not compact, then $\lambda_n(S_f)\ge\lambda_\infty(f)>0$ for all $n$ (proof of Corollary \ref{corollary-classes}(d)) and both sides are infinite. The case $p=2$ of the first inequality is the second.
\end{proof}

For instance, a family over an uncountable index set in an unfamiliar space can be certified Hilbert--Schmidt by exhibiting any equivalent family, say a diagonal one $\operatorname{diag}(s_n^2)$ (Lemma \ref{lemma-basic}(c)), whose norms are known; and the constant $C$ is the one controlling the factor in the ratio of the two norm integrals.

\subsection{Similarity invariance}

For $w\in\mathrm{GL}(H)$ and a family $f\in\mathcal B(\Omega,H)$ write $w^*f$ for the family $\omega\mapsto w^*f(\omega)$. Then $\langle x,w^*f(\omega)\rangle=\langle wx,f(\omega)\rangle$, so $T_{w^*f}=T_fw$ is bounded: $w^*f$ is Bessel, $S_{w^*f}=w^*S_fw$ and $\rho(w^*f)=\rho(f)\circ w$. This is the action of $\mathrm{GL}(H)$ on families considered in \cite{Balan1999}.

\begin{proposition}\label{proposition-similarity}
Let $f\in\mathcal B(\Omega,H)$ and $w\in\mathrm{GL}(H)$. Then $(w,w^{-1})$ is an exact pair for $(f,w^*f)$, so $f\simeq_\mathrm{inv}w^*f$ and $D(f,w^*f)\le\log(\|w\|\|w^{-1}\|)$. If $w$ is unitary, $f\simeq_\mathrm{uni}w^*f$ and $D(f,w^*f)=0$. In particular $k(f)$ and the sequence $(d_j(f))_j$ up to bounded displacement are constant on $\mathrm{GL}(H)$-orbits, whereas the spectral measure of $S_f$ is not.
\end{proposition}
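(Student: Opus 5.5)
The plan is to verify the exactness of the pair $(w,w^{-1})$ directly and then read everything else off from earlier results. Write $g=w^*f$. As computed just before the statement, $S_g=w^*S_fw$, so $w:H\to H$ satisfies $w^*S_fw=S_g$, which is the first exactness condition with $w$ playing the role of the comparison operator $\tilde H=H\to H$. For the second condition, use $\tilde w=w^{-1}$ and compute $(w^{-1})^*S_g\,w^{-1}=(w^{-1})^*w^*S_fw\,w^{-1}=(ww^{-1})^*S_f=S_f$. So $(w,w^{-1})$ is exact for $(f,w^*f)$ in the sense of Definition \ref{definition-exact}. Both operators are invertible, so by Proposition \ref{proposition-reduction}(a), with $c=C=1$, we get $f\simeq_\mathrm{inv}w^*f$.

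For the distance, $D(f,w^*f)\le\log\max(1,\|w\|\|w^{-1}\|)=\log(\|w\|\|w^{-1}\|)$, because $\|w\|\|w^{-1}\|\ge\|ww^{-1}\|=1$. If $w$ is unitary, then $w$ and $w^{-1}=w^*$ are unitary, so the same pair realizes $\simeq_\mathrm{uni}$. Moreover $\|w\|\|w^{-1}\|=1$, so $D=0$ by nonnegativity of $D$.

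The invariance statement follows from Theorem \ref{theorem-inv}, via (i)$\Rightarrow$(vi). It shows that $f\simeq_\mathrm{inv}w^*f$ forces $k(f)=k(w^*f)$, and that $(d_j(f))$ and $(d_j(w^*f))$ are related by a bijection of bounded displacement.

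It remains to show that the spectral measure is not an invariant of the orbit. Here I will give an explicit example: $H=\FF^2$, $S_f=\mathrm{id}$, and $w=\operatorname{diag}(1,2)$. Then $S_{w^*f}=\operatorname{diag}(1,4)$, whose spectrum $\{1,4\}$ differs from $\{1\}$, so the spectral measures differ. In infinite dimension, the analogous example is $S_f=\mathrm{id}$ and $w=2\,\mathrm{id}$. There is no real obstacle in this argument. The only point needing care is the order of the adjoints in $(w^{-1})^*w^*=(ww^{-1})^*$.
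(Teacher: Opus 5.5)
Your proof is correct and follows the paper's argument. You obtain exactness of $(w,w^{-1})$ from $S_{w^*f}=w^*S_fw$, which the paper phrases as $\rho(f)\circ w=\rho(w^*f)$ and $\rho(w^*f)\circ w^{-1}=\rho(f)$; you read the bound on $D$ off the definition, get the invariants from Theorem \ref{theorem-inv}, and show the spectral measure is not invariant by taking $S_f=\mathrm{id}$, so that $S_{w^*f}=w^*w$ (your $\operatorname{diag}(1,4)$ and $4\,\mathrm{id}$ are special cases of the paper's arbitrary invertible positive $w^*w$ for an orthonormal basis).
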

\begin{proof}
$\rho(f)\circ w=\rho(w^*f)$ and $\rho(w^*f)\circ w^{-1}=\rho(f)$, so $(w,w^{-1})$ is exact, and $w,w^{-1}$ are invertible, unitary if $w$ is. The statement on $D$ is the definition. For the last claim, for an orthonormal basis $e$ of an infinite-dimensional space and $w\in\mathrm{GL}(H)$ one has $S_{w^*e}=w^*w$, an arbitrary invertible positive operator, so the spectral measure is not an invariant of the orbit of $e$; yet $e$ and $w^*e$ remain $\simeq_\mathrm{inv}$-equivalent (indeed, by Theorem \ref{theorem-onb}(d), every frame is).
\end{proof}

\section{Complements}
\label{section-complements}

\subsection{Relation with comparison categories}
\label{section-categories}

We explain in which sense the relations $\simeq_{\mathcal O}$ are instances of the construction of \cite{ElIdrissiCC}, and we compute the other levels of that construction. We use the terminology of that article: a comparison datum has a comparison category $\mathsf K$, whose connected components give level $0$, whose classifying space gives the homotopy groups of the levels $k\ge1$, and for which level $-1$ is the existence of comparisons in both directions \cite[Definition 3.3 and Theorem 3.10]{ElIdrissiCC}. Level $-1$ is an equivalence relation \cite[Theorem 3.10(a)]{ElIdrissiCC}. Let $\Hilb$ be the category of separable Hilbert spaces over $\FF$ and bounded operators; since nerves are taken, we restrict it, without loss of generality, to the small full subcategory of the spaces $\ell^2(I)$, $I\subseteq\NN$ (including $\{0\}$).

The relevant system of observations consists of the bounded seminorms.

\begin{definition}\label{definition-sem}
A seminorm $\phi$ on a Hilbert space $H$ is \textbf{bounded} if $\phi(x)\le K\|x\|$ for some $K$. Let $\Sem(H)$ be the set of bounded seminorms with the preorder $\phi\sqsubseteq\psi$ iff $\phi\le K\psi$ for some constant $K>0$; thus $\phi\asymp\psi$ is $\phi\sqsubseteq\psi\sqsubseteq\phi$. For a bounded operator $w:\tilde H\to H$ the pullback $\phi\circ w$ is again a bounded seminorm (as $\phi(wy)\le K\|w\|\|y\|$), and $\phi\sqsubseteq\psi$ implies $\phi\circ w\sqsubseteq\psi\circ w$. Since $(\phi\circ w)\circ w'=\phi\circ(ww')$, the assignment $H\mapsto\Sem(H)$, $w\mapsto(\phi\mapsto\phi\circ w)$ is a contravariant functor $\Hilb^\mathrm{op}\to\Cat$ with values in preorders (thin categories). Together with the class $\mathcal W_H=\{\asymp\}$ of the arrows $\phi\to\psi$ with $\phi\sqsubseteq\psi\sqsubseteq\phi$, which contains the identities, is closed under composition and is stable under pullback, it is a system of observations in the sense of \cite[Definition 5.2]{ElIdrissiCC}.
\end{definition}

\begin{proposition}\label{proposition-level-minus}
Let $\mathcal O$ be a wide subcategory of $\Hilb$ (for instance one of the classes of Definitions \ref{definition-classes} and \ref{definition-further}) and let $\mathfrak D=\mathfrak D^{\mathcal O}_{\Sem}$ be the transport datum of the system of observations $\Sem$ for $\mathcal U=\mathcal O$, as in \cite[Theorem 5.5]{ElIdrissiCC}. Then for Bessel families $f,\tilde f$ the following are equivalent:
\begin{enumerate}[(i)]
\item $f\simeq_{\mathcal O}\tilde f$;
\item $\rho(f)\in\Sem(H)$ and $\rho(\tilde f)\in\Sem(\tilde H)$ are equal at level $-1$ of $\mathfrak D$, that is, there are arrows $\rho(f)\to\rho(\tilde f)$ and $\rho(\tilde f)\to\rho(f)$ in the comparison category $\mathsf K(\mathfrak D)$.
\end{enumerate}
Thus $\simeq_{\mathcal O}$ is the pullback of level $-1$ of $\mathfrak D$ along the map $f\mapsto\rho(f)$.
\end{proposition}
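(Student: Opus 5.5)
The plan is to unwind the definitions of \cite{ElIdrissiCC} for this particular transport datum and check that an arrow in $\mathsf K(\mathfrak D)$ between two objects is exactly one half of the condition in Definition \ref{definition-relation}. By \cite[Theorem 5.5]{ElIdrissiCC}, the objects of $\mathsf K(\mathfrak D)$ are pairs $(H,\phi)$ with $\phi\in\Sem(H)$. An arrow $(\tilde H,\psi)\to(H,\phi)$ should be given by an operator $w:\tilde H\to H$ in $\mathcal U=\mathcal O$ together with an arrow in $\mathcal W_{\tilde H}$ between $\phi\circ w$ and $\psi$. Since $\mathcal W_{\tilde H}$ consists of the arrows of the thin category $\Sem(\tilde H)$ that are mutual dominations, such an arrow exists iff $\phi\circ w\asymp\psi$, and it is then unique. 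I will first record this description of the hom-sets, being careful about the variance: which of the two objects is pulled back is fixed by the contravariance of $\Sem$ and the orientation convention in \cite{ElIdrissiCC}. I will also note that the choice of orientation is immaterial here, because level $-1$ asks for arrows in both directions.

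The comparison step comes next. Take $\phi=\rho(f)$ and $\psi=\rho(\tilde f)$; these are bounded seminorms by Lemma \ref{lemma-basic}(a), so they are objects of $\mathsf K(\mathfrak D)$. An arrow $\rho(f)\to\rho(\tilde f)$ (or in the reverse orientation) then amounts to some $w:\tilde H\to H$ in $\mathcal O$ with $\rho(f)\circ w\asymp\rho(\tilde f)$. An arrow the other way amounts to some $\tilde w:H\to\tilde H$ in $\mathcal O$ with $\rho(\tilde f)\circ\tilde w\asymp\rho(f)$. The conjunction of the two is verbatim Definition \ref{definition-relation}, which gives (i)$\iff$(ii). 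The final sentence of the statement is then a restatement: $\simeq_{\mathcal O}$ is the preimage of the level $-1$ relation under $f\mapsto\rho(f)$.

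Two technical points need care. The first is the restriction of $\Hilb$ to the spaces $\ell^2(I)$. Every separable space is unitarily isomorphic to some $\ell^2(I)$, and if $\mathcal O$ contains the relevant unitaries, transporting $f$ along a unitary $u$ does not change its class: $\rho(f)\circ u\asymp\rho(u^*f)$ with $u,u^{-1}\in\mathcal O$. Otherwise one simply reads $H,\tilde H$ as objects of the small subcategory; I would say this explicitly. The second is that $\mathcal W$ must be stable under pullback so that composition in $\mathsf K$ is defined; this was already checked in Definition \ref{definition-sem}.

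I expect the main obstacle to be the faithful matching of the abstract definition of morphisms in the transport datum (which side carries the observation arrow, and whether it must lie in $\mathcal W$ or merely in $\Sem$) with the concrete condition. If \cite{ElIdrissiCC} requires only a $\sqsubseteq$-arrow in the hom-set, level $-1$ would give one-sided domination in each direction, and the equivalence would fail. So the step to verify first is that the datum built from $(\Sem,\mathcal W)$ uses $\mathcal W$-arrows, i.e.\ $\asymp$, as the comparison data.
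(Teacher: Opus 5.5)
Your proposal is correct and follows the paper's proof. Both use \cite[Theorem 5.5]{ElIdrissiCC} to identify an arrow $\rho(f)\to\rho(\tilde f)$ with a pair $(w,\varphi)$, where $w\in\mathcal O$ and $\varphi$ is a $\mathcal W$-arrow, i.e.\ $\rho(f)\circ w\asymp\rho(\tilde f)$, and then read off Definition~\ref{definition-relation}; the paper takes arrows $(H,\phi)\to(\tilde H,\psi)$ with $w:\tilde H\to H$ and comparison in $\mathcal W_{\tilde H}$, which settles the variance question you raised (and, as you note, level $-1$ asks for arrows both ways, so the orientation is irrelevant).
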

\begin{proof}
By \cite[Theorem 5.5]{ElIdrissiCC}, the full subcategory of $\mathsf K(\mathfrak D)$ on the observations is isomorphic to the Grothendieck category of $\Sem$ over $\mathcal O$, whose arrows $(H,\phi)\to(\tilde H,\psi)$ are the pairs $(w,\varphi)$ with $w:\tilde H\to H$ in $\mathcal O$ and $\varphi:\phi\circ w\to\psi$ an arrow of $\mathcal W_{\tilde H}=\{\asymp\}$, that is, an equivalence $\phi\circ w\asymp\psi$. So an arrow $\rho(f)\to\rho(\tilde f)$ exists iff there is $w\in\mathcal O$ with $\rho(f)\circ w\asymp\rho(\tilde f)$, and an arrow $\rho(\tilde f)\to\rho(f)$ exists iff there is $\tilde w\in\mathcal O$ with $\rho(\tilde f)\circ\tilde w\asymp\rho(f)$; this is Definition \ref{definition-relation}.
\end{proof}

In particular Proposition \ref{proposition-reduction}(b) is also a consequence of the fact that level $-1$ of a comparison category is an equivalence relation \cite[Theorem 3.10(a)]{ElIdrissiCC}; here it was proved directly. The levels $0$ and $k\ge1$ are computed as follows. The observation $0_0$ below is the zero seminorm on the zero space, that is, the family $f:\Omega\to\{0\}$.

\begin{proposition}\label{proposition-levels}
Let $\mathsf K=\mathsf K(\mathfrak D^{\mathcal O}_\Sem)$ and let $\mathsf K_\mathrm{obs}$ be its full subcategory on the observations, which is a union of connected components of $\mathsf K$ \cite[Theorem 5.5]{ElIdrissiCC}.
\begin{enumerate}[(a)]
\item If $\mathcal O$ contains the operators $\{0\}\to H$ for every $H$ (this holds for $\mathcal O_\mathrm{all}$, $\mathcal O_\mathrm{inj}$, $\mathcal O_\mathrm{bb}$, $\mathcal O_\mathrm{iso}$ and $\mathcal O_\mathrm{usF}$), then $0_0$ is a terminal object of $\mathsf K_\mathrm{obs}$. Consequently level $0$ is the trivial relation (all families lie in one component) and all levels $k\ge1$ vanish: for these classes the whole information of the comparison category is carried by level $-1$, which is the classification of Sections \ref{section-all}--\ref{section-further}.
\item For $\mathcal O=\mathcal O_\mathrm{inv}$ and $\mathcal O=\mathcal O_\mathrm{uni}$ the category $\mathsf K_\mathrm{obs}$ is a groupoid. Hence level $0$ equals level $-1$, the levels $k\ge2$ vanish, and level $1$ at $f\in\mathcal B(\Omega,H)$ is the group
\begin{align*}G_{\mathcal O}(f)&=\{w\in\mathcal O(H,H):\rho(f)\circ w\asymp\rho(f)\}\\&=\{w\in\mathcal O(H,H):\exists\,c,C>0,\ c\,S_f\le w^*S_fw\le C\,S_f\},\end{align*}
up to the isomorphism $w\mapsto w^{-1}$ with the opposite group.
\item If $f$ is a frame, $G_\mathrm{inv}(f)=\mathrm{GL}(H)$ and $G_\mathrm{uni}(f)=\mathrm U(H)$. If $\dim H=n<\infty$ and $k=k(f)$, then $G_\mathrm{inv}(f)$ is the parabolic subgroup $\{w\in\mathrm{GL}_n(\FF):w(\ker T_f)=\ker T_f\}$ and $G_\mathrm{uni}(f)\cong\mathrm U(k)\times\mathrm U(n-k)$ (orthogonal groups over $\RR$); in particular level $1$ depends on $f$ only through $k(f)$.
\end{enumerate}
\end{proposition}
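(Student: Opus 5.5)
The three parts rest on different facts: (a) on a terminal object, (b) on the groupoid property, (c) on an explicit description of the stabilizer. Throughout I use the description of $\mathsf K_\mathrm{obs}$ from the proof of Proposition \ref{proposition-level-minus}. An arrow $(H,\phi)\to(\tilde H,\psi)$ is a pair $(w,\varphi)$ with $w:\tilde H\to H$ in $\mathcal O$ and $\phi\circ w\asymp\psi$. Since each $\Sem(\tilde H)$ is thin, such an arrow is determined by $w$ alone.

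\emph{Part (a).} Take any observation $(H,\phi)$. The unique operator $0:\{0\}\to H$ lies in $\mathcal O$ by hypothesis. The pullback $\phi\circ0$ is the zero seminorm on $\{0\}$, which equals $0_0$. So there is exactly one arrow $(H,\phi)\to0_0$, and $0_0$ is terminal in $\mathsf K_\mathrm{obs}$.

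I still need to check that the five listed classes contain these operators. The operator $\{0\}\to H$ is vacuously injective, bounded below and isometric. It is also upper semi-Fredholm, since its kernel is $0$ and its range is closed. It is not Fredholm when $\dim H=\infty$, which is why $\mathcal O_\mathrm{F}$ is absent from the list.

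A category with a terminal object is connected. Its nerve is contractible, because the functor to the terminal category has a right adjoint and adjunctions induce homotopy equivalences of nerves. Since $\mathsf K_\mathrm{obs}$ is a union of components of $\mathsf K$, level $0$ is trivial and all homotopy groups of positive degree vanish, as computed by \cite[Theorem 3.10]{ElIdrissiCC}.

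\emph{Part (b).} Take an arrow given by an invertible (or unitary) $w$ with $\phi\circ w\asymp\psi$. Pulling back along $w^{-1}$ preserves $\asymp$, so $\psi\circ w^{-1}\asymp\phi$. This gives an arrow in the opposite direction, and thinness forces it to be a two-sided inverse. Hence $\mathsf K_\mathrm{obs}$ is a groupoid.

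For a groupoid, the existence of arrows in both directions is the same as the existence of one arrow, so level $0$ equals level $-1$. The nerve of a groupoid is homotopy equivalent to a disjoint union of classifying spaces $B\,\mathrm{Aut}(x)$. These are $K(G,1)$ spaces, so $\pi_1$ is the automorphism group and the higher homotopy groups vanish. The automorphism group of $\rho(f)$ consists of the $w\in\mathcal O(H,H)$ with $\rho(f)\circ w\asymp\rho(f)$. Composition of arrows corresponds to $w_1w_2$ taken in the reversed order, because pullback is contravariant; this is the reason for the opposite group and the isomorphism $w\mapsto w^{-1}$. The description by frame operators follows from Proposition \ref{proposition-reduction}(a).

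\emph{Part (c).} If $f$ is a frame, then $aI\le S_f\le bI$, and every invertible $w$ satisfies $w^*S_fw\asymp w^*w\asymp I\asymp S_f$. Hence $G_\mathrm{inv}(f)=\mathrm{GL}(H)$ and $G_\mathrm{uni}(f)=\mathrm U(H)$.

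In finite dimension every positive $S$ satisfies $\delta P\le S\le\|S\|P$, where $P$ is the projection onto $(\ker S)^\perp$ and $\delta$ is the least positive eigenvalue. Therefore $w^*Sw\asymp S$ is equivalent to $w^*Pw\asymp P$. Comparing the kernels of both sides, this is equivalent to $\ker(Pw)=\ker P$, that is, $w^{-1}(\ker S)=\ker S$. By Lemma \ref{lemma-kernel} the latter is the same as $w(\ker S)=\ker S$.

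Conversely, suppose $w(\ker S)=\ker S$. Then $Pw$ vanishes on $\ker S$ and is injective on $(\ker S)^\perp$. By compactness of the unit sphere, $\|Pwy\|\asymp\|Py\|$, which gives $w^*Pw\asymp P$.

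This is the parabolic subgroup stabilizing a $k$-dimensional subspace. A unitary that preserves $\ker S$ also preserves its orthogonal complement, which gives $\mathrm U(k)\times\mathrm U(n-k)$. In particular the group depends on $f$ only through $k$.

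I expect the main obstacle to be part (b): identifying the levels $k\ge1$ with the homotopy groups of the nerve requires the conventions of \cite{ElIdrissiCC}, including the choice of basepoint and the passage to the opposite group.
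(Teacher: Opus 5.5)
Your proposal is correct and follows essentially the paper's proof: a terminal object $0_0$ for (a), inverting arrows via $w^{-1}$ and thinness for (b), and equivalence of norms in finite dimension for (c). Your reduction to $w^*Pw\asymp P$ replaces the paper's passage to the quotient $H/\ker S_f$ but rests on the same idea. One small correction: the step $w^{-1}(\ker S)=\ker S\iff w(\ker S)=\ker S$ holds simply because $w$ is bijective, not by Lemma \ref{lemma-kernel}; the paper uses that lemma instead to obtain $w^{-1}(\ker S_f)=\ker S_f$ directly from $w^*S_fw\asymp S_f$.
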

\begin{proof}
(a) The arrows $\rho(f)\to0_0$ in $\mathsf K_\mathrm{obs}$ are the pairs $(w,\varphi)$ with $w\in\mathcal O(\{0\},H)$ and $\varphi:\rho(f)\circ w\to0$ in $\mathcal W_{\{0\}}$. By hypothesis the zero operator $w:\{0\}\to H$ is in $\mathcal O$; it is the only operator out of the zero space; and $\rho(f)\circ w=0$ is the only seminorm on $\{0\}$, so the arrow $\varphi$ is unique ($\Sem(\{0\})$ is a point). Hence $0_0$ is terminal. Every object of $\mathsf K_\mathrm{obs}$ maps to $0_0$, so $\mathsf K_\mathrm{obs}$ is connected, and a category with a terminal object has a contractible classifying space \cite[Proposition 3.9]{ElIdrissiCC}; hence level $0$ is trivial and the levels $k\ge1$ vanish \cite[Theorem 3.10(d)]{ElIdrissiCC}.

(b) If $(w,\varphi):\rho(f)\to\rho(\tilde f)$ is an arrow with $w$ invertible (unitary), then pulling back $\rho(f)\circ w\asymp\rho(\tilde f)$ along $w^{-1}$ gives $\rho(f)\asymp\rho(\tilde f)\circ w^{-1}$, so $(w^{-1},\varphi')$ is an arrow in the other direction with $\varphi'$ unique ($\Sem$ is thin), and it is the inverse of $(w,\varphi)$. So $\mathsf K_\mathrm{obs}$ is a groupoid, hence level $0$ equals level $-1$ \cite[Proposition 3.5(d)]{ElIdrissiCC}, and by \cite[Proposition 3.8(b)]{ElIdrissiCC} level $1$ at $\rho(f)$ is the automorphism group, whose elements are the arrows $(w,\varphi):\rho(f)\to\rho(f)$, that is, the $w\in\mathcal O(H,H)$ with $\rho(f)\circ w\asymp\rho(f)$ ($\varphi$ being unique); the composition $(w',\varphi')(w,\varphi)=(ww',\dots)$ reverses the order, whence the opposite group. By Proposition \ref{proposition-reduction}(a), $\rho(f)\circ w\asymp\rho(f)$ means $c\,S_f\le w^*S_fw\le C\,S_f$. The vanishing of the levels $k\ge2$ holds because the classifying space of a groupoid is a disjoint union of the spaces $\mathbf B\mathrm{Aut}(x)$ (proof of \cite[Proposition 3.8(b)]{ElIdrissiCC}).

(c) If $f$ is a frame, $S_f\asymp\mathrm{id}$, and for invertible $w$, $w^*S_fw\asymp w^*w\asymp\mathrm{id}\asymp S_f$. Let $\dim H=n<\infty$ and $w\in\mathrm{GL}_n(\FF)$ with $w^*S_fw\asymp S_f$. By Lemma \ref{lemma-kernel}, $w^{-1}(\ker S_f)=\ker S_f$, so $w(\ker T_f)=\ker T_f$. Conversely, if $w$ preserves $\ker S_f$ it induces an invertible map $\bar w$ of $H/\ker S_f$, on which $\|S_f^{1/2}\cdot\|$ is a norm; all norms on a finite-dimensional space are equivalent, so $\|S_f^{1/2}\bar w\bar y\|\asymp\|S_f^{1/2}\bar y\|$, that is, $\rho(f)\circ w\asymp\rho(f)$. For unitary $u$ the same argument shows that $u$ preserves $\ker S_f$ and hence $(\ker S_f)^\perp$, so $G_\mathrm{uni}(f)=\mathrm U(\ker S_f)\times\mathrm U((\ker S_f)^\perp)$.
\end{proof}

Parts (b) and (c) are the analogue of the stabilizer of Green data under invertible comparison morphisms \cite[Proposition 6.11 and Example 11.4]{ElIdrissiCC}: for finite-dimensional spaces it is again a parabolic subgroup, depending only on $k(f)$ (for fixed $\dim H$). The remaining classes $\mathcal O_\mathrm{F}$, $\mathcal O_\mathrm{dr}$, $\mathcal O_\mathrm{sur}$, $\mathcal O_\mathrm{coi}$ contain no operator out of the zero space except in trivial cases and are not groupoids; their levels $0$ and $k\ge1$ are open (Problem \ref{problem-levels}). For $\mathcal O_\mathrm{inv}$ the classifying space of the groupoid $\mathsf K_\mathrm{obs}$ is the disjoint union $\bigsqcup_{[f]}BG_\mathrm{inv}(f)$, over the $\simeq_\mathrm{inv}$-classes, of the classifying spaces of the discrete groups $G_\mathrm{inv}(f)$ (proof of \cite[Proposition 3.8(b)]{ElIdrissiCC}).

\begin{example}[Running example, continued]\label{example-running-levels}
For the families of Example \ref{example-running} the relations $\simeq_\mathrm{bb}$ and $\simeq$ are level $-1$ of a comparison category with a terminal object, so there is nothing more to compute at levels $0$ and $k\ge1$. For $\mathcal O_\mathrm{inv}$ the two families lie in different components, and level $1$ at $f$ is $G_\mathrm{inv}(f)=\{w\in\mathrm{GL}(H):w^*S_1w\asymp S_1\}$, which is strictly smaller than $\mathrm{GL}(H)$ because $S_1$ is not bounded below: if $(b_k)$ and $(e_k)$ are the standard bases of the two summands of $H=\ell^2\oplus\ell^2$, the unitary $u$ exchanging $b_k$ and $e_k$ for all $k$ gives $\langle S_1ub_k,ub_k\rangle=1/k$ while $\langle S_1b_k,b_k\rangle=1$, so $u^*S_1u\not\ge c\,S_1$ and $u\notin G_\mathrm{inv}(f)$.
\end{example}

\subsection{Operators between Banach spaces}

The definition of $\simeq$ makes sense for bounded operators between Banach spaces. What is lost compared with Hilbert spaces is the classification: there is no spectral decomposition.

\begin{definition}\label{definition-banach}
For a bounded operator $T:X\to Y$ between separable Banach spaces let $\rho(T)(x)=\|Tx\|$; its pullback along a bounded operator $w:\tilde X\to X$ is $\rho(T)\circ w=\rho(Tw)$. For bounded $T:X\to Y$ and $\tilde T:\tilde X\to\tilde Y$ put $T\simeq\tilde T$ iff there are bounded $w:\tilde X\to X$ and $\tilde w:X\to\tilde X$ with $\rho(Tw)\asymp\rho(\tilde T)$ and $\rho(\tilde T\tilde w)\asymp\rho(T)$. The rank of $T$ is the dimension of $\ran T$ as a vector space.
\end{definition}

Two basic properties of $\simeq$ survive in this generality.

\begin{proposition}\label{proposition-banach}
The relation $\simeq$ of Definition \ref{definition-banach} is an equivalence relation. If $T\simeq\tilde T$ then $T$ and $\tilde T$ have the same rank, and $T$ is compact iff $\tilde T$ is.
\end{proposition}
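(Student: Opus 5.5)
The plan is to argue directly with the seminorms, copying the Hilbert space arguments but replacing every use of the spectral theorem by elementary properties of $\asymp$ and of compositions.

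\emph{Equivalence relation.} This goes exactly as in Proposition \ref{proposition-reduction}(b). Reflexivity uses $w=\tilde w=\mathrm{id}$, and symmetry is built into the definition. For transitivity, take $v,\tilde v$ realizing $\tilde T\simeq\hat T$. Pulling back along a bounded operator preserves $\asymp$, so
\[\rho(Twv)=\rho(Tw)\circ v\asymp\rho(\tilde T)\circ v=\rho(\tilde Tv)\asymp\rho(\hat T),\]
and symmetrically $\rho(\hat T\tilde v\tilde w)\asymp\rho(T)$.

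\emph{Rank.} Suppose $\rho(Tw)\asymp\rho(\tilde T)$. Then $\ker(Tw)=\ker\tilde T$. Hence the map $\tilde Tx\mapsto Twx$ is well defined and linear from $\ran\tilde T$ onto $\ran(Tw)\subseteq\ran T$. It is injective, so $\rk\tilde T\le\rk(Tw)\le\rk T$. The other half of the hypothesis gives the reverse inequality. Ranks are vector-space dimensions, so they are compared as cardinals, and these two inequalities suffice.

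\emph{Compactness.} Assume $T$ is compact, and let $(x_n)$ be a bounded sequence in $\tilde X$. Then $(wx_n)$ is bounded, so a subsequence has $(Twx_{n})$ Cauchy. By the estimate $\|\tilde T(x_n-x_m)\|\le c^{-1}\|Tw(x_n-x_m)\|$, the sequence $(\tilde Tx_n)$ is Cauchy along the same subsequence, and it converges because $\tilde Y$ is complete. Therefore $\tilde T$ is compact, and the converse follows by symmetry using $\tilde w$. An equivalent way to phrase this step is that the map $\tilde Tx\mapsto Twx$ extends to an isomorphism between the closures of the ranges, which transfers relative compactness of images of bounded sets.

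No step is a real obstacle. The one point needing care is that $\asymp$ is used in the lower direction, $c\,\rho(\tilde T)\le\rho(Tw)$, in order to transfer the Cauchy property from $Tw$ to $\tilde T$.
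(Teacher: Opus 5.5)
Your proof is correct and follows the paper's argument: transitivity by composing the comparison operators, the rank via the injective linear map $\tilde Tx\mapsto Twx$ from $\ran\tilde T$ onto $\ran(Tw)\subseteq\ran T$, and compactness via the lower bound $c\,\rho(\tilde T)\le\rho(Tw)$. Your sequential Cauchy argument is the unpacked form of the paper's factorization $Tw=V\tilde T$ through an isomorphism $V$ between the closures of the ranges, which you also mention as an alternative.
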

\begin{proof}
Reflexivity and symmetry are clear, and transitivity is proved exactly as in Proposition \ref{proposition-reduction}(b), by composing the comparison operators. Let $c\rho(\tilde T)\le\rho(Tw)\le C\rho(\tilde T)$. The map $V_0:\tilde Ty\mapsto Twy$ is a well-defined linear bijection of $\ran\tilde T$ onto $\ran Tw$ with $c\|\tilde Ty\|\le\|V_0\tilde Ty\|\le C\|\tilde Ty\|$, i.e.\ bounded and bounded below, so it extends to an isomorphism $V$ between the closures, and $Tw=V\tilde T$. In particular $\rk\tilde T=\rk Tw\le\rk T$, and symmetrically $\rk T\le\rk\tilde T$, so equivalent operators have the same rank. If $T$ is compact then so is $Tw$, hence so is $\tilde T=V^{-1}Tw$, viewed as an operator into $\ran\tilde T$; the converse is symmetric.
\end{proof}

\subsection{Noncompatibility with restriction}

The relation $\simeq$ is not compatible with restriction, as the following example shows.

\begin{example}\label{example-restriction}
Restricting a family along a bounded operator $a:H'\to H$ replaces $S$ by $a^*Sa$, that is, the quadratic form $\langle S\cdot,\cdot\rangle$ restricted to $H'$. Take $H$ separable and infinite-dimensional, $S=\mathrm{id}$ and $\tilde S\ge0$ noncompact with $\tilde Se=0$ for a unit vector $e$ (for instance the orthogonal projection onto $e^\perp$). Both are noncompact, hence $S\simeq\tilde S$ by Corollary \ref{corollary-classes}(d). With $a$ the inclusion of $\FF e$ we get $a^*Sa=1$ (rank $1$) and $a^*\tilde Sa=\langle\tilde Se,e\rangle=0$ (rank $0$), and these are not equivalent, since the rank is an invariant (Corollary \ref{corollary-classes}(a)).
\end{example}

\subsection{Relation to operator ideals}

The necessity half of Theorems \ref{theorem-all} and \ref{theorem-singular} is the Hilbert space form of the ideal property $s_n(ATB)\le\|A\|\,\|B\|\,s_n(T)$ of singular numbers \cite{GohbergKrein1969,Pietsch1980}, applied to $T_{\tilde f}=UT_fw$ with $U$ a partial isometry, which exists by Douglas' lemma \cite{Douglas1966}. What is specific to the present setting is that only one-sided pullbacks are allowed, so that the exact counting function and not only a growth rate is the invariant, that noncompact families are included (they all form one class), and that the finer invariants of Theorems \ref{theorem-inj}--\ref{theorem-inv} (kernel, closed range, dyadic multiplicities) are not singular-number invariants but depend on the class of comparison operators. The applications of Section \ref{section-applications} are the consequences of this point of view for perturbation theory and for norm sums.

\subsection{Open problems}
\label{section-problems}

The problems below are the questions left open by the classification; note that $\simeq_\mathrm{F}$ is strictly finer than $\simeq_\mathrm{usF}$ by Corollary \ref{corollary-fredholm}.

\begin{problem}\label{problem-fredholm}
Classify $\simeq_\mathrm{F}$. Theorem \ref{theorem-fredholm} gives the necessary condition that the window multiplicities agree up to an additive constant, and Proposition \ref{proposition-fredholm}(b) the complete answer on compact families and on upper semi-Fredholm families. One expects that, for families that are not upper semi-Fredholm, $f\simeq_\mathrm{F}\tilde f$ iff $f\simeq_\mathrm{usF}\tilde f$ and the multiplicity sequences are related by a bijection of bounded displacement between cofinite subsets of $\Lambda(\tilde d)$ and $\Lambda(d)$. The gap between ``additive constant in every window'' and ``finitely many exceptional points'' is the main difficulty: Hall's theorem with a deficiency $p$ is not enough, since the deficiencies in the different components of the union of windows add up.
\end{problem}

\begin{problem}\label{problem-dr}
Classify $\simeq_\mathrm{dr}$, and the relation defined by injective operators of dense range, on families without closed range. If $f\simeq_\mathrm{dr}\tilde f$ then there are isomorphisms $V,V'$ between the closures of the ranges of $S_f^{1/2}$ and $S_{\tilde f}^{1/2}$ with $V(\ran S_{\tilde f}^{1/2})\subseteq\ran S_f^{1/2}$ and $V'(\ran S_f^{1/2})\subseteq\ran S_{\tilde f}^{1/2}$ (proof of Proposition \ref{proposition-dense}(b)); this links $\simeq_\mathrm{dr}$ to the classification of operator ranges up to isomorphism. Conversely such $V,V'$ give, by Douglas' lemma, operators $w,\tilde w$ with $\rho(f)\circ w\asymp\rho(\tilde f)$ and $\rho(\tilde f)\circ\tilde w\asymp\rho(f)$, but these need not have dense range. Is $\simeq_\mathrm{dr}$ strictly coarser than $\simeq_\mathrm{inv}$ on families with nonclosed range? In particular, are the families of Example \ref{example-running} $\simeq_\mathrm{dr}$-equivalent?
\end{problem}

\begin{problem}\label{problem-lower}
Classify the relations given by lower semi-Fredholm operators (closed range of finite codimension), and by other wide subcategories of $\Hilb$ such as the operators of dense range with finite-dimensional kernel.
\end{problem}

\begin{problem}\label{problem-levels}
Compute the levels $0$ and $k\ge1$ of Proposition \ref{proposition-level-minus} for the remaining classes $\mathcal O_\mathrm{F}$, $\mathcal O_\mathrm{dr}$, $\mathcal O_\mathrm{sur}$, $\mathcal O_\mathrm{coi}$, and describe the groups $G_\mathrm{inv}(f)$ of Proposition \ref{proposition-levels}(b) for families without closed range, for instance for the families of Example \ref{example-running}.
\end{problem}

\begin{problem}\label{problem-distance}
Define the analogue of $D$ for the other classes $\mathcal O$ (taking the infimum over exact pairs in $\mathcal O$) and compute it; for $\mathcal O_\mathrm{inv}$ and $\mathcal O_\mathrm{uni}$ one expects a distance that also records the bounded displacement of Theorem \ref{theorem-inv}, and in particular does not vanish on the families of Example \ref{example-running}. Extend the classification to operators between Banach spaces (Definition \ref{definition-banach}), where the spectral decomposition is no longer available, for instance for operators into $\ell^p$ or between spaces with unconditional bases.
\end{problem}

\section*{Declaration of generative AI use}
This article was generated by a generative artificial intelligence system (Claude Sonnet 5.5, Anthropic, October 2026) from an initial draft written by the author. The system restructured and rewrote the text, proposed and formulated the definitions, statements and proofs of the new results, carried out the supporting computations, and produced the \LaTeX{} source. The research questions, the initial draft and the scientific direction were provided by the author, who directed the generation through successive instructions and revisions. The author takes full responsibility for the content, and the AI system is not listed as an author.

\section*{Ethics declaration}
Not applicable.

\section*{Funding declaration}
This research received no specific grant from any funding agency in the public, commercial, or not-for-profit sectors.

\section*{Competing interests declaration}
The author declares no competing interests.


\end{document}